\begin{document}

\newtheorem{theorem}{Theorem}[section]
\newtheorem{lemma}[theorem]{Lemma}
\newtheorem{corollary}[theorem]{Corollary}
\newtheorem{conjecture}[theorem]{Conjecture}
\newtheorem{question}[theorem]{Question}
\newtheorem{problem}[theorem]{Problem}
\newtheorem*{claim}{Claim}
\newtheorem*{criterion}{Criterion}
\newtheorem*{independent_lemma}{Lemma ?.?}
\newtheorem*{quasimorphism_combing_theorem}{Theorem ?.?}
\newtheorem*{central_limit_theorem}{Theorem ?.?}

\theoremstyle{definition}
\newtheorem{definition}[theorem]{Definition}
\newtheorem{construction}[theorem]{Construction}
\newtheorem{notation}[theorem]{Notation}
\newtheorem{convention}[theorem]{Convention}
\newtheorem*{warning}{Warning}

\theoremstyle{remark}
\newtheorem{remark}[theorem]{Remark}
\newtheorem{example}[theorem]{Example}

\def\area{\text{area}}
\def\id{\text{id}}
\def\1{{\bf{1}}}
\def\H{\mathbb H}
\def\Z{\mathbb Z}
\def\R{\mathbb R}
\def\C{\mathbb C}
\def\F{\mathcal F}
\def\CC{{\mathcal C}}
\def\BC{{\mathcal{BC}}}
\def\A{{\mathsf A}}
\def\B{{\mathsf B}}
\def\G{\Gamma}
\def\out{\textnormal{Out}}
\def\Pic{\textnormal{Pic}}
\def\Cut{\textnormal{Cut}}
\def\aut{\textnormal{Aut}}
\def\MCG{\textnormal{MCG}}
\def\til{\tilde}
\def\length{\textnormal{length}}
\def\axis{\textnormal{axis}}
\def\nul{{\varnothing}}
\def\almost{almost semisimple}
\def\cone{\textnormal{cone}}

\title{Combable functions, quasimorphisms, and the central limit theorem}
\author{Danny Calegari}
\address{Department of Mathematics \\ Caltech \\
Pasadena CA, 91125}
\email{dannyc@its.caltech.edu}
\author{Koji Fujiwara}
\address{Graduate School of Information Science \\ Tohoku University \\ Sendai, Japan}
\email{fujiwara@math.is.tohoku.ac.jp}

\date{6/13/2009, Version 0.25}

\begin{abstract}
A function on a discrete group is weakly combable if its discrete derivative with respect
to a combing can be calculated by a finite state automaton. A weakly combable function is
bicombable if it is Lipschitz in both the left and right invariant word metrics. 

Examples of bicombable functions on word-hyperbolic groups include
\begin{enumerate}
\item{homomorphisms to $\Z$}
\item{word length with respect to a finite generating set}
\item{most known explicit constructions of quasimorphisms (e.g. the Epstein-Fujiwara
counting quasimorphisms)}
\end{enumerate}

We show that bicombable functions on word-hyperbolic groups satisfy a {\em central limit theorem}:
If $\overline{\phi}_n$ is the value of $\phi$ on a random element of word length $n$
(in a certain sense), there are $E$ and $\sigma$ for which there is convergence in the sense of distribution
$n^{-1/2}(\overline{\phi}_n - nE) \to N(0,\sigma)$, where $N(0,\sigma)$ denotes the normal
distribution with standard deviation $\sigma$. As a corollary, we show that if $S_1$ and $S_2$ are
any two finite generating sets for $G$, there is an algebraic number $\lambda_{1,2}$ depending on $S_1$ and $S_2$
such that almost every word of length $n$ in the $S_1$
metric has word length $n\cdot \lambda_{1,2}$ in the $S_2$ metric, with error of size
$O(\sqrt{n})$.
\end{abstract}

\maketitle

\section{Introduction}

This paper concerns the statistical distribution of values of certain functions
on hyperbolic groups, and lies at the intersection of computer science, ergodic
theory, and geometry. Since Dehn, or more recently Gromov (e.g. \cite{Gromov_hyperbolic})
it has been standard practice to study finitely generated groups geometrically, via
large-scale properties of their Cayley graphs. Geometric spaces are probed effectively
by Lipschitz functions; in a finitely generated group one has the option of probing the geometry
of the Cayley graph by functions which are Lipschitz with respect to both the left- and 
right-invariant metrics simultaneously. One natural source of such functions are
{\em homomorphisms} to $\R$. For instance, Rivin \cite{Rivin} and
Sharp \cite{Sharp} have studied the abelianization map to $H_1$ on a free group, 
and derived strong results (also, see Picaud \cite{Picaud}). 
Unfortunately, many groups (even hyperbolic groups) admit no nontrivial
homomorphisms to $\R$ or $\Z$; however, in many cases such groups admit a rich (uncountable
dimensional!) family of {\em quasimorphisms} --- informally, homomorphisms up to bounded error.
Quasimorphisms arise in the theory of bounded cohomology, and in the context of a number
of disparate extremal problems in topology; see e.g. \cite{Calegari_scl_monograph} for an introduction.
The defining property of a quasimorphism implies that it is Lipschitz in both the left- and
right-invariant metrics on a group, and therefore one expects to learn a lot about the
geometry of a group by studying its quasimorphisms.

A class of groups for which this study is particularly fruitful is that of
{\em word-hyperbolic groups}. As Cannon \cite{Cannon_automatic} and Gromov \cite{Gromov_hyperbolic} 
pointed out, and Coornaert and Coornaert-Papadopolous
elaborated (\cite{Coornaert}, \cite{Coornaert_Papadopoulos_book}, \cite{Coornaert_Papadopoulos_coding}),
a hyperbolic group can be parameterized (i.e. {\em combed}\/) by walks on a finite directed graph, and
the (symbolic) dynamics of the (combinatorial) geodesic flow 
can be recast in terms of the theory of subshifts of finite type. ``Most'' groups (in a suitable sense)
are word-hyperbolic,
so this gives rise to a rich family of natural examples of dynamical systems with hyperbolic dynamics
(in the sense of ergodic theory). Underlying this machinery is
the fact that geodesics in a hyperbolic group can be recognized by a particularly simple class
of computer program, namely the kind which can be implemented with a {\em finite state automaton}.

Putting these two things together, one is naturally led to study {\em weakly combable functions} ---
informally, those whose (discrete) derivative can be calculated by a finite state automaton ---
and to specialize the study to the class of {\em bicombable} functions, namely those which are
Lipschitz in both the left- and right-invariant metrics.

Many naturally occurring classes of functions on word hyperbolic groups are bicombable,
including
\begin{enumerate}
\item{Homomorphisms to $\Z$}
\item{Word length with respect to any finite generating set (not necessarily symmetric)}
\item{Epstein-Fujiwara counting functions (see \cite{Epstein_Fujiwara})}
\end{enumerate}
Bicombability of a function does not depend on a particular choice of combing, or even
a generating set. Moreover, the set of all bicombable functions on a given group is
a free Abelian group (of infinite rank). Although surely interesting in their own
right, such functions are inexorably tied to the theory of quasimorphisms, in at least two ways.
Firstly, the best-studied class of quasimorphisms on hyperbolic groups, the so-called
Epstein-Fujiwara counting functions (i.e. bullet (3) above), turn out to be bicombable.
Secondly, given any bicombable function $\phi$, its antisymmetrization $\psi$ defined by the
formula $\psi(g) = \phi(g) - \phi(g^{-1})$ is a quasimorphism. A particularly simple
construction is as follows. Let $S$ be a finite generating set for $G$ (as a semigroup), 
and $S^{-1}$ the set of elements inverse to $S$ in $G$. Let $\phi_S:G \to \Z$ compute word length
with respect to the generating set $S$, and $\phi_{S^{-1}}:G \to \Z$ compute word length
with respect to the generating set $S^{-1}$. Then both $\phi_S$ and $\phi_{S^{-1}}$ are
bicombable, and their difference $\psi_S:=\phi_S - \phi_{S^{-1}}$ is a bicombable quasimorphism.
Typically $\psi_S$ will be highly nontrivial. 

\medskip

The main result in this paper is a {\em Central Limit Theorem} (Theorem~\ref{clt_bicombable}) 
for bicombable functions on
a word hyperbolic group. If $\phi$ is a bicombable function on $G$, and $\overline{\phi}_n$
denotes the value of $\phi$ on a random element in $G$ of length $n$ (with respect to some
fixed generating set $S$ and with a suitable definition of random to be made precise), there are
real numbers $E$ and $\sigma$ so that there is convergence in the sense of distribution
$$n^{-1/2}(\overline{\phi}_n - nE) \to N(0,\sigma)$$
where $N(0,\sigma)$ denotes the normal (Gaussian) distribution with standard deviation $\sigma$
(the possibility $\sigma = 0$ is allowed, in which case $N(0,\sigma)$ just denotes the
Dirac distribution supported at the origin). If $\phi$ is a quasimorphism, then symmetry
forces $E=0$, so as a corollary one concludes that for any bicombable quasimorphism $\phi$ on a word-hyperbolic
group, and for any
constant $\epsilon > 0$, there is an $N$ and a $K$ such that for all $n \ge N$, there
is a subset $G_n'$ of the set of all words $G_n$ of length $n$ so that $|G_n'|/|G_n| \ge 1-\epsilon$,
and $|\phi(g)| \le K\sqrt{n}$ for all $g \in G_n'$.

Another corollary (Corollary~\ref{genset_length_compare}) 
of independent interest to geometric group theorists, is the following. Let
$S_1$ and $S_2$ be two finite generating sets for a non-elementary word hyperbolic group $G$. Then
there is an algebraic number $\lambda_{1,2}$ such that for any $\epsilon > 0$ there is a constant $K$ 
and an $N$
so that if $G_n$ denotes the set of elements of length $n \ge N$ in the $S_1$ metric, there is a subset
$G_n'$ with $|G_n'|/|G_n| \ge 1-\epsilon$, so that for all $g \in G_n'$ there is an inequality
$$\big| |g|_{S_1} - \lambda_{1,2} |g|_{S_2} \big| \le K\cdot \sqrt{n}$$
(of course, $|g|_{S_1} = n$ in this formula).

This observation can probably be generalized considerably. For instance, if $H$ is
a quasiconvex subgroup of a hyperbolic group $G$, then word length in $G$ (in some generating
set) is bicombable as a function on $H$. It follows that for any finite generating sets $S_H$ for $H$
and $S_G$ for $G$, there is some constant $\lambda$
such that almost every word of length $n$ in $H$ in the $S_H$ metric has length $\lambda \cdot n$
in $G$ in the $S_G$ metric, with error of order $\sqrt{n}$.

\medskip

We say a word about the method of proof. In general terms, to derive a central limit theorem
for random sums in a dynamical system one needs two ingredients: independence (i.e. hyperbolicity)
and recurrence. Informally, one obtains a Gaussian distribution by adding up {\em independent}
samples from the {\em same} distribution. In our context hyperbolicity, or independence of trials,
comes from the finite size of the graph (and the finite memory of the automaton)
parameterizing elements of the group. As is well-known to the experts, the difficult point is
recurrence: a graph parameterizing a combing of a hyperbolic group may typically fail to be
recurrent. In fact, it may have many recurrent subgraphs whose adjacency matrices have largest
real eigenvalue equal to the largest real eigenvalue for the adjacency matrix of the graph as
a whole. In place of recurrence in this graph, we use recurrence {\em at infinity}, using
Coornaert's theorem that the action of $G$ on $\partial G$ is ergodic with respect to a
Patterson-Sullivan measure.

\medskip

Statistical theorems for values of (certain) quasimorphisms on certain groups have been
derived by other authors. Horsham-Sharp (\cite{Horsham_Sharp}) recently showed that 
a {\em H\"older continuous} quasimorphism (see \S~\ref{Holder_section}) 
on a free group $F$ satisfies a central limit theorem. In quite a different direction,
Sarnak \cite{Sarnak_private} showed that the values of the {\em Rademacher function} on $\text{PSL}(2,\Z)$,
sorted by geodesic length, form a Cauchy distribution. Here the Rademacher function
is a correction term to the logarithm of the Dedekind eta function, and arises in elementary
number theory. Thinking of $\text{PSL}(2,\Z)$ as a subgroup of the group of isometries of the hyperbolic
plane, every element whose trace has absolute value greater than $2$ is represented by an
isometry with an axis, with a well-defined (geodesic) translation length. The reason for the
appearance of the Cauchy distribution instead of a Gaussian one has to do with the
nonlinear relation between geodesic length and word length in $\text{PSL}(2,\Z)$, and the
noncompactness of the quotient $\H^2/\text{PSL}(2,\Z)$. Of course, the Rademacher function is
bicombable, and therefore our main theorem shows that its values have a Gaussian distribution
with respect to any word metric on the word-hyperbolic group $\text{PSL}(2,\Z)$.

\medskip

The plan of the paper is as follows. In \S~2 we introduce basic definitions and properties
of word hyperbolic groups and finite directed graphs (which we call digraphs, following the
computer scientists' convention). In \S~3 we define combable functions and establish some of
their basic properties, proving that the Epstein-Fujiwara counting quasimorphisms on hyperbolic
groups are bicombable. In \S~4 we prove the central limit theorem and derive the main corollaries
of interest. Finally in \S~5 we briefly discuss the results of Horsham-Sharp.

\section{Groups and digraphs}\label{groups_automata_section}

\subsection{Hyperbolic groups}

\begin{definition}
A path metric space is {\em $\delta$-hyperbolic} for some $\delta \ge 0$ if
for every geodesic triangle $abc$ every point in the edge $ab$ is contained
in the union of the $\delta$-neighborhoods of the other two edges:
$$ab \subset N_\delta(bc) \cup N_\delta(ca)$$
A group $G$ with a finite generating set $S$ is {\em $\delta$-hyperbolic}
if the Cayley graph $C_S(G)$ is $\delta$-hyperbolic as a path metric space.

$G$ is {\em word-hyperbolic} (or simply {\em hyperbolic})
if there is a $\delta \ge 0$ and a finite generating
set $S$ for which $C_S(G)$ is $\delta$-hyperbolic.
\end{definition}

A word-hyperbolic group is {\em elementary} if it contains a finite
index cyclic subgroup. If $G$ is nonelementary, it contains many quasi-isometrically
embedded nonabelian free groups of arbitrary rank, by the ping-pong lemma.

\medskip

We assume the reader is familiar with basic elements of coarse geometry:
$(k,\epsilon)$-quasi-isometries, quasigeodesics, etc. 
We summarize some of the main properties of $\delta$-hyperbolic spaces below:

\begin{theorem}[Gromov; basic properties of $\delta$-hyperbolic spaces]\label{delta_hyperbolic_space_facts}
Let $X$ be a $\delta$-hyperbolic path metric space.
\begin{enumerate}
\item{{\bf Morse Lemma}. For every $k,\epsilon$
there is a universal constant $C(\delta,k,\epsilon)$ such that every $(k,\epsilon)$-quasigeodesic
segment with endpoints $p,q \in X$ lies in the $C$-neigh\-bor\-hood of any geodesic
joining $p$ to $q$.}\label{morse_bullet}
\item{{\bf Quasigeodesity is local}. For
every $k,\epsilon$ there is a universal constant $C(\delta,k,\epsilon)$ such that
every map $\phi:\R \to X$ which restricts on each segment of length $C$ to a
$(k,\epsilon)$-quasigeodesic is (globally) $(2k,2\epsilon)$-quasigeodesic.}
\item{{\bf Ideal boundary}. There is an {\em ideal boundary} $\partial X$ functorially
associated to $X$, whose points consist of quasigeodesic rays up to the equivalence
relation of being a finite Hausdorff distance apart, which is metrizable. If $X$
is proper, $\partial X$ is compact, and any quasi-isometric embedding $X \to Y$
between hyperbolic spaces induces a continuous map $\partial X \to \partial Y$.}
\end{enumerate}
\end{theorem}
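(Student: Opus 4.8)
The plan is to treat all three items as standard consequences of thinness of triangles, with the Morse Lemma (item (1)) carrying the weight and items (2) and (3) deduced from it. The single technical device underneath everything is the elementary estimate that in a $\delta$-hyperbolic space, if $\sigma$ is any rectifiable path from $p$ to $q$ then every point of a geodesic $[p,q]$ lies within $\delta(1+\log_2\length(\sigma))$ of $\mathrm{im}(\sigma)$; one proves this by repeatedly bisecting $\sigma$ at its arc-length midpoint, applying the thin-triangle condition to the geodesic triangle on $p$, $q$, and the current midpoint, and iterating about $\log_2\length(\sigma)$ times until the subdivision scale drops below $1$. I would establish this first.

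For item (1), I would first replace a general $(k,\epsilon)$-quasigeodesic by a continuous, rectifiable one with controlled length on subsegments, at the cost of worsening constants --- the usual ``taming'' step, interpolating geodesically between integer parameter values. So let $c$ be a continuous $(k,\epsilon)$-quasigeodesic from $p$ to $q$ and $[p,q]$ a geodesic. Step (a): bound the geodesic by the quasigeodesic. Let $D_0=\sup_{x\in[p,q]}d(x,\mathrm{im}(c))$, pick $x_0$ nearly attaining it, pick $y,z\in[p,q]$ at distance $2D_0$ from $x_0$ on either side (or take the endpoints), and project $y,z$ to points $y',z'$ of $\mathrm{im}(c)$; apply the subdivision estimate to the detour path $[y,y']\cup c|_{[y',z']}\cup[z',z]$, whose length is $O(kD_0+\epsilon)$. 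Since $x_0$ must lie close to this path, yet its distance to every portion of the path is $\gtrsim D_0$, one gets $D_0\lesssim\delta\log(kD_0+\epsilon)$, forcing $D_0\le D_0(\delta,k,\epsilon)$. Step (b): bound the quasigeodesic by the geodesic. If some $c(t)$ were far from $[p,q]$, let $[\alpha,\beta]\ni t$ be the maximal parameter interval on which $c$ stays at distance $>D_0$ from $[p,q]$; its endpoints $c(\alpha),c(\beta)$ are within $D_0$ of $[p,q]$, and using step (a) the other way one bounds their distance apart along $[p,q]$, hence $|\beta-\alpha|$, hence $\length(c|_{[\alpha,\beta]})$, hence $d(c(t),[p,q])$. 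This bootstrap, with its length bound and its apparent circularity, is the step I expect to be the main obstacle and the one where the constants must be tracked with care.

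For item (2), given $\phi:\R\to X$ locally $(k,\epsilon)$-quasigeodesic on windows of length $C$, I would sample $x_i=\phi(iC/3)$, join consecutive $x_i$ by geodesics to form a path $\Sigma$, and observe that on each window $\phi$ and $\Sigma$ fellow-travel a genuine geodesic within a constant depending only on $\delta,k,\epsilon$ by item (1); the thin-quadrilateral condition then shows that nearest-point projection to $\Sigma$ is coarsely monotone, so the local geodesics make definite forward progress once $C$ exceeds a threshold $C(\delta,k,\epsilon)$, and summing the progress over windows gives that $\Sigma$, and hence $\phi$, is globally $(2k,2\epsilon)$-quasigeodesic.

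For item (3), I would define $\partial X$ as quasigeodesic rays based at a fixed $x_0$ modulo finite Hausdorff distance; item (1) shows this relation is independent of $x_0$, equals asymptoticity, and (when $X$ is proper) is represented by an honest geodesic ray, obtained from the geodesics $[x_0,y_n]$ with $y_n\to\infty$ by Arzel\`a--Ascoli --- which also gives compactness of $\partial X$ in the topology of uniform convergence on compacta. For metrizability I would pass to a visual metric: set $(\xi|\eta)_{x_0}$ equal to the $\liminf$ of Gromov products along representatives, verify $(\xi|\zeta)\ge\min\{(\xi|\eta),(\eta|\zeta)\}-\delta'$, and then invoke the standard chaining lemma that, for $a>0$ small relative to $\delta'$, produces an honest metric bi-Lipschitz to $e^{-a(\xi|\eta)_{x_0}}$ --- the one further delicate point. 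Finally, a $(k,\epsilon)$-quasi-isometric embedding $f:X\to Y$ carries quasigeodesic rays to quasigeodesic rays and, by item (1), asymptotic rays to asymptotic rays, hence induces $\partial f:\partial X\to\partial Y$; the quasi-isometry inequality gives $(f\xi|f\eta)_{f(x_0)}\gtrsim\frac1k(\xi|\eta)_{x_0}$ up to an additive constant, which is exactly the estimate needed for continuity of $\partial f$ in the visual metrics.
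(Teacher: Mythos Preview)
Your sketch is a correct outline of the standard textbook proof of these facts (essentially the argument in Bridson--Haefliger or Ghys--de la Harpe), but you should note that the paper itself does not prove this theorem at all: it is stated as background and the reader is simply referred to \cite{Gromov_hyperbolic} for proofs. So there is nothing to compare against; the paper's ``proof'' is a citation.

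A few minor remarks on your sketch, since you asked for review. In item~(1), step~(b), the argument is fine but your phrase ``using step~(a) the other way'' is slightly misleading: what you actually use is that every point of $[p,q]$ between the projections of $c(\alpha)$ and $c(\beta)$ lies within $D_0$ of $\mathrm{im}(c)$, hence within $D_0$ of $c(\alpha)$ or $c(\beta)$ (since $c((\alpha,\beta))$ is far from $[p,q]$), which bounds the geodesic distance between those projections and thus $d(c(\alpha),c(\beta))$. In item~(2) your outline is thinner than for the other items; the usual cleanest version shows by induction on $n$ that the piecewise-geodesic path on $x_0,\dots,x_n$ stays within a fixed constant of the geodesic $[x_0,x_n]$, using thin triangles and the Morse bound from item~(1) at each step --- your ``coarsely monotone projection'' formulation is equivalent but would need a line or two more to make the induction explicit. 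Item~(3) is fine as stated.
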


For proofs and a more substantial discussion, see \cite{Gromov_hyperbolic}.

\subsection{Regular languages}

We give a brief overview of the theory of regular languages and automata. There are many
conflicting but equivalent definitions in the literature; we use a parsimonious
formulation which is adequate for this paper.
For a substantial introduction to the theory of regular languages see \cite{ECHLPT}.

\begin{definition}
An {\em alphabet} is a finite set whose elements are called {\em letters}. 
A {\em word} (or sometimes {\em string}\/)
on an alphabet is a finite ordered sequence of letters. A {\em language} is a subset
of the set of all words on a fixed alphabet. A language is {\em prefix closed} if
every prefix of an element of the language is also in the language.
\end{definition}

If $S$ is an alphabet, the set of all words on $S$ is denoted $S^*$, and a language is
a subset of $S^*$. 

\begin{convention}\label{word_convention}
Throughout this paper, if $w$ denotes a word in $S^*$, then $w_i$ will denote the prefix of length $i$
and $w(i)$ will denote the $i$th letter. The length of $w$ is denoted $|w|$.
Hence (for example), $w_0$ denotes the empty word, and $|w_i|=i$.
\end{convention}

\begin{definition}
A {\em finite state automaton} (or just automaton for short) on a fixed alphabet $S$
is a finite directed graph $\Gamma$ with a distinguished initial vertex $v_1$
(the {\em input state}\/), and a choice of a distinguished subset $Y$ of the vertices
of $\Gamma$ (the {\em accept states}\/), whose oriented edges are labeled by letters of $S$
in such a way that at each vertex there is at most one outgoing edge with any given label.
\end{definition}
 
The vertices are called the {\em states} of the automaton. A
word $w$ in the alphabet determines a directed path $\gamma$ in an automaton,
defined as follows. The path $\gamma$ starts at the initial vertex at time $0$,
and proceeds by reading the letters of $w$ one at a time, and moving along the
directed edge with the corresponding label if one exists, and halts if not.
If the automaton reads to the end of $w$ without halting, the last vertex of $\gamma$
is the {\em final state}, and the word is {\em accepted} if the final state of
$\gamma$ is an accept state, and {\em rejected} otherwise. In this paper, by convention,
if an automaton halts before reaching the end of a word, the word is {\em rejected}.

\begin{definition}
A language $L \subset S^*$ is {\em regular} if it is exactly the set of words accepted
by some finite state automaton. 
\end{definition}

A regular language $L$ is prefix-closed if and only if it is accepted by some finite state
automaton for which every vertex is an accept state. In this paper, we are only concerned 
with prefix-closed regular languages. Hence a word $w$ in $S^*$ is
accepted if and only if it corresponds to a directed path of length $|w|$.

\begin{convention}\label{path_convention}
Throughout this paper, if
$\gamma$ denotes a directed path in a graph, then $\gamma_i$ will denote the initial subpath of
length $i$, and $\gamma(i)$ will denote the $i$th vertex in the path {\em after the initial vertex}. The
length of $\gamma$ is denoted $|\gamma|$. Hence (for example) $\gamma(0)$ denotes the initial vertex.
\end{convention}

If $\Gamma$ is the underlying directed graph of the automaton,
we say that $\Gamma$ {\em parameterizes} $L$. If $L$ is prefix closed and parameterized
by $\Gamma$, there is a natural bijection between elements of $L$ and paths in $\Gamma$
starting at the initial vertex.

\begin{remark}
Some authors by convention add a terminal ``fail'' state to an automaton, and
add directed labeled edges to this fail state so that at every vertex there is
a full set of outgoing directed labeled edges, one for each element of the alphabet. 
In this paper, automata will only be fed accepted
words, so this convention is irrelevant.
\end{remark}

An automaton is like a computer with finite resources running a specific
program. The language in which the program is written
only allows {\em bounded loops} --- i.e. loops of the form ``for $i=1$ to $10$ do'',
where the parameters of the loop are explicitly spelled out --- in order to
certify that the computer will not run out of resources on execution of the program.

Describing an automaton in terms of vertices and edges is like describing a
program in terms of machine code. It is more transparent, and the result is
both more human readable and more easily checkable,
to describe instead (in more prosaic terms) the program that the automaton carries out.

\subsection{Directed graphs and components}

Now and in the sequel we refer to directed graphs by the computer scientists' term ``digraphs".
A {\em pointed digraph} is a digraph with a choice of an initial vertex.

\begin{definition}
Let $\Gamma$ be a digraph. A vertex $v$ is {\em accessible} from a vertex $u$ if there
is a directed path in $\Gamma$ from $u$ to $v$. A {\em component} is a maximal subgraph
such that for any two vertices $u,v$ in the subgraph (not necessarily distinct)
the state $v$ is accessible from $u$.
\end{definition}

\begin{remark}
A subgraph in which every vertex is accessible from every other is sometimes called
{\em recurrent}, so a component is just a maximal recurrent subgraph. We use both terms
interchangeably in the sequel.
\end{remark}

Different components in a digraph are disjoint. If $\Gamma$ is a pointed digraph,
associated to $\Gamma$ is a digraph $C(\Gamma)$ whose vertices are components of $\Gamma$,
together with an edge from one component to the other if there is a directed path
in $\Gamma$ between the corresponding components which does not pass through any other
component. Note that it is entirely possible that $C(\Gamma)$ is empty.

We can enhance $C(\Gamma)$ to a pointed digraph as follows.
If the initial vertex $v_1$ of $\Gamma$ is contained in a component $C$, the corresponding
vertex $C$ of $C(\Gamma)$ becomes the initial vertex. Otherwise add an initial vertex to
$C(\Gamma)$, and join it by a path to each component $C$ that can be reached in $\Gamma$
by a path from the initial vertex which does not pass through any other component.

\begin{lemma}
There are no oriented loops in $C(\Gamma)$.
\end{lemma}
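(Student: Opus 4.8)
The plan is to argue by contradiction. Suppose $C(\Gamma)$ contains an oriented loop. By construction an edge of $C(\Gamma)$ joins two \emph{distinct} components, so such a loop is a directed cycle $C_1 \to C_2 \to \cdots \to C_k \to C_1$ of pairwise distinct components with $k \ge 2$. I will show that this forces the subgraph of $\Gamma$ spanned by the vertices of $C_1$ and $C_2$ to be recurrent, which contradicts the maximality of the component $C_1$.

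By the definition of the edges of $C(\Gamma)$, for each index $i$ (read mod $k$) there is a directed path in $\Gamma$ from a vertex of $C_i$ to a vertex of $C_{i+1}$. In particular the edge $C_1 \to C_2$ provides a directed path in $\Gamma$ from some $a \in C_1$ to some $b \in C_2$; since $C_1$ and $C_2$ are recurrent, prepending a walk inside $C_1$ and appending a walk inside $C_2$ shows that every vertex of $C_2$ is accessible from every vertex of $C_1$. Conversely, concatenating the paths associated to the edges $C_2 \to C_3,\ C_3 \to C_4,\ \dots,\ C_k \to C_1$ --- and, at each intermediate component $C_m$, inserting a walk (available because $C_m$ is recurrent) from the vertex where we arrive to the vertex where the next connecting path departs --- produces a single directed path in $\Gamma$ from a vertex of $C_2$ to a vertex of $C_1$. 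Using recurrence of $C_1$ and $C_2$ again, every vertex of $C_1$ is accessible from every vertex of $C_2$. Hence any two vertices of the set $V(C_1) \cup V(C_2)$ are mutually accessible in $\Gamma$.

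Therefore the subgraph of $\Gamma$ spanned by $V(C_1) \cup V(C_2)$ is recurrent, and it strictly contains $C_1$, since $C_2$ is nonempty and, distinct components being disjoint, is disjoint from $C_1$. This contradicts the maximality of $C_1$ as a recurrent subgraph, so $C(\Gamma)$ has no oriented loops.

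I do not expect a genuine obstacle here; the argument is elementary. The one point that deserves a little care is the concatenation step --- checking that going once around the cycle really does yield a single directed path in $\Gamma$ running from a vertex of $C_2$ back to a vertex of $C_1$, by splicing the connecting paths together with recurrent detours through the intermediate components --- and this is precisely the place where the existence of the assumed oriented loop is used.
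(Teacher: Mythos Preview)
Your proof is correct and follows essentially the same approach as the paper: both argue by contradiction, observing that an oriented loop in $C(\Gamma)$ through distinct components $C$ and $C'$ yields mutual accessibility in $\Gamma$ between all vertices of $C$ and $C'$, contradicting the maximality of $C$. Your version is simply more explicit about the concatenation step and the use of recurrence within each intermediate component.
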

\begin{proof}
If there is a path from $C$ to itself in $C(\Gamma)$ passing through $C'$ then there
is a loop in $\Gamma$ from some vertex in $C$ to itself passing through $C'$. Hence
every vertex of $C'$ is accessible from every vertex of $C$ and conversely, contrary
to the fact that $C$ and $C'$ are maximal.
\end{proof}

\subsection{Combings}

Let $G$ be a group and $S$ a finite subset which generates $G$ as a semigroup. There is an
evaluation map $S^* \to G$ taking a word in the alphabet $S$ to the corresponding element in
$G$. We denote this map by $w \to \overline{w}$. Note that a word $w \in S^*$ can equally
be thought of as a directed path in the Cayley graph $C_S(G)$ from $\id$ to $\overline{w}$.
Note further if $S$ is not symmetric (i.e. $S \ne S^{-1}$), 
the Cayley graph $C_S(G)$ should really be thought of as a digraph.

\begin{convention}\label{group_convention}
By Convention~\ref{word_convention}, $\overline{w_i}$ denotes the element in $G$ 
corresponding to the prefix $w_i$ of $w$. Hence (for example), $\overline{w_0}=\id$ for any $w$.
By Convention~\ref{path_convention}, if $\gamma$ denotes the corresponding path in $C_S(G)$ from $\id$ to
$\overline{w}$, then (by abuse of notation) we write $\gamma(i)=\overline{w_i}$ as elements of $G$.
\end{convention}

\begin{definition}
Let $G$ be generated as a semigroup by a finite subset $S$. A {\em combing of $G$
with respect to $S$} is a prefix-closed regular language $L \subset S^*$ for which the
evaluation map is a bijection $L \to G$ and such that every element of $L$ is a geodesic.
\end{definition}

Geometrically, let $\Gamma$ be a digraph parameterizing $L$. Let $\til{\Gamma}$ be the
universal cover of $\Gamma$, let $\til{v_1} \in \til{\Gamma}$ be a lift of the initial vertex,
and let $T \subset \til{\Gamma}$ be the subgraph consisting
of all points accessible from $\til{v_1}$. There is a natural
map $T \to C_S(G)$ taking $\til{v_1}$ to $\id$ and directed paths to directed paths. The statement that
$L$ is a combing with respect to $S$ corresponds to the geometric statement that
the image of $T$ is a spanning subtree of $C_S(G)$, and that directed paths in $T$ are taken to
geodesics in $C_S(G)$.

\medskip

Hyperbolic groups admit combings with respect to any finite generating set. In fact,
let $G$ be hyperbolic and let $S$ be a fixed finite generating set. Choose a total order $\prec$
on the elements of $S$, and by abuse of notation,
let $\prec$ denote the induced lexicographic order on $S^*$. That is,
$w \prec w'$ for words $w,w'$ if there are expressions $w = xsy, w' = xs'y'$ for $s,s' \in S$
and $x,y,y' \in S^*$ where $s \prec s'$.

Let $L \subset S^*$ denote the language of {\em lexicographically first geodesics} in $G$.
That is, for each $g \in G$, a word $w \in S^*$ with $\overline{w} = g$ is in $L$ if and only
if $|w| = |g|$, and $w \prec w'$ for all other $w' \in S^*$ with these properties.
It is clear that $L$ is prefix-closed and bijective with $G$.

\begin{theorem}[Cannon \cite{Cannon_automatic}, \cite{ECHLPT}]
Let $G$ be a hyperbolic group, and $S$ a finite generating set. The language of
lexicographically first geodesics is regular (and therefore defines a combing of $G$
with respect to $S$).
\end{theorem}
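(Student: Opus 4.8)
The statement to prove is Cannon's theorem: the language $L$ of lexicographically first geodesics in a hyperbolic group $G$ with finite generating set $S$ is regular. The plan is to exhibit a finite state automaton recognizing $L$, or equivalently, to show that the Nerode equivalence relation on prefixes of words in $L$ has finitely many classes. The key technical input is the existence of a \emph{universal constant $k$} (depending only on $\delta$ and $|S|$) such that whether an extension $ws$ of a geodesic word $w$ (with $s \in S$) is still a lexicographically first geodesic is determined entirely by a bounded-radius ball around the endpoint $\overline{w}$ in the Cayley graph. So the first step is to make precise what ``the automaton needs to remember'' is: I claim it suffices to remember, up to the action of $G$, the isometry type of the ball $B_k(\overline{w})$ in $C_S(G)$ together with the marking recording which geodesic words of the relevant short lengths end at each vertex of that ball. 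There are only finitely many such marked balls, so this gives a finite state set.

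Second, I would prove the crucial geometric lemma underlying fellow-traveling: in a $\delta$-hyperbolic Cayley graph, if $w, w'$ are two geodesic words representing the same element $g$, then the paths they trace in $C_S(G)$ stay within bounded (uniform, depending only on $\delta$) Hausdorff distance of each other — indeed they $2\delta$-fellow-travel synchronously since they are geodesics with common endpoints. More importantly, I need the ``local-to-global'' criterion for lex-first geodesics: $w$ is a lexicographically first geodesic representative of $\overline{w}$ if and only if \emph{no prefix} $w_i$ can be replaced by a lexicographically earlier geodesic word to the same point $\overline{w_i}$ without increasing length, and this can be detected locally. The point is that if $w$ failed to be lex-first, the failure would be ``visible'' via a short detour: there is a bound (from $\delta$-hyperbolicity and finiteness of $S$) on the length of the shortest word $w'$ with $\overline{w'} = \overline{w_i}$, $|w'| = |w_i|$, $w' \prec w_i$, witnessing non-minimality — this is because two distinct geodesics to the same point diverge and reconverge within a uniformly bounded window.

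Third, using the lemma, I would define the Nerode classes explicitly: declare two prefixes $u, u' \in L$ equivalent if the marked balls $B_k(\overline{u})$ and $B_k(\overline{u'})$ (marked as above, and translated so the endpoint sits at $\id$) are isomorphic as marked digraphs. One then checks (a) there are finitely many classes, since there are finitely many isomorphism types of such finite marked balls; (b) the relation is right-invariant, i.e. if $u \sim u'$ then for every $s \in S$, $us \in L \iff u's \in L$, and in that case $us \sim u's$ — this is exactly where the locality lemma is used: whether appending $s$ keeps us in $L$ (both still geodesic, still lex-first among geodesics of this length) depends only on the marked ball; and (c) membership $u \in L$ is itself determined by the class. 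By the Myhill–Nerode theorem this yields regularity of $L$, and since $L$ is prefix-closed and evaluates bijectively to $G$ with every element a geodesic, $L$ is a combing.

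The main obstacle is step two: pinning down the uniform constant $k$ and proving the locality criterion for the \emph{lexicographic} condition, not just for being a geodesic. Being a geodesic is straightforwardly local (a word is geodesic iff every subword of length $\le$ some constant is geodesic, by the local-to-global principle for quasigeodesics, Theorem~\ref{delta_hyperbolic_space_facts}(2)). The lexicographic-minimality condition is subtler: a priori one might worry that $w$ is beaten by some competitor $w'$ that diverges from it early and only rejoins near the end, so that no bounded window detects the defect. The resolution is that among all geodesics from $\id$ to $\overline{w}$ there is a lex-first one, call it $w^*$; by $\delta$-hyperbolicity $w$ and $w^*$ $2\delta$-fellow-travel, so at the \emph{first} index $i$ where $w(i) \ne w^*(i)$ the two paths have just separated from a common vertex $\overline{w_{i-1}} = \overline{w^*_{i-1}}$, and $w^*$ offers a lexicographically earlier continuation — and crucially, one can take $w^*$ to agree with $w$ after it rejoins, so the discrepancy is confined to a single bounded detour whose length is controlled by $2\delta$-fellow-traveling. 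Thus the lex-first defect, if any, is always witnessed inside a ball of uniformly bounded radius about some $\overline{w_i}$, which is precisely what makes the finite-state recognition go through. I would also need to handle bookkeeping: the automaton must track not just the ball but enough of the lex-ordering data on short geodesics within it, but this is still finite.
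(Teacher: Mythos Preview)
The paper does not prove this theorem; it is stated with citations to Cannon and \cite{ECHLPT} and used as a black box. Your overall strategy---a Myhill--Nerode argument via finitely many ``marked ball'' types at the endpoint---is exactly Cannon's cone-type approach and is the standard one, so in that sense your route is the expected route.

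There is, however, a genuine gap in your justification of the lexicographic half. You claim that if $w$ is not lex-first, a lex-earlier competitor $w^*$ can be taken to ``agree with $w$ after it rejoins,'' so that the discrepancy is ``confined to a single bounded detour.'' This is false as stated: two geodesics between the same endpoints in a $\delta$-hyperbolic space $2\delta$-fellow-travel synchronously, but need not coincide at any interior vertex, so $w^*$ may differ from $w$ at every index $0<i<|w|$ and no bounded subword of $w$ alone witnesses the failure. (Your appeal to Theorem~\ref{delta_hyperbolic_space_facts}(2) for the geodesic condition has the analogous defect: local geodesicity yields only global \emph{quasi}geodesicity, not geodesicity.) What actually makes the argument go through is not a bounded detour but the fellow-traveler property used \emph{dynamically}: after reading a prefix $w_i$, the automaton records, for each vertex $p$ in the $2\delta$-ball about $\overline{w_i}$, whether there exists a geodesic of length $i$ from $\id$ to $p$ that is lexicographically $\prec w_i$. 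This is finite data; it updates upon reading $w(i{+}1)$ using only the current data and the local ball, precisely because any competing geodesic reaching a point near $\overline{w_{i+1}}$ must, by $2\delta$-fellow-traveling, have passed through the $2\delta$-ball about $\overline{w_i}$ one step earlier. The word $w$ is rejected exactly when the vertex $\overline{w}$ itself acquires a ``yes.'' This is the ``falsification by fellow traveler'' mechanism of \cite{ECHLPT}, and it is what should replace your bounded-detour claim; once you make that substitution, your marked-ball state description becomes correct and the rest of your outline goes through.
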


\begin{remark}
Note that the choice of a digraph $\Gamma$ parameterizing a combing $L$ is {\em not}
part of the data of a combing. A given regular language is typically parameterized
by infinitely many different digraphs.
\end{remark}

\begin{warning}
Many different definitions of combing exist in the literature. They are often, but not always,
assumed to be bijective, and sometimes to be quasigeodesic. See \cite{Rees_hairdressing}
for a survey and some discussion.
\end{warning}

\section{Combable functions}\label{combable_functions_section}

\subsection{Left and right invariant metrics}

Let $G$ be a group with finite generating set $S$ (as a semigroup). There are two
natural metrics on $G$ associated to $S$ --- a {\em left} invariant metric $d_L$
which is just the path metric in the Cayley graph $C_S(G)$, and a {\em right} invariant
metric $d_R$ where $d_R(a,b) = d_L(a^{-1},b^{-1})$. If $|\cdot|$ denotes word length with
respect to $S$, then
$$d_L(a,b) = |a^{-1}b| \text{ and } d_R(a,b) = |ab^{-1}|$$

\begin{remark}
If $S$ is not equal to $S^{-1}$ then this is not strictly speaking a metric,
since it is not symmetric in its arguments. 
\end{remark}

\begin{lemma}\label{left_and_right_lemma}
A function $f:G \to \R$ is Lipschitz for the $d_L$ metric if and only if there is a constant
$C$ so that for all $a \in G$ and $s \in S$ there is an inequality
$$|f(as) - f(a)| \le C$$
Similarly, $f$ is Lipschitz for the $d_R$ metric if
$$|f(sa) - f(a)| \le C$$
\end{lemma}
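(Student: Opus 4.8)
The plan is to prove each implication by a one–dimensional telescoping estimate along a geodesic word in $S^*$. I will give the $d_L$ case in full; the $d_R$ case is the mirror image, with right multiplication by generators replaced by left multiplication.

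First suppose $f$ is $K$-Lipschitz for $d_L$. For any $a \in G$ and $s \in S$ we have $d_L(a, as) = |a^{-1}(as)| = |s| \le 1$, hence $|f(as) - f(a)| \le K\, d_L(a, as) \le K$, and we may take $C = K$. Conversely, suppose there is a constant $C$ with $|f(as) - f(a)| \le C$ for all $a \in G$ and $s \in S$. Given $a, b \in G$, set $n = d_L(a,b) = |a^{-1}b|$ and choose a geodesic word $s_1 s_2 \cdots s_n \in S^*$, with each $s_i \in S$, evaluating to $a^{-1}b$; such a word exists because $S$ generates $G$ as a semigroup and $n$ is by definition the length of a shortest word representing $a^{-1}b$. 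Put $g_i = a\, s_1 s_2 \cdots s_i$, so that $g_0 = a$, $g_n = b$, and $g_i = g_{i-1} s_i$ for each $i$. The hypothesis gives $|f(g_i) - f(g_{i-1})| \le C$, so by the triangle inequality
\[
|f(b) - f(a)| \le \sum_{i=1}^{n} |f(g_i) - f(g_{i-1})| \le nC = C\cdot d_L(a,b),
\]
and $f$ is $C$-Lipschitz for $d_L$. For the $d_R$ statement one uses $d_R(a,b) = |ab^{-1}|$: if $ab^{-1} = s_1 \cdots s_n$ is a geodesic word then $a = s_1 \cdots s_n b$, and telescoping $f$ along the chain $b,\ s_n b,\ s_{n-1}s_n b,\ \ldots,\ s_1 \cdots s_n b = a$ (consecutive terms differing by left multiplication by one element of $S$) together with $|f(sa) - f(a)| \le C$ yields $|f(a) - f(b)| \le C\cdot d_R(a,b)$; the reverse implication is identical to the $d_L$ case since $d_R(sa, a) = |s| \le 1$.

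There is no genuine obstacle in this lemma; it is a routine observation. The only point meriting a word of care is that when $S \ne S^{-1}$ the expressions $d_L$ and $d_R$ are not symmetric, so one must be slightly attentive to which direction the geodesic word runs — a positive word in $S$ representing $a^{-1}b$ for the $d_L$ estimate, respectively $ab^{-1}$ for the $d_R$ estimate — but this is precisely what word length with respect to the semigroup generating set $S$ provides, so no real difficulty arises.
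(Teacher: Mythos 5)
Your proof is correct and is exactly the standard telescoping argument that the paper compresses into ``these properties follow immediately from the definitions.'' Your extra remark about the geodesic word running as a positive word in $S$ (representing $a^{-1}b$ for $d_L$ and $ab^{-1}$ for $d_R$ when $S \ne S^{-1}$) is the right point to flag, and nothing further is needed.
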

\begin{proof}
These properties follow immediately from the definitions.
\end{proof}

\begin{remark}
Note the curious intertwining of left and right: a function is Lipschitz in the
{\em left} invariant metric if and only if it changes by a bounded amount under multiplication
on the {\em right} by a generator, and {\it vice versa}.
\end{remark}

\subsection{Combable functions}

Suppose $G$ is a group with finite generating set $S$, and let $L \subset S^*$ be a combing.
Let $\Gamma$ be a digraph parameterizing $L$. There are natural bijections
$$\text{ paths in } \Gamma \text{ starting at the initial vertex }\longleftrightarrow L \longleftrightarrow G$$
If $w \in L$, and $\gamma$ denotes the corresponding path in $\Gamma$, then recall
that Convention~\ref{path_convention} says that
$\gamma(i)$ is the vertex in $\Gamma$ visited after reading the first $i$ letters of $w$.

\begin{definition}\label{combable_function_definition}
Let $G$ be a group with finite symmetric generating set $S$,
and let $L \subset S^*$ be a combing. A function $\phi:G \to \Z$ 
is {\em weakly combable with respect to $S$, $L$} (or {\em weakly combable} for short)
if there is a digraph $\Gamma$ parameterizing $L$, and a function $d\phi$ from the
vertices of $\Gamma$ to $\Z$ such that for any $w \in L$ there is an equality
$$\phi(\overline{w}) = \sum_{i=0}^{|w|} d\phi(\gamma(i))$$
where $\gamma$ is the directed path in $\Gamma$ corresponding to $w$.

The function $\phi$ is {\em combable} if it is weakly combable and Lipschitz in the
$d_L$ metric, and it is {\em bicombable} if it is weakly combable and Lipschitz in both
the $d_L$ and the $d_R$ metrics.
\end{definition}

In words, a function $\phi$ is weakly combable 
if its derivative along elements in a combing can be calculated by an automaton.

\begin{example}\label{word_length_bicombable}
Word length (with respect to a fixed finite generating set $S$, symmetric or not) is
bicombable. In fact, if $L$ is any combing for $S$, and
$\Gamma$ is any digraph parameterizing $L$, the function $d\phi$ taking the value $1$ on
every (noninitial) vertex of $\Gamma$ satisfies the desired properties.
\end{example}

\begin{example}
Let $F$ be the free group with generators $a,b$. 
Let $S = \lbrace a,b,a^{-1},b^{-1} \rbrace$, and let $L$ be the
language of reduced words in the generating set. The function
$$\phi(w) = \begin{cases} 
|w|  & \text{ if } w \text{ begins with } a \\
0 & \text{ otherwise }
\end{cases}$$
is combable but not bicombable.
\end{example}

The definition of weakly combable depends on a generating set and a combing.

\begin{example}\label{big_variation}
Let $G = \Z \times \Z/2\Z$
where $\Z$ is generated by $a$ and $\Z/2\Z$ is generated by $b$. Let
$S = a,a^{-1},b$ and let $L$ be the combing consisting of words of the form
$a^n$ and $ba^n$ for different $n$. Let $S' = b, (ab), (ab)^{-1}$ and let $L'$
be the combing consisting of words of the form $(ab)^n$ and $b(ab)^n$ (each
bracketed term denotes a single ``letter'' of $S'$). Let $\phi:G \to \Z$ be
the function defined by $\phi(a^n) = n$ and $\phi(ba^n)=0$. Then $\phi$ is
weakly combable with respect to $L$, but not with respect to $L'$ since the
derivative $d\phi$ is unbounded along words of $L'$.
\end{example}

However, the property of being (bi-)combable does {\em not} depend on a generating set,
or a choice of combing.

\begin{lemma}\label{combing_independent_lemma}
Let $G$ be a word-hyperbolic group. Suppose $S,S'$ are two finite symmetric
generating sets, and $L,L'$ bijective geodesic combings in $S^*$ and $(S')^*$
respectively. Then the sets of functions which are (bi-)combable with respect to $L$
and with respect to $L'$ are equal.
\end{lemma}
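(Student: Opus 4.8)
The plan is to show that a function combable with respect to one combing is combable with respect to any other by a "re-combing" argument: feed a word from $L'$ through a bounded-memory device that keeps track of a short $L$-expression for the current group element, and reconstruct $\phi$ from $d\phi$ along that expression. The key geometric input is hyperbolicity together with the $d_L$- and $d_R$-Lipschitz hypotheses, which together bound how much local information about the $L$-form of an element can change under right-multiplication by a generator.

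Here is the sequence of steps. First, reduce to the case $S=S'$: if $S\ne S'$, write each generator of $S'$ as a bounded-length word in $S$, so any combing $L'\subset (S')^*$ becomes a geodesic-up-to-bounded-multiplicative-error quasigeodesic combing in $S^*$ parameterized by a digraph obtained by subdividing. (One must note that $L'$-paths, read in $S^*$, are $(k,\epsilon)$-quasigeodesics with uniform constants, since word length changes by a bounded factor; combability of $\phi$, being a Lipschitz-plus-automaton condition, is insensitive to this change.) Second, with $S=S'$, let $\Gamma$ parameterize $L$ with vertex function $d\phi$, and let $\Gamma'$ parameterize $L'$. For a word $w'\in L'$ with corresponding path $\gamma'$ in $\Gamma'$ and group element $g_i=\gamma'(i)=\overline{w'_i}$, consider the geodesic $L$-representatives $\alpha^{(i)}\in L$ of $g_i$. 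By the Morse Lemma (Theorem~\ref{delta_hyperbolic_space_facts}(\ref{morse_bullet})) the geodesics $\alpha^{(i)}$ and $\alpha^{(i+1)}$ fellow-travel to within a constant $C=C(\delta)$, since $g_i$ and $g_{i+1}$ are distance $1$ apart and the two $L$-paths together with the edge $g_ig_{i+1}$ form a thin triangle. The crucial consequence is that $\alpha^{(i)}$ and $\alpha^{(i+1)}$ agree except in a terminal segment of bounded length $N=N(\delta)$. Third, define the state of the re-combing automaton at time $i$ to be the pair consisting of the current state of $\gamma'$ in $\Gamma'$ together with the last $N$ vertices of the lift of $\alpha^{(i)}$ in $\Gamma$ (equivalently, a bounded suffix of the $\Gamma$-path); this is a finite set of states. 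The transition reading $w'(i{+}1)$ updates the $\Gamma'$-component in the obvious way and updates the bounded suffix by deleting the backtracked portion of $\alpha^{(i)}$ and appending the new letters of $\alpha^{(i+1)}$ — both of bounded length, hence computable from the bounded memory. Finally, set the new derivative $d'\phi$ at each state to be the signed sum of $d\phi$ over the vertices of $\Gamma$ that are permanently added to (or removed from) the running $L$-path at that step; a telescoping argument shows $\sum_{i=0}^{|w'|} d'\phi = \sum_j d\phi(\text{vertices of } \alpha^{(|w'|)}) = \phi(g)$, using here that $\phi$ itself (not just $d\phi$) is well-defined on $G$, so the contributions from vertices that are added and later removed cancel. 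Lipschitz-ness of $\phi$ in $d_L$ and $d_R$ is a property of $\phi$ as a function on $G$, independent of combing, so the combable (resp. bicombable) conclusion follows once weak combability is established, and the argument is symmetric in $L,L'$.

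The main obstacle is making precise the claim that $\alpha^{(i)}$ and $\alpha^{(i+1)}$ differ only in a bounded terminal segment \emph{and} that this bounded segment suffices to compute the net change in $\phi$. The Morse Lemma gives Hausdorff-closeness of the two geodesics, but to conclude they literally share a common initial subpath (as $L$-words, hence as $\Gamma$-paths) one needs that $L$ is prefix-closed and bijective: closeness of the two geodesics forces their prefixes of a given length to evaluate to group elements a bounded distance apart, and a separate (standard, finite-state) argument — or an appeal to the "divergence" of distinct $L$-geodesics in a hyperbolic group — upgrades bounded distance to literal equality of prefixes outside a bounded terminal window. I would isolate this as a sublemma: \emph{in a bijective geodesic combing of a hyperbolic group, the $L$-geodesics to two elements at distance $1$ agree except on terminal segments of length $\le N(\delta,L)$}. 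Granting that sublemma, the bookkeeping of which $d\phi$-values to sum, and the verification that the telescoping yields exactly $\phi(g)$, is routine; the finiteness of the state set is immediate from the bound $N$.
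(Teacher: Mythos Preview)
Your proposed sublemma --- that the $L$-geodesics to two elements at distance $1$ agree except on terminal segments of bounded length --- is false, and this is where the argument breaks. Take $G = \Z \times \Z/2\Z$ with generators $a,a^{-1},b$ and the combing $L = \{a^n,\,ba^n\}$ of Example~\ref{big_variation}. The elements $a^n$ and $ba^n$ are at $d_L$-distance $1$, yet their $L$-representatives share no common prefix whatsoever; the same happens in non-elementary examples such as $F_2 \times \Z/2\Z$ with the analogous combing. Fellow-traveling, which the Morse Lemma does guarantee, is strictly weaker than sharing a long prefix: two $L$-geodesics from $\id$ can remain a bounded Hausdorff distance apart forever while running through entirely different states of $\Gamma$. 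Consequently your automaton is not well-defined: knowing only the last $N$ vertices of the $\Gamma$-path for $\alpha^{(i)}$ does not determine even the final vertex of the $\Gamma$-path for $\alpha^{(i+1)}$, since $\alpha^{(i+1)}$ need not be obtained from $\alpha^{(i)}$ by editing a bounded suffix. Your telescoping formula for $d'\phi$ likewise becomes an unbounded signed sum that cannot be read off from bounded data.

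The paper's proof avoids this by carrying strictly more information. Rather than tracking only the $\Gamma$-path to $g_i = \overline{w'_i}$, it records, for \emph{every} $b$ in the ball $B$ of radius $C_1$ about $\id$, the $\Gamma$-state reached by the (unique) $L$-word to $\overline{w'_i}b$ whenever that word fellow-travels $w'_i$, together with the difference $\phi(\overline{w'_i}b)-\phi(\overline{w'_i})$. This second coordinate is bounded exactly because $\phi$ is $d_L$-Lipschitz --- this is the one place combability (not mere weak combability) is used. The point is that when one steps from $i$ to $i{+}1$, the $L$-word to any element near $\overline{w'_{i+1}}$ is, by prefix-closure of $L$ and fellow-traveling, a bounded-length \emph{extension} of the $L$-word to some element near $\overline{w'_i}$, so its $\Gamma$-state is computable from data already recorded. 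In short, tracking a single $L$-path is too parsimonious; one must track the whole bundle of nearby $L$-paths simultaneously.
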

\begin{proof}
The property of being Lipschitz in either the left or right metric is independent of a choice of
generating set (and does not depend on a combing), so it suffices to prove the lemma for combable functions.
Let $\phi$ be combable with respect to $S$, $L$. We will show it is combable with respect to $S'$, $L'$.

Let $w'$ be a word in $L'$, and let $w$ be the word in $L$ with $\overline{w}' = \overline{w}$. Let $\gamma$
be a path in $C_S(G)$ corresponding to $w$, and let $\gamma'$ be a path in $C_{S'}(G)$ corresponding to $w'$.
Although $\gamma$ does not make sense as a {\em path} in $C_{S'}(G)$, nevertheless each vertex $\gamma(i)$
is an element of $G$, and therefore corresponds to a unique vertex in $C_{S'}(G)$. The ``discrete path''
determined by the vertices $\gamma(i)$ is a quasigeodesic in $C_{S'}(G)$, and by the Morse Lemma,
fellow travels the path $\gamma'$. In other words:
\begin{enumerate}
\item{there is a constant $C_1$ so that every $\gamma(i)$ is within distance $C_1$ of some $\gamma'(j)$, and every
$\gamma'(j)$ is within distance $C_1$ of some $\gamma(i)$}
\item{there is a constant $C_2$ so that if $\gamma(i)$ is within distance $C_1$ of $\gamma'(j)$, then
$\gamma(k)$ is within distance $C_1$ of $\gamma'(j+1)$ for some $k$ with $|k-i| \le C_2$}
\end{enumerate}
Here distances are all measured in $C_{S'}(G)$.

\medskip

Suppose we are given some $v' \in L'$. A word $v$ in $L$ is {\em good} (for $v'$) if it satisfies the
following properties. Let $\alpha'$ be the path in $C_{S'}(G)$ associated to $v'$, and $\alpha$ the path
in $C_{S}(G)$ associated to $v$, and think of the vertices $\alpha(i)$ as vertices in $C_{S'}(G)$.
Then $v$ is good if it satisfies properties (1) and (2) above (with $\alpha$ and $\alpha'$ in place
of $\gamma$ and $\gamma'$), and if furthermore the distance from $\overline{v}'$ to $\overline{v}$ is
at most $C_1$. Notice if $w_j'$ is a prefix of $w'$, then there is some prefix $w_i$ of $w$ as 
above that is good.

Let $\Gamma'$ be a digraph parameterizing $L'$, and let $\Gamma$ be a digraph parameterizing $L$
for which there is $d\phi$ from the states of $\Gamma$ to $\Z$ as in 
Definition~\ref{combable_function_definition}. Let $B$ denote the (set of vertices in the) 
ball of radius $C_1$ about $\id$
in $C_{S'}(G)$. For each $i$, let $\mu_i$ be the function whose domain is $B$, 
and for each $b \in B$ define $\mu_i(b)$ as follows:
\begin{enumerate}
\item{if there is no good word $v \in L$ (for $w'_i$) with $\overline{v}=\overline{w}'_ib$, set $\mu_i(b)$ equal to
an ``out of range'' symbol ${\tt{E}}$; otherwise:}
\item{set $\mu_i(b)$ equal to the tuple whose first entry is the value $\phi(\overline{w}'_ib)-\phi(\overline{w}'_i)$
and whose second entry is the state of $\Gamma$ after reading the word $v$}
\end{enumerate}
Note that given $b$, the word $v$ if it exists is unique, since $L \to G$ is a bijection. Moreover,
since $\phi$ is combable, there is a constant $C_3$ so that $\phi(\overline{w}'_ib)-\phi(\overline{w}'_i) \in [-C_3,C_3]$.
In other words, the range of $\mu_i$ is the {\em finite} set ${\tt{E}} \cup [-C_3,C_3] \times \Gamma$ 
where for the sake of brevity, and by abuse of notation,
we have denoted the set of states of $\Gamma$ by $\Gamma$ (this is the only point in the
argument where combability (as distinct from weak combability) is used). Finally, observe that $\mu_i(\id)$ is never equal
to ${\tt{E}}$, since there is a good $v$ for $w'_i$ with $\overline{v}=\overline{w}'_i$, for every $i$.

We claim that the function $\mu_i$ and the letter $w'(i+1)$ together determine the function $\mu_{i+1}$.
For each $b \in B$, the function $\mu_i$ tells us whether or not there is a good word $v$ with
$\overline{v} = \overline{w}'_ib$, and if there is, the state of $\Gamma$ after reading such a word.
There are finitely many words $x$ of length at most $C_2$ in $S^*$ which can be concatenated to $v$ to
obtain a word $vx \in L$, and the set of such $x$ depends only on the state of $\Gamma$ after reading $v$.
Each suffix $x$ determines a path $\beta$ in $C_S(G)$ and a discrete path $\overline{v}\beta(j)$ in
$C_{S'}(G)$. Whether the composition $vx$ is good for $w'_{i+1}$ or not depends only on the suffix $x$,
so the set of such suffixes can be determined for each $b \in B$, and the set of endpoints determines a
subset of vertices $b' \in B$ for which $\mu_{i+1}(b') \ne {\tt{E}}$. For each such $b'$, the state of $\Gamma$
after reading $vx$ can be determined, and the difference $\phi(\overline{vx})-\phi(\overline{v})$ can be determined by
the combability of $\phi$ with respect to $\Gamma$.

We claim further that the function $\mu_i$ and the letter $w'(i+1)$ together determine 
$\phi(\overline{w}'_{i+1}) - \phi(\overline{w}'_{i})$. For, there is a unique $b$ and good $v \in L$ with
$\overline{v} = \overline{w}'_ib$ which can be extended to $vx \in L$ with $\overline{vx}=\overline{w}'_{i+1}$.
Because we know the state of $\Gamma$ after reading $v$, we know $\phi(\overline{vx}) - \phi(\overline{v})$.
Moreover, we know $\phi(\overline{vx})-\phi(\overline{w}'_i)$, so we know $\phi(\overline{w}'_{i+1})-\phi(\overline{w}'_i)$
as claimed.

\medskip

From this data it is straightforward to construct a new $\Gamma''$ 
parameterizing $L'$ so that $d\phi$ exists as a function on (the states of) $\Gamma''$. 
Every word $w' \in L$ and every integer $i\ge 0$ determines a function $\mu_i$, and a state 
$\gamma'(i) \in \Gamma'$ (by abuse of notation, but in accordance with Convention~\ref{path_convention} we
denote the path in $\Gamma'$ associated to $w'$ by $\gamma'$), and thereby a triple
$(\mu_i,\gamma'(i),\phi(\overline{w}'_i) - \phi(\overline{w}'_{i-1}))$ (with the convention
$\phi(\overline{w}'_{-1})=0$). The vertices of $\Gamma''$ are
the set of all possible triples that occur, over all $w' \in L'$ and all $i$; note that this is a {\em finite} set.
The directed edges of $\Gamma''$ are the ordered pairs of triples that can occur for some $w'$ and
successive indices $i$, $i+1$. Evidently $\Gamma''$ parameterizes $L'$; moreover, $d\phi$ is the function
whose value on a triple as above is the last co-ordinate.
\end{proof}

The sum or difference of two (bi-)combable functions is (bi-)combable, and
the set of all (bi-)-combable functions on a fixed $G$ is therefore a
countably infinitely generated free Abelian summand of the 
Abelian group $\Z^G$ of all $\Z$-valued functions on $G$. 

\begin{example}
Word length with respect to a finite generating set $S_1$ is bicombable with respect to
$L$ where $L$ is a combing with respect to another finite generating set $S_2$
(compare with Example~\ref{word_length_bicombable}). 
\end{example}

\subsection{Quasimorphisms}\label{quasimorphism_section}

\begin{definition}
Let $G$ be a group. A {\em quasimorphism} is a real valued function $\phi$ on $G$
for which there is a least non-negative real number $D(\phi)$ called the {\em defect},
such that for all $a,b \in G$ there is an inequality
$$|\phi(a) + \phi(b) - \phi(ab)| \le D(\phi)$$
\end{definition}

In words, a function is a quasimorphism if it is a homomorphism up to a bounded error.

Quasimorphisms are related to stable commutator length and to $2$-dimensional bounded
cohomology. For a more substantial discussion, see
\cite{Bavard} or \cite{Calegari_scl_monograph}.

\begin{lemma}
A weakly combable quasimorphism is bicombable.
\end{lemma}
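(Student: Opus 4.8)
The plan is to reduce the claim to the two Lipschitz conditions of Lemma~\ref{left_and_right_lemma} and to observe that these hold for \emph{any} quasimorphism on a finitely generated group, so that the weak combability hypothesis supplies everything else with no further work. Concretely, by definition a weakly combable function is already weakly combable; by Definition~\ref{combable_function_definition} it only remains to check that a weakly combable quasimorphism $\phi$ is Lipschitz in both $d_L$ and $d_R$. By Lemma~\ref{left_and_right_lemma} this amounts to producing a single constant $C$ with $|\phi(as)-\phi(a)|\le C$ and $|\phi(sa)-\phi(a)|\le C$ for all $a\in G$ and all $s\in S$.

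To produce $C$, I would simply feed the generators into the defining inequality of a quasimorphism. Let $D=D(\phi)$ be the defect. Applying $|\phi(x)+\phi(y)-\phi(xy)|\le D$ with $(x,y)=(a,s)$ gives $|\phi(as)-\phi(a)|\le D+|\phi(s)|$, and with $(x,y)=(s,a)$ gives $|\phi(sa)-\phi(a)|\le D+|\phi(s)|$. Since $S$ is finite, the quantity $C:=D+\max_{s\in S}|\phi(s)|$ is a finite real number bounding both differences uniformly in $a$ and $s$. By Lemma~\ref{left_and_right_lemma}, $\phi$ is therefore Lipschitz in both the left-invariant and the right-invariant metrics; combined with the hypothesis that $\phi$ is weakly combable, this is exactly the statement that $\phi$ is bicombable.

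There is no real obstacle: the content of the lemma is entirely in the (already-noted) fact that quasimorphisms are automatically bi-Lipschitz, and the argument never even touches the combing $L$ or the digraph $\Gamma$. The only point worth stating carefully is the ``curious intertwining'' remarked after Lemma~\ref{left_and_right_lemma} — that $d_L$-Lipschitzness is controlled by right multiplication by generators and $d_R$-Lipschitzness by left multiplication — but since the quasimorphism inequality is symmetric in its two arguments, both estimates come out of the same computation. I would present the proof in essentially these three lines.
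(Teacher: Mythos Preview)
Your proof is correct and follows exactly the same approach as the paper: apply the quasimorphism inequality with the pairs $(a,s)$ and $(s,a)$ to get $|\phi(as)-\phi(a)|\le |\phi(s)|+D(\phi)$ and $|\phi(sa)-\phi(a)|\le |\phi(s)|+D(\phi)$, then use finiteness of $S$. The paper's version is simply a terser presentation of the same three lines.
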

\begin{proof}
For any $g \in G$ and $s \in S$ we have 
$$|\phi(gs) - \phi(g)| \le |\phi(s)| + D(\phi)$$
and
$$|\phi(sg) - \phi(g)| \le |\phi(s)| + D(\phi)$$
Since $s$ is finite, the conclusion follows.
\end{proof}

Conversely, let $\phi$ be bicombable. Then we have the following lemma:

\begin{lemma}\label{subdivide_lemma}
Let $\phi$ be bicombable. Then there is a constant $C$ so that if $w \in L$ is expressed as
a product of subwords $w = uv$ then $|\phi(\overline{w}) - \phi(\overline{u}) - \phi(\overline{v})| \le C$.
\end{lemma}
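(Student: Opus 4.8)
The plan is to exploit the fact that $w = uv$ is a geodesic in $C_S(G)$, so its terminal subword $v$ traces out (after translating back to the identity) a geodesic path based at $\overline{u}$, and hence $\overline{u}^{-1}\overline{w} = \overline{v}$; the subtlety is that the combing may not express $\overline v$ by $v$ itself, and the digraph $\Gamma$ has no ``memory'' that it has just finished reading the prefix $u$. So I would argue in two stages: first control $\phi(\overline w) - \phi(\overline u)$ using the weak combability (the sum-over-vertices formula), and then compare the resulting sum to $\phi(\overline v)$ using the bi-Lipschitz property.

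First I would write, using Definition~\ref{combable_function_definition},
$$\phi(\overline{w}) - \phi(\overline{u}) = \sum_{i=|u|+1}^{|w|} d\phi(\gamma(i)),$$
where $\gamma$ is the path in $\Gamma$ corresponding to $w$. This is a sum of $|v|$ values of the bounded function $d\phi$, indexed by the states $\gamma(|u|), \gamma(|u|+1), \dots, \gamma(|w|)$ visited while reading the suffix $v$. Separately, let $v'$ be the word in $L$ with $\overline{v'} = \overline{v} = \overline{u}^{-1}\overline{w}$, and let $\delta$ be its path in $\Gamma$, so that $\phi(\overline{v}) = \sum_{j=0}^{|v'|} d\phi(\delta(j))$. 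Since both $v$ and $v'$ are geodesic words for group elements at distance $|v|$ apart when based appropriately, $|v'| = |v|$, and the two discrete paths $i \mapsto \overline{u}\cdot\overline{v_{i}}$ (i.e. $\overline{w_{|u|+i}}$) and $i \mapsto \overline{u}\cdot\overline{v'_i}$ both run from $\overline u$ to $\overline w$ and are geodesics in $C_S(G)$; by $\delta$-hyperbolicity (Theorem~\ref{delta_hyperbolic_space_facts}, two geodesics with common endpoints $2\delta$-fellow-travel) there is a matching $i \leftrightarrow j(i)$ with $|i - j(i)|$ bounded by a constant depending only on $\delta$ and $|S|$.

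The key step is then to telescope: I want to compare $\sum_{i>|u|} d\phi(\gamma(i))$ with $\sum_j d\phi(\delta(j))$ term by term along the fellow-traveling matching. Here is where bicombability, rather than mere weak combability, is essential. The partial sum $\sum_{i=|u|+1}^{|u|+k} d\phi(\gamma(i))$ equals $\phi(\overline{w_{|u|+k}}) - \phi(\overline u)$, i.e. $\phi(\overline u \cdot \overline{v_k}) - \phi(\overline u)$, while the corresponding partial sum for $v'$ is $\phi(\overline{v'_{j(k)}})$. Using that $\phi$ is Lipschitz in the \emph{right}-invariant metric $d_R$, left-multiplication by $\overline u$ changes $\phi$ by at most $|\phi(\overline u \cdot x) - \phi(x)| \le C' d_R(\overline u \cdot x, x)$ — wait, this is not bounded. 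So instead I would apply the $d_R$-Lipschitz bound to compare $\phi(\overline u \cdot \overline{v_k})$ and $\phi(\overline u \cdot \overline{v'_{j(k)}}) = \phi(\overline{w})$ only when $k$ is near $|v|$, and the $d_L$-Lipschitz bound to compare $\phi(\overline{w_{|u|+k}})$ with nearby vertices. The cleanest route: since $\overline{v_k}$ and $\overline{v'_{j(k)}}$ differ (as group elements) by a word of bounded length (they lie within bounded $C_S(G)$-distance by fellow-traveling), left translation by $\overline u$ preserves that distance, so $d_L(\overline u\,\overline{v_k}, \overline u\,\overline{v'_{j(k)}})$ is bounded, and $d_L$-Lipschitzness of $\phi$ gives $|\phi(\overline u\,\overline{v_k}) - \phi(\overline u\,\overline{v'_{j(k)}})|$ bounded. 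Likewise $d_L(\overline{v_k}, \overline{v'_{j(k)}})$ is bounded, giving $|\phi(\overline{v_k}) - \phi(\overline{v'_{j(k)}})|$ bounded. Thus each side's increments, after re-indexing, match up to a uniformly bounded error, and the number of ``mismatch'' increments near each matched pair is also uniformly bounded (by the $|i-j(i)|$ bound), with each such increment bounded by $\|d\phi\|_\infty$. Summing at the extremes $k = |v|$ gives $|\phi(\overline w) - \phi(\overline u) - \phi(\overline v)| \le C$ with $C$ depending only on $\phi$, $S$, $\delta$, and the digraph — not on $w$.

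The main obstacle I anticipate is making the term-by-term comparison honest: $d\phi$ is a function on \emph{states} of $\Gamma$, not on group elements, so $d\phi(\gamma(i))$ and $d\phi(\delta(j))$ need not be literally equal even when $\gamma(i)$ and $\delta(j)$ sit over nearby group elements. This is exactly why I would telescope back to values of $\phi$ itself (which \emph{is} a well-defined function of the group element) via the partial-sum identity, and then invoke both Lipschitz conditions — the $d_L$ condition to absorb the translation by $\overline u$ on the ``$v$-side,'' and the fellow-traveling estimate to bound the index drift. I would phrase the final count carefully so that every error term is visibly independent of $|w|$.
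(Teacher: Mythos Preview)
There is a genuine gap in your argument. Your fellow-traveling estimate establishes that $d_L(\overline{v_k},\overline{v'_{j(k)}})$ and $d_L(\overline{u}\,\overline{v_k},\overline{u}\,\overline{v'_{j(k)}})$ are bounded, and hence (by the $d_L$-Lipschitz property) that $|\phi(\overline{v_k})-\phi(\overline{v'_{j(k)}})|$ and $|\phi(\overline{u}\,\overline{v_k})-\phi(\overline{u}\,\overline{v'_{j(k)}})|$ are bounded. But neither of these bridges the left-translation by $\overline{u}$: to conclude the lemma you must compare $\phi(\overline{u}\,\overline{v})-\phi(\overline{u})$ with $\phi(\overline{v})$, and nothing you have written relates $\phi(\overline{u}\cdot x)$ to $\phi(x)$ for any $x$. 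If you try to sum your per-increment comparisons you get an error of order $|v|$, not $O(1)$; and if instead you try to compare the running partial sums $\phi(\overline{w_{|u|+k}})-\phi(\overline{u})$ and $\phi(\overline{v'_k})$ directly, that is just the lemma itself with $v_k$ in place of $v$, so the argument is circular. You correctly sensed in your ``wait, this is not bounded'' aside that the $d_R$-Lipschitz condition must be the key, but you never found a place where it applies.

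The paper's proof supplies the missing idea, and it is an automaton trick rather than a fellow-traveling one. Since $\Gamma$ has finitely many vertices, there is a word $u'\in L$ of bounded length whose path in $\Gamma$ ends at the \emph{same state} as the path for $u$. Then $u'v\in L$, and the states visited while reading $v$ after $u'$ are literally the states $\gamma(|u|+1),\ldots,\gamma(|w|)$; hence
\[
\phi(\overline{w})-\phi(\overline{u}) \;=\; \phi(\overline{u'v})-\phi(\overline{u'}).
\]
Now $|u'|$ is bounded, so $|\phi(\overline{u'})|$ is bounded; and $d_R(\overline{u'v},\overline{v})=|\overline{u'}\cdot\overline{v}\cdot\overline{v}^{-1}|=|\overline{u'}|$ is bounded, so the $d_R$-Lipschitz property gives $|\phi(\overline{u'v})-\phi(\overline{v})|$ bounded. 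The point is that replacing the arbitrarily long prefix $u$ by a \emph{short} $u'$ landing on the same automaton state is exactly what makes the right-Lipschitz bound applicable --- without it, as you observed, $d_R(\overline{u}\cdot x,x)=|\overline{u}|$ is uncontrolled.
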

\begin{proof}
Let $u' \in L$ be any word so that the paths in $\Gamma$ associated to $u$ and to $u'$
end at the same vertex. Then $\phi(\overline{w}) = \phi(\overline{u}) + \phi(\overline{u'v}) -
\phi(\overline{u'})$. Now choose $u'$ of bounded length, and notice that the difference of $\phi(\overline{u'v})$
and $\phi(\overline{v})$ is bounded because of the Lipschitz condition.
\end{proof}

Using this Lemma we can construct many bicombable quasimorphisms on $G$.

\begin{theorem}\label{bicombable_quasimorphisms_exist}
Let $G$ be hyperbolic, and let $S$ be a finite generating set for $G$ as a semigroup.
Let $\phi_S$ be the bicombable function which counts word length with respect to $S$, and
$\phi_{S^{-1}}$ the bicombable function which counts word length with respect to $S^{-1}$.
Then $\psi_S: = \phi_S - \phi_{S^{-1}}$ is a bicombable quasimorphism.
\end{theorem}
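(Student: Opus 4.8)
The first assertion, that $\psi_S$ is bicombable, is immediate: $\phi_S$ and $\phi_{S^{-1}}$ are bicombable (Example~\ref{word_length_bicombable}, together with the fact that word length with respect to one finite generating set is bicombable with respect to a combing for any other), and a difference of bicombable functions is bicombable. So the real content is that $\psi_S$ is a quasimorphism. I would begin with two reductions. First, $\phi_{S^{-1}}(g) = |g|_{S^{-1}} = |g^{-1}|_S = \phi_S(g^{-1})$, so $\psi_S(g) = \phi_S(g) - \phi_S(g^{-1})$; in particular $\psi_S$ is antisymmetric, and to bound its defect it suffices to bound $\psi_S(ab) - \psi_S(a) - \psi_S(b)$ uniformly in $a,b$. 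Setting $\Delta(x,y) := \phi_S(xy) - \phi_S(x) - \phi_S(y)$, one has the trivial identity $\psi_S(ab) - \psi_S(a) - \psi_S(b) = \Delta(a,b) - \Delta(b^{-1},a^{-1})$. Second, since bicombability and the Lipschitz conditions in $d_L$ and $d_R$ do not depend on the generating set or combing (Lemma~\ref{combing_independent_lemma}), I fix once and for all a \emph{symmetric} generating set and a combing $L$ for it; then $|g| = |g^{-1}|$ in this metric, $\phi_S$ is bicombable with respect to $L$, and Lemma~\ref{subdivide_lemma} applies to it. All implied constants below depend only on this data.

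Now fix $a,b$ and consider the geodesic triangle with vertices $\id$, $a$, $ab$ whose three sides are the combing geodesic for $a$, the combing geodesic for $b$ translated on the left by $a$, and the combing geodesic for $ab$. Let $p$, $q$, $r$ be vertices within $O(\delta)$ of the internal points of this triangle on the respective sides $[\id,a]$, $[a,ab]$, $[\id,ab]$; by $\delta$-thinness, $p$, $q$, $r$ are pairwise within $O(\delta)$. Cutting each of the three combing words at the appropriate prefix, Lemma~\ref{subdivide_lemma} gives $\phi_S(a) = \phi_S(p) + \phi_S(p^{-1}a) + O(1)$, $\phi_S(b) = \phi_S(a^{-1}q) + \phi_S(q^{-1}ab) + O(1)$ (here $a^{-1}q$ is the endpoint of a prefix of the combing word for $b$), and $\phi_S(ab) = \phi_S(r) + \phi_S(r^{-1}ab) + O(1)$. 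Since $d_L(p,r)$ and $d_L(q,r)$ are $O(\delta)$, Lipschitzness of $\phi_S$ in $d_L$ lets me replace $\phi_S(r)$ by $\phi_S(p)$, and Lipschitzness in $d_R$ --- applied to left-multiplication of $q^{-1}ab$ by the short element $r^{-1}q$ --- lets me replace $\phi_S(r^{-1}ab)$ by $\phi_S(q^{-1}ab)$. Substituting into $\Delta(a,b)$, the middle terms cancel and we get $\Delta(a,b) = -\phi_S(p^{-1}a) - \phi_S(a^{-1}q) + O(1)$. This is the step where \emph{both} Lipschitz conditions, i.e. bicombability rather than mere combability, are used.

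The identical argument applied to the triangle $\id$, $b^{-1}$, $b^{-1}a^{-1}$ gives $\Delta(b^{-1},a^{-1}) = -\phi_S(\hat p^{-1}b^{-1}) - \phi_S(b\hat q) + O(1)$, where $\hat p$, $\hat q$ are the internal points of the new triangle on $[\id,b^{-1}]$ and $[b^{-1},b^{-1}a^{-1}]$. This triangle shares its vertex set with the image of the first triangle under the left-translation isometry $x\mapsto (ab)^{-1}x$, and internal points are determined up to $O(\delta)$ by the triangle's vertices alone (two geodesics with common endpoints fellow-travel), so $\hat p = (ab)^{-1}q + O(\delta)$ and $\hat q = (ab)^{-1}p + O(\delta)$; computing, $\hat p^{-1}b^{-1} = q^{-1}a$ and $b\hat q = a^{-1}p$ up to $O(\delta)$, whence $\Delta(b^{-1},a^{-1}) = -\phi_S(q^{-1}a) - \phi_S(a^{-1}p) + O(1)$. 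Subtracting, and using $p^{-1}a = (a^{-1}p)^{-1}$, $a^{-1}q = (q^{-1}a)^{-1}$, and $\phi_S(h) - \phi_S(h^{-1}) = \psi_S(h)$,
$$\psi_S(ab) - \psi_S(a) - \psi_S(b) \;=\; \psi_S(a^{-1}p) + \psi_S(q^{-1}a) + O(1) \;=\; \psi_S(a^{-1}p) - \psi_S(a^{-1}q) + O(1),$$
the last equality by antisymmetry of $\psi_S$ and $q^{-1}a = (a^{-1}q)^{-1}$. Finally $d_L(a^{-1}p, a^{-1}q) = d_L(p,q) = O(\delta)$ by left-invariance, so the right-hand side is $O(1)$ since $\psi_S$ is Lipschitz in $d_L$. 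Hence the defect of $\psi_S$ is bounded and $\psi_S$ is a quasimorphism.

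The routine ingredients are the two reductions and the one-triangle estimate, which is essentially Lemma~\ref{subdivide_lemma} dressed up with the Lipschitz bounds. The delicate point is the bookkeeping in the third paragraph --- keeping track of which internal point lies on which side, of the group elements they name, and of the matching between the two triangles' internal points under the translation --- and, conceptually, the fact that the final cancellation relies on $\psi_S$ being Lipschitz in the \emph{left}-invariant metric. That last point is where bicombability is genuinely needed; the statement is false for merely combable functions.
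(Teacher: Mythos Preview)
Your proof is correct. The core of both arguments is the same --- decompose combing words at the internal points of a thin triangle and apply Lemma~\ref{subdivide_lemma} together with the two Lipschitz conditions --- but you take a slightly longer route than the paper. By rewriting $\psi_S(g)=\phi_S(g)-\phi_S(g^{-1})$ and working only with $\phi_S$, you are in effect proving the more general statement recorded in Remark~3.10 (the antisymmetrization of \emph{any} bicombable function is a quasimorphism); this forces you to analyze a second triangle $\id,b^{-1},b^{-1}a^{-1}$ and match its internal points to those of the first via the translation $x\mapsto (ab)^{-1}x$, which is where all your bookkeeping goes. The paper avoids this detour by keeping $\phi_S$ and $\phi_{S^{-1}}$ separate: applying Lemma~\ref{subdivide_lemma} to each on the single triangle $\id,g,gh$ and subtracting gives $\psi_S(gh)-\psi_S(g)-\psi_S(h)=-\psi_S(\overline{x})-\psi_S(\overline{y})+O(1)$, where $\overline{x},\overline{y}$ are the two ``hair'' segments near the center; since $d_L(\overline{y},\overline{x}^{-1})=O(\delta)$ and $\psi_S$ is antisymmetric and $d_L$-Lipschitz, this is $O(1)$ immediately. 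So the paper's argument is shorter, while yours buys the extra generality of Remark~3.10.
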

\begin{proof}
Since each of $\phi_S$ and $\phi_{S^{-1}}$ are bicombable, so is their difference.
Let $g,h$ be arbitrary. Let $w,u,v \in L$ correspond to $gh,g,h$ respectively.
Then we can write $u = u'x, v = yv'$ and $w = w_1w_2$ as words in $L$ so that
$d_L(y,x^{-1}) \le \delta$, $d_L(u',w_1) \le \delta$ and $d_R(v',w_n) \le \delta$ by $\delta$-thinness
of triangles. Now apply Lemma~\ref{subdivide_lemma}.
\end{proof}

\begin{remark}
Given an arbitrary bicombable function $\phi$, one can define $\psi$ by $\psi(g) = \phi(g) - \phi(g^{-1})$.
It is not necessarily true that $\psi$ is bicombable, but an argument similar to the proof of
Theorem~\ref{bicombable_quasimorphisms_exist} shows that $\psi$ is a quasimorphism.
\end{remark}

\subsection{Counting quasimorphisms}

Among the earliest known examples of quasimorphisms are counting quasimorphisms on free groups,
first introduced by Brooks \cite{Brooks}. Brooks' construction was
generalized to word-hyperbolic groups by Epstein-Fujiwara \cite{Epstein_Fujiwara}.

\begin{definition}
Let $G$ be a hyperbolic group with generating set $S$. Let $\sigma$ be an
oriented simplicial path in the Cayley graph $C_S(G)$ and let $\sigma^{-1}$ denote the
same path with the opposite orientation. A {\em copy} of $\sigma$ is a translate
$a\cdot \sigma$ for some $a \in G$. For $\alpha$ an oriented simplicial path in
$C_S(G)$, let $|\alpha|_{\sigma}$ denote the maximal number of disjoint copies of $\sigma$
contained in $\alpha$. For a directed path $\alpha$ in $C_S(G)$ from $\id$ to $a$,
define
$$c_\sigma(\alpha) = |a| - (\length(\alpha) - |\alpha|_\sigma)$$ and
define
$$c_\sigma(a) = \sup_\alpha c_\sigma(\alpha)$$
where the supremum is taken over all directed paths $\alpha$ in $C_S(G)$ from
$\id$ to $a$.

Define a {\em counting quasimorphism} to be a function of the form
$$\phi_\sigma(a):=c_\sigma(a) - c_{\sigma^{-1}}(a)$$
\end{definition}

\begin{remark}
There is no logical necessity to count disjoint copies of $\sigma$ in $\alpha$ rather than
all copies. The latter ``big'' counting function agrees with the convention of Brooks whereas
the former ``small'' counting function agrees with the convention of Epstein-Fujiwara.
\end{remark}

In the sequel, by convention assume that the length of $\sigma$ is at least $2$. From
the definition, $\phi_\sigma(g) = -\phi_\sigma(g^{-1})$ for all $g$.
A path $\alpha$ which realizes the supremum in the formula above is called a
{\em realizing path} for $a$.
Since the value of each term of the form $|\cdot|$, $\length(\cdot)$, $|\cdot|_\sigma$ is integral,
a realizing path exists for every $a$.

The crucial property of realizing paths for our applications is the following
quasigeodesity property:

\begin{lemma}[Epstein-Fujiwara \cite{Epstein_Fujiwara}]\label{uniform_estimate}
Let $\sigma$ be a path in $C_S(G)$ of length at least $2$,
and let $a \in G$. A realizing path for $a$ is a $K,\epsilon$-quasigeodesic in $C_S(G)$,
where $K$ and $\epsilon$ are universal.
\end{lemma}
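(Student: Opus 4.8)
The plan is to exploit the specific algebraic structure of the quantity $c_\sigma(\alpha) = |a| - (\length(\alpha) - |\alpha|_\sigma)$. First I would observe that for \emph{any} path $\alpha$ from $\id$ to $a$, one automatically has $\length(\alpha) - |\alpha|_\sigma \ge \length(\alpha) - \length(\alpha)/2 \ge \length(\alpha)/2 \ge |a|/2$, since the copies of $\sigma$ counted in $|\alpha|_\sigma$ are disjoint and each has length at least $2$. Hence $c_\sigma(\alpha) \le |a| - |a|/2 = |a|/2$, and in particular $c_\sigma(a) \le |a|/2$ is finite and the supremum is attained (as already noted in the text, since all terms are integral). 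More to the point, for a realizing path $\alpha$ this gives $\length(\alpha) - |\alpha|_\sigma = |a| - c_\sigma(a) \le |a|$, so $\length(\alpha) \le |a| + |\alpha|_\sigma \le |a| + \length(\alpha)/2$, whence $\length(\alpha) \le 2|a|$. This already bounds the length of a realizing path linearly in $d(\id,a)$, which is the multiplicative ($K$) part of the quasigeodesic estimate.

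Next I would upgrade this to a genuine quasigeodesity statement by applying it not just to $\alpha$ as a whole but to every subsegment. Let $\alpha$ be a realizing path for $a$, and let $\beta$ be the subpath of $\alpha$ between $\alpha(i)$ and $\alpha(j)$ (with $i < j$), say running from group element $p$ to group element $q$. The key claim is that $\beta$ is a realizing path (or nearly so) for $p^{-1}q$: if some competing path $\beta'$ from $p$ to $q$ satisfied $\length(\beta') - |\beta'|_\sigma < \length(\beta) - |\beta|_\sigma$, then splicing $\beta'$ into $\alpha$ in place of $\beta$ would produce a path $\alpha'$ from $\id$ to $a$ with $\length(\alpha') - |\alpha'|_\sigma$ strictly smaller than $\length(\alpha) - |\alpha|_\sigma$ — up to a bounded additive error coming from copies of $\sigma$ that straddle the two splice points (at most one such copy at each end, contributing an error of at most $2$ to the count). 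Since $\sigma$ has length at least $2$, this error is a universal constant. So $\beta$ is a realizing path for $p^{-1}q$ up to an additive constant, and applying the length bound from the first paragraph to $\beta$ gives $\length(\beta) \le 2\,d(p,q) + \text{const} = 2\,d(\alpha(i),\alpha(j)) + \text{const}$, i.e. $|j - i| \le 2\,d(\alpha(i),\alpha(j)) + \epsilon$ for a universal $\epsilon$. This is exactly the definition of a $(2,\epsilon)$-quasigeodesic, with $K = 2$ and $\epsilon$ universal.

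The main obstacle I anticipate is the bookkeeping around the splice points: one must argue carefully that when a subpath $\beta$ of $\alpha$ is replaced by a competitor $\beta'$, the number of disjoint $\sigma$-copies in the resulting concatenation does not jump by more than a bounded amount relative to $|\alpha|_\sigma - |\beta|_\sigma + |\beta'|_\sigma$. The point is that a maximal collection of disjoint copies of $\sigma$ in $\alpha$ restricts to disjoint copies in each of the three pieces (the prefix of $\alpha$ up to $p$, the middle $\beta$, and the suffix from $q$), losing at most the (at most two) copies that straddle a cut point; conversely a maximal collection in $\alpha'$ restricts similarly. Since $|\sigma| \ge 2$ is fixed, ``straddling copies'' are bounded in number and in length, so all errors incurred are absorbed into the universal constant $\epsilon$. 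Once that combinatorial lemma is pinned down, the quasigeodesity estimate with universal constants follows immediately, and crucially the constants depend only on $\delta$ (via nothing, in fact) and on the fixed bound $|\sigma| \ge 2$, not on $\sigma$ itself or on $a$ — which is the ``uniform'' content of the statement.
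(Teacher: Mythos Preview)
The paper does not actually prove this lemma; it is simply quoted from Epstein--Fujiwara \cite{Epstein_Fujiwara} and used as a black box. So there is no ``paper's own proof'' to compare against.

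That said, your argument is correct and is essentially the original Epstein--Fujiwara proof. The two key observations --- that $|\alpha|_\sigma \le \tfrac{1}{2}\length(\alpha)$ because copies are disjoint of length $\ge 2$, and that subpaths of a realizing path are themselves realizing up to an additive error of $2$ coming from straddling copies --- combine exactly as you describe. Carrying the constants through, one gets $\length(\beta) - |\beta|_\sigma \le d(p,q) + 2$ for every subpath $\beta$, hence $\length(\beta) \le 2\,d(p,q) + 4$, so a realizing path is a $(2,4)$-quasigeodesic. Your discussion of the splice bookkeeping is accurate: since the copies in a maximal disjoint collection are pairwise disjoint, at most one can cross each cut point, and that is the only place an error term enters.
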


Note if $G$ is $\delta$-hyperbolic with respect to some $S$, then a $K,\epsilon$-quasigeodesic
is within distance $C$ of an actual geodesic, where $C$ depends on $K,\epsilon,\delta$.
In other words, a realizing path (asynchronously) fellow-travels
any geodesic representative of the word.

\subsection{Greedy algorithm}

Let $\sigma$ be a string. Let $k_\sigma$ count the maximal number of disjoint copies of
$\sigma$ in a word $w$, and let $k'_\sigma$ count disjoint copies of $\sigma$ in $w$  by using
the {\em greedy algorithm}: that is, there is equality
$k'_\sigma(w) = k'_\sigma(v) + 1$ where $v$ is the word obtained from $w$ by deleting the prefix
up to and including the first occurrence of $\sigma$.

\begin{lemma}[Greedy is good]\label{greedy_is_best}
The functions $k_\sigma$ and $k'_\sigma$ are equal.
\end{lemma}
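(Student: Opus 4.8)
The plan is to show $k_\sigma = k'_\sigma$ by the obvious inequality in one direction and an exchange argument in the other. Since the greedy algorithm produces \emph{some} collection of disjoint copies of $\sigma$ in $w$, we certainly have $k'_\sigma(w) \le k_\sigma(w)$. For the reverse inequality I would induct on $|w|$. Fix a maximal collection $\mathcal{C}$ of disjoint copies of $\sigma$ in $w$ realizing $k_\sigma(w)$, and let $\sigma_1$ be the copy in $\mathcal{C}$ whose terminal letter occurs earliest in $w$. Let $\tau$ be the first occurrence of $\sigma$ in $w$ at all (which the greedy algorithm deletes along with everything before it). The key observation is that $\tau$ ends no later than $\sigma_1$ does — indeed $\tau$ is the \emph{first} occurrence of $\sigma$, so its terminal letter has index at most that of the terminal letter of $\sigma_1$.

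The heart of the argument is the exchange step: I claim we may replace $\sigma_1$ by $\tau$ in $\mathcal{C}$ without losing disjointness. Every other copy $\sigma_j \in \mathcal{C}$ with $j \ne 1$ is disjoint from $\sigma_1$; since $\sigma_1$ was chosen to be the copy in $\mathcal{C}$ ending earliest, every such $\sigma_j$ must lie entirely \emph{after} $\sigma_1$ (if $\sigma_j$ ended before $\sigma_1$ it would contradict minimality; if it straddled $\sigma_1$ it would not be disjoint). Because $\tau$ ends no later than $\sigma_1$, and $\tau$ has the same length as $\sigma_1$, the copy $\tau$ also lies entirely before all the $\sigma_j$ with $j \ne 1$ — so $(\mathcal{C} \setminus \{\sigma_1\}) \cup \{\tau\}$ is again a disjoint collection of $k_\sigma(w)$ copies, and it now contains the greedy first copy $\tau$. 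Let $v$ be the suffix of $w$ obtained by deleting the prefix of $w$ up to and including $\tau$. All of $\sigma_2, \dots$ lie in $v$, so $k_\sigma(v) \ge k_\sigma(w) - 1$; conversely $k_\sigma(v) \le k_\sigma(w) - 1$ since prepending $\tau$ to any disjoint family in $v$ gives a disjoint family in $w$, hence $k_\sigma(v) = k_\sigma(w)-1$. By the inductive hypothesis $k'_\sigma(v) = k_\sigma(v) = k_\sigma(w) - 1$, and by definition of the greedy count $k'_\sigma(w) = k'_\sigma(v) + 1 = k_\sigma(w)$, completing the induction. The base case $|w| = 0$, or more generally any $w$ not containing $\sigma$, is immediate since both counts are $0$.

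The main obstacle is getting the exchange step exactly right: one has to be careful that ``ending earliest'' is the correct extremal choice (as opposed to ``starting earliest''), since it is the terminal position of $\tau$ versus $\sigma_1$ that controls whether swapping preserves disjointness with the later copies. Once that is pinned down the rest is bookkeeping, and the equality $k_\sigma(v) = k_\sigma(w) - 1$ needed to feed the induction is routine. I would state the argument for strings $w \in S^*$ directly; the application to paths $\alpha$ in $C_S(G)$ and the functions $|\alpha|_\sigma$ is then a translation via the labelling of edges by generators.
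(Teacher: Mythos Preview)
Your proof is correct. The paper argues instead by taking a shortest counterexample $w$ and working at the \emph{terminal} end: since $w$ is shortest, its suffix must be a copy of $\sigma$ counted by some maximal packing but not by greedy, so greedy's last copy overlaps this terminal copy; deleting the terminal $\sigma$ then drops both $k_\sigma$ and $k'_\sigma$ by $1$, contradicting minimality of $w$. Your approach is the dual one, working at the \emph{initial} end with an explicit exchange (``greedy stays ahead'') step and ordinary induction on $|w|$. Both are standard proofs that greedy is optimal for packing disjoint equal-length intervals; yours is slightly longer but makes the exchange step completely transparent and cleanly isolates the identity $k_\sigma(v)=k_\sigma(w)-1$ feeding the induction, while the paper's minimal-counterexample version is terser but leaves a bit more to the reader (for instance, why the discrepancy is exactly $1$ and why deleting the terminal copy drops \emph{both} counts by $1$).
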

\begin{proof}
Let $w$ be a shortest word for which $k_\sigma(w)$ and $k'_\sigma(w)$ are not equal. By definition,
$k'_\sigma(w) < k_\sigma(w)$ and since $w$ is shortest, $k'_\sigma(w) = k_\sigma(w)-1$. Since
$w$ is shortest, the suffix of $w$ must be a copy of $\sigma$ which is counted by $k_\sigma$ but
not by $k'_\sigma$. Hence the greedy algorithm must count a copy of $\sigma$ which overlaps with this
copy. So deleting the terminal copy of $\sigma$ reduces the values of both $k_\sigma$ and $k'_\sigma$
by $1$, contrary to the hypothesis that $w$ was shortest.
\end{proof}

\subsection{Hyperbolic groups}\label{hyperbolic_subsection}

The purpose of this section is to prove that the Epstein-Fujiwara counting
quasimorphisms are bicombable. The proof has a number of structural similarities with the
proof of Lemma~\ref{combing_independent_lemma}. Note that it suffices to show that they
are weakly combable, since any quasimorphism is Lipschitz in both the $d_L$ and the $d_R$
metrics.

\begin{theorem}\label{counting_hyperbolic_combable}
Counting quasimorphisms on hyperbolic groups are bicombable.
\end{theorem}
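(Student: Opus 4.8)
The plan is to show that the counting quasimorphism $\phi_\sigma = c_\sigma - c_{\sigma^{-1}}$ is weakly combable with respect to the language $L$ of lexicographically-first geodesics (or any bijective geodesic combing), since by Lemma~\ref{combing_independent_lemma} the choice of combing is irrelevant, and since any quasimorphism is automatically Lipschitz in both $d_L$ and $d_R$, so weak combability will upgrade to bicombability for free. Moreover it suffices to treat $c_\sigma$ alone: if we can show that along the combing the ``derivative'' of $c_\sigma$ is computed by a finite-state automaton, the same applies to $c_{\sigma^{-1}}$, and the difference of two weakly combable functions is weakly combable.

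The first step is to fix $w \in L$ with corresponding path $\gamma$ in the Cayley graph $C_S(G)$ from $\id$ to $\overline{w}$, and for each prefix $w_i$ consider a realizing path $\alpha^{(i)}$ for $\overline{w_i}$. By Lemma~\ref{uniform_estimate} each $\alpha^{(i)}$ is a $(K,\epsilon)$-quasigeodesic with $K,\epsilon$ universal, so by the Morse Lemma it lies within a universal distance $C$ of the geodesic $\gamma_i$. The key observation — this is where one mimics the bookkeeping in the proof of Lemma~\ref{combing_independent_lemma} — is that because $c_\sigma(\overline{w_i}) = |\overline{w_i}| - (\length(\alpha^{(i)}) - |\alpha^{(i)}|_\sigma)$ and $|\overline{w_i}| = i$, the increment $c_\sigma(\overline{w_{i+1}}) - c_\sigma(\overline{w_i})$ depends only on how a realizing path can be extended from $\overline{w_i}$ to $\overline{w_{i+1}}$; and since realizing paths fellow-travel $\gamma$, all the ``action'' (the extra copies of $\sigma$ being counted, the overshoot and backtracking of the realizing path) happens inside a bounded-radius window around the current vertex $\gamma(i)$. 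So I would define a state to record: the vertex $\gamma(i)$ of $\Gamma$ reached by the combing, together with a bounded amount of local data — roughly, for each point $b$ in a ball of universal radius about $\overline{w_i}$, whether there is a realizing-path-initial-segment reaching $b$ and, if so, the relevant synchronization data (a suffix of length $\le |\sigma|$ of the path so far, recording any partially-completed copy of $\sigma$, plus the ``defect'' $\length(\alpha) - |\alpha|_\sigma$ relative to its minimum). Using Lemma~\ref{greedy_is_best} one can count copies of $\sigma$ greedily, which keeps this memory finite: one never needs to remember more than the current partially-matched prefix of $\sigma$.

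One then checks, exactly as in Lemma~\ref{combing_independent_lemma}, that the state at step $i$ together with the next letter $w(i+1)$ determines both the state at step $i+1$ and the increment $c_\sigma(\overline{w_{i+1}}) - c_\sigma(\overline{w_i})$; the set of states is finite because all the recorded data lives in balls of universal radius and in suffixes of length $\le|\sigma|$, and the increments are bounded because $c_\sigma$ is Lipschitz in $d_L$ (being a quasimorphism minus $\phi_{S^{-1}}$, or directly from the definition). Packaging the increments as the function $d\phi$ on states of the resulting digraph $\Gamma'$ then exhibits $c_\sigma$ as weakly combable, and hence $\phi_\sigma$ is bicombable.

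The main obstacle I expect is making precise the claim that a realizing path for $\overline{w_{i+1}}$ can be obtained from one for $\overline{w_i}$ by a bounded local modification — a priori a single extra generator could force the realizing path to reroute over a long stretch. The resolution is the fellow-traveling from Lemma~\ref{uniform_estimate}: both realizing paths track the geodesic $\gamma$, so their symmetric difference is confined to a bounded neighborhood of $\gamma(i)$, and the greedy counting of Lemma~\ref{greedy_is_best} guarantees that the copies of $\sigma$ relevant to the increment are either far in the past (already committed and irrelevant) or inside that bounded window. Turning ``bounded neighborhood'' into an explicit finite alphabet of local configurations — and verifying the transition function is well-defined, i.e. independent of which realizing path one picked — is the technical heart of the argument, but it is the same style of finite-memory simulation already carried out in Lemma~\ref{combing_independent_lemma}.
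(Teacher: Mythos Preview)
Your proposal is correct and follows essentially the same route as the paper's proof: reduce to weak combability of $c_\sigma$, use Lemma~\ref{uniform_estimate} plus the Morse Lemma to confine realizing paths to a bounded tube around the combing geodesic, record as the state a function on (ball of bounded radius) $\times$ (proper prefixes of $\sigma$) encoding the best value of $c_\sigma$ attainable by a good path with given endpoint and given greedy $\sigma$-suffix, and verify transitions exactly as in Lemma~\ref{combing_independent_lemma}. The one ingredient the paper makes explicit that you leave implicit is that the state must also track the relative word length $|\overline{w}_ib|-|\overline{w}_i|$ on the ball $B$, and that updating this datum along the combing can itself be done with finite memory --- this is Cannon's argument, invoked in the paper at the corresponding step.
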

\begin{proof}
Fix a hyperbolic group $G$ and a symmetric generating set $S$. Let $L$ be a
combing for $G$. Remember that this means that $L$ is a prefix-closed regular language
of geodesics in $G$ (with respect to the fixed generating set $S$) for which the
evaluation map is a bijection $L \to G$. 

Let $\sigma$ be a string, and $\phi_\sigma$ the associated counting quasimorphism. It suffices to show
that $c_\sigma$ is weakly combable. For the sake of convenience, abbreviate $c_\sigma$ to $c$
throughout this proof.

\medskip

Fix a word $w \in L$, and let $\gamma$ be the corresponding path in $C_S(G)$.
By Lemma~\ref{uniform_estimate}, a realizing path $\gamma'$ for $\overline{w}$ is a $K,\epsilon$ 
quasigeodesic, and therefore by the Morse Lemma, it satisfies the following two conditions (compare with
the proof of Lemma~\ref{combing_independent_lemma}):
\begin{enumerate}
\item{there is a constant $C_1$ so that every $\gamma(i)$ is within distance $C_1$ of some $\gamma'(j)$,
and every $\gamma'(j)$ is within distance $C_1$ of some $\gamma(i)$}
\item{there is a constant $C_2$ so that if $\gamma'(i)$ is within distance $C_1$ of $\gamma(j)$, then
$\gamma'(k)$ is within distance $C_1$ of $\gamma(j+1)$ for some $k$ with $|k-i|\le C_2$}
\end{enumerate}
Here distances are all measured in $C_S(G)$. Similarly, if $\alpha$ is a geodesic path, say that a
$K,\epsilon$ quasigeodesic path $\alpha'$ is good for $\alpha$ if they both start at $\id$, if their
endpoints are at most distance $C_1$ apart, and if they satisfy properties (1) and (2) above (with $\alpha$ and
$\alpha'$ in place of $\gamma$ and $\gamma'$).

Let $T$ denote the set of proper prefixes of $\sigma$. Given a (directed) 
path $\alpha \in C_S(G)$, first find all disjoint copies
of $\sigma$ in $\alpha$ obtained by the greedy algorithm, and let $p(\alpha) \in T$ be the biggest prefix of $\sigma$,
disjoint from these copies, that is a ``suffix'' of (the word corresponding to) $\alpha$.

Define a function $\mu_i$ as follows. Let $B$ denote the (set of vertices in the)
ball of radius $C_1$ in $C_S(G)$ about the identity. For each $i$, let $\mu_i$ be the function whose
domain is $B \times T$, and for each $b \in B$ and $t \in T$ define $\mu_i(b,t)$ as follows:
\begin{enumerate}
\item{if there is no good path $\alpha$ (for $\gamma_i$) with endpoint $\overline{w}_ib$ and $p(\alpha) = t$,
set $\mu_i(b,t)$ equal to an ``out of range'' symbol ${\tt{E}}$; otherwise:}
\item{set $\mu_i(b,t)$ equal to tuple whose first term is $|\overline{w}_ib|-|\overline{w}_i|$ (i.e.\ relative
distance to $\id$), and whose second
term is the difference $\nu_i(\overline{w}_ib,t) - c_\sigma(\overline{w}_i)$, where
$\nu_i(\overline{w}_ib,t)$ is the maximum of $c_\sigma$ on all good paths $\alpha$ as above with $p(\alpha) = t$}
\end{enumerate}
A good path ending at $\overline{w}_ib$ can be concatenated with a short path ending at $\overline{w}_i$ and
conversely, so there is a constant $C_3$ so that $\mu_i$ takes values in ${\tt{E}} \cup [-C_1,C_1]\times[-C_3,C_3]$, 
a finite set. Every good path ending at some $\overline{w}_{i+1}b'$ is obtained by concatenating a good path ending at
$\overline{w}_ib$ with a path of length at most $C_2$, and these can all be enumerated. The relative distance
to $\id$ on balls of uniform size can be determined as one moves along the geodesic $\gamma$,
by keeping track of only a bounded amount of data at each stage
(this is the heart of Cannon's argument that hyperbolic groups
are combable; for details see \cite{Cannon_automatic}). 
As in the proof of Lemma~\ref{combing_independent_lemma}, the function $\mu_i$ on $B \times T$
and the letter $w(i+1)$ together determine the function $\mu_{i+1}$, and the difference 
$c_\sigma(\overline{w}_{i+1})-c_\sigma(\overline{w}_i)$. From this it readily follows that $c_\sigma$ is
weakly combable, and (since it is evidently bilipschitz) therefore bicombable. The same is true of $c_{-\sigma}$,
and therefore of their difference $\phi_\sigma$.
\end{proof}

\section{Ergodic theory of bicombable functions}\label{ergodic_theory_section}

Let $G$ be a hyperbolic group with finite generating set $S$. Let $L$ be a combing
with respect to $S$, and $\Gamma$ a digraph parameterizing $L$. Graphs $\Gamma$
of this kind are not arbitrary, but are in fact quite constrained. The ergodic theory
of $\Gamma$ can be understood using the basic machinery of Markov chains. This material
is very well understood, but we keep the discussion self-contained and elementary
as far as possible. Basic references for this material include \cite{Berman_Plemmons}, 
\cite{Grinstead_Snell}, \cite{Kemeny_Snell}, \cite{Pollicott_Yuri} and \cite{Romanovskii}, 
and at a couple of points we refer the reader to these texts for proofs of standard facts.

\subsection{Almost semisimple directed graphs}

Let $\Gamma$ be a finite directed graph. Let $V$ be the real
vector space spanned by the vertices of $\Gamma$, and let $\langle\cdot,\cdot\rangle$
be the inner product on $V$ for which the vertices are an orthonormal basis.
We could and do think of $V$ as the space of real-valued functions on the vertices of $\Gamma$.
Let $V \otimes \C$ denote the corresponding vector space of complex-valued functions
on the vertices of $\Gamma$. Let $v_1$ denote the initial vertex, and enumerate the other
vertices somehow as $v_2,v_3,\cdots$.
For a vector $v \in V$, let $|v|$ denote the $L^1$ norm of $v$. That is,
$$|v| = \sum_i |\langle v,v_i\rangle|$$

\begin{definition}
The {\em transition matrix} of $\Gamma$ (also sometimes called the
{\em adjacency matrix}\/) is the $n \times n$
matrix $M$ whose $M_{ij}$ entry counts the number of directed edges from $v_i$ to $v_j$
(or is equal to $0$ if there are no such edges).
\end{definition}

\begin{lemma}\label{path_counting_lemma}
For any $v_i,v_j$ the number of directed paths in $\Gamma$ of length $n$
from $v_i$ to $v_j$ is $\langle v_i, M^n v_j \rangle = (M^n)_{ij}$.
\end{lemma}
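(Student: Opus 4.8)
The plan is to argue by induction on the path length $n$. First I would dispose of the identity $\langle v_i, M^n v_j\rangle = (M^n)_{ij}$: since the vertices $v_1,v_2,\dots$ form an orthonormal basis of $V$, applying the linear map $M^n$ to the basis vector $v_j$ yields the vector whose coefficient on $v_i$ is precisely the $(i,j)$ entry of the matrix $M^n$, so pairing with $v_i$ recovers $(M^n)_{ij}$. One must only fix, once and for all, the convention that $M$ acts on the left on column vectors and that $M_{ij}$ counts the directed edges from $v_i$ to $v_j$ (the convention of the preceding definition); with this understood, the content of the lemma is the combinatorial claim that $(M^n)_{ij}$ equals the number of directed paths of length $n$ from $v_i$ to $v_j$.

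For the base case I would take $n=0$: then $M^0$ is the identity matrix, so $(M^0)_{ij}=\delta_{ij}$, and indeed there is exactly one path of length $0$ from $v_i$ to $v_j$ when $i=j$ (the constant path) and none otherwise. The case $n=1$ is then nothing but the definition of the transition matrix, $M_{ij}$ being by construction the number of directed edges, i.e.\ of length-$1$ paths, from $v_i$ to $v_j$, counted with multiplicity in the presence of parallel edges.

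For the inductive step, assume the claim for $n-1$. Every directed path of length $n$ from $v_i$ to $v_j$ is obtained in a unique way by concatenating a directed path of length $n-1$ from $v_i$ to some intermediate vertex $v_k$ with a single directed edge from $v_k$ to $v_j$, and conversely every such concatenation is a directed path of length $n$ from $v_i$ to $v_j$. Summing over the intermediate vertex $v_k$ and invoking the inductive hypothesis together with the definition of $M$ gives that the number of length-$n$ paths from $v_i$ to $v_j$ equals $\sum_k (M^{n-1})_{ik} M_{kj} = (M^{n-1}M)_{ij} = (M^n)_{ij}$, as desired.

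There is no genuine obstacle here; this is a textbook fact. The only points requiring a modicum of care are the bookkeeping of the row/column convention, so that the matrix product in the inductive step is assembled in the correct order, and the observation that parallel edges must be counted with multiplicity for the path count and the matrix entry $M_{ij}$ to agree.
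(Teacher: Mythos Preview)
Your proof is correct and follows essentially the same induction as the paper's own argument, decomposing a length-$n$ path as a length-$(n-1)$ path followed by a single edge and summing over the intermediate vertex. The only differences are cosmetic: you include the $n=0$ base case and spell out the orthonormal-basis justification for $\langle v_i, M^n v_j\rangle = (M^n)_{ij}$, neither of which the paper bothers with.
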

\begin{proof}
This is true for $n=1$. Suppose by induction it is true for $n=m$. 
The number of paths of length $m+1$ from $v_i$ to $v_j$ is equal to
the sum over all $v_k$ of the number of paths of length $m$ from $v_i$ to $v_k$
times the number of paths of length $1$ from $v_k$ to $v_j$. By induction, this
is $\sum_k (M^m)_{ik} M_{kj} = (M^{m+1})_{ij}$.
\end{proof}

By Lemma~\ref{path_counting_lemma}, the number of paths in $\Gamma$ of length
$n$ starting at $v_1$ is $|(M^T)^nv_1|$, where $M^T$ denotes the transpose of $M$.
One could equally well adopt the notation $|v_1M^n|:=|(M^T)^nv_1|$ but we prefer the
more explicit notation (involving the transpose)
since there are many opportunities for left-right errors in what follows.

\begin{definition}\label{almost_definition}
A directed graph $\Gamma$ is {\em \almost} if it
satisfies the following properties.
\begin{enumerate}
\item{There is an {\em initial vertex} $v_1$}
\item{For every $i \ne 1$ there is a directed path in $\Gamma$ from $v_1$ to $v_i$}\label{outgoing_bullet}
\item{There are constants $\lambda > 1,K\ge 1$ so that 
$$K^{-1}\lambda^n \le |(M^T)^n v_1| \le K\lambda^n$$
for all positive integers $n$}\label{growth_bullet}
\end{enumerate}
\end{definition}

In what follows we will assume that $\Gamma$ is \almost.

\begin{lemma}\label{big_eigenvalues_diagonal}
Suppose $\Gamma$ is \almost. Then $\lambda$ is the largest real eigenvalue
of $M$. Moreover, for every eigenvalue $\xi$ of $M$ either $|\xi|<\lambda$
or else the geometric and the algebraic multiplicities of $\xi$ are equal.
\end{lemma}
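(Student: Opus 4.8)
The plan is to extract the claimed spectral information directly from the growth bound in part~(\ref{growth_bullet}) of the definition of \almost, using the Jordan form of $M$ (equivalently of $M^T$). First I would write $M^T = P J P^{-1}$ for some Jordan form $J$, so that $(M^T)^n = P J^n P^{-1}$ and hence the entries of $(M^T)^n$—and in particular the coordinates of $(M^T)^n v_1$—are finite linear combinations of terms of the shape $\binom{n}{k}\xi^{n-k}$, where $\xi$ ranges over the eigenvalues of $M$ and $k$ ranges from $0$ up to (one less than) the size of the corresponding Jordan block. The $L^1$ norm $|(M^T)^n v_1|$ is therefore comparable (up to dimension-dependent constants) to $\max$ over the eigenvalues $\xi$ appearing nontrivially in the expansion of $(M^T)^n v_1$ of the quantities $n^{k(\xi)}|\xi|^n$, where $k(\xi)$ records the largest Jordan block for $\xi$ that is actually ``seen'' by $v_1$.

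Now I would argue in two steps. For the upper bound direction: the hypothesis $|(M^T)^n v_1| \le K\lambda^n$ forces, for every eigenvalue $\xi$ that contributes at all to $(M^T)^n v_1$, that $n^{k(\xi)}|\xi|^n = O(\lambda^n)$; this immediately gives $|\xi| \le \lambda$, and moreover rules out $|\xi| = \lambda$ with $k(\xi) \ge 1$, i.e. any Jordan block of size $\ge 2$ associated to an eigenvalue of modulus exactly $\lambda$ is invisible to $v_1$. For the lower bound direction: the hypothesis $K^{-1}\lambda^n \le |(M^T)^n v_1|$ forces some eigenvalue $\xi$ with $n^{k(\xi)}|\xi|^n \gtrsim \lambda^n$ to contribute, hence some contributing $\xi$ has $|\xi| = \lambda$ and $k(\xi) = 0$. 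Combined with part~(\ref{outgoing_bullet}) of the definition—which says every vertex $v_i$ is accessible from $v_1$, so that, dually, $v_1$ ``spreads onto'' a $M^T$-cyclic-type subspace large enough that \emph{every} eigenvalue of $M$ and every Jordan block is seen by the forward orbit of $v_1$ (more precisely, the smallest $M^T$-invariant subspace containing $v_1$ has the same eigenvalues, with the same maximal block sizes, as $M$ itself, because accessibility from $v_1$ means $v_1$ is a cyclic-enough vector)—I would upgrade ``invisible to $v_1$'' to ``does not occur'': every eigenvalue of modulus $\lambda$ has all Jordan blocks of size $1$, which is precisely the statement that its geometric and algebraic multiplicities coincide. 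Finally, the existence of a contributing $\xi$ with $|\xi| = \lambda$, together with the fact that $\lambda$ itself is real and $> 1$ and dominates all eigenvalue moduli, plus the Perron--Frobenius-type observation that a real nonnegative matrix whose spectral radius is $\lambda$ has $\lambda$ among its eigenvalues, identifies $\lambda$ as the largest real eigenvalue of $M$.

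The step I expect to be the main obstacle is the passage from ``invisible to the forward orbit of $v_1$'' to ``genuinely absent from $M$'', which is where accessibility (bullet~\ref{outgoing_bullet}) must be used in an essential way. The subtle point is that a priori a large Jordan block of $M$ for an eigenvalue of modulus $\lambda$ could sit in a part of the graph not ``reached in a cyclic sense'' by $v_1$, even though every vertex is reachable by a path; one needs to argue that path-accessibility of every vertex from $v_1$ implies that the cyclic subspace generated by $v_1$ under $M^T$ already exhausts the relevant Jordan structure. I would handle this by a block-triangular decomposition of $\Gamma$ into its recurrent components (as set up in \S\ref{groups_automata_section}), observing that accessibility from $v_1$ means $v_1$ has nonzero ``mass'' flowing into every component, so each diagonal block's spectrum (and its block sizes, via the component's internal structure) is detected by $|(M^T)^n v_1|$; the growth bound then constrains each component's spectral radius to be $\le \lambda$, with equality allowed only for components contributing a simple dominant eigenvalue, and a standard argument about sums of such block contributions (no cancellation in $L^1$ norm, since the relevant leading coefficients are nonnegative) rules out any size-$\ge 2$ Jordan block at modulus $\lambda$ arising either within a component or across the block-triangular gluing.
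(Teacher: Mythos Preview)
Your overall strategy works, but the paper bypasses the obstacle you flag with a much shorter argument. Rather than showing that every Jordan block of $M^T$ is visible to $v_1$ --- which, as you correctly anticipate, is the delicate point, and your parenthetical about the $M^T$-cyclic subspace of $v_1$ inheriting all eigenvalues and maximal block sizes of $M^T$ would itself require a separate proof --- the paper turns accessibility around. Since each $v_i$ is reachable from $v_1$ by some path of length $m_i$, every length-$n$ path starting at $v_i$ prolongs to a length-$(n+m_i)$ path starting at $v_1$, giving $|(M^T)^n v_i|\le |(M^T)^{n+m_i}v_1|\le K\lambda^{m_i}\lambda^n$; since the $v_i$ span $V$, this yields $|(M^T)^n w|\le C\,|(M^T)^n v_1|$ for \emph{every} unit $w\in V\otimes\C$. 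One then simply takes $w$ to be a vector in a size-$k$ Jordan block for $\xi$, so that $|(M^T)^n w|\ge\text{const}\cdot n^{k-1}|\xi|^n$, and reads off $|\xi|<\lambda$ or $k=1$ directly; Perron--Frobenius then identifies $\lambda$ as the largest real eigenvalue. Your component-by-component block-triangular route is not wrong --- it is essentially the argument behind the later Lemma~\ref{support_of_measure} --- but here it is more machinery than needed, and the ``no cancellation across the gluing'' step would still have to be spelled out carefully for all eigenvalues on the circle $|\xi|=\lambda$.
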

\begin{proof}
It is convenient to work with $M^T$ in place of $M$. To prove the lemma, it suffices to
prove analogous facts about the matrix $M^T$.
Corresponding to the Jordan decomposition of $M^T$ over $\C$, let $\xi_1,\dots,\xi_m$
be the eigenvalues of the corresponding Jordan blocks (listed with multiplicity).
We consider $V \otimes \C$ with the hermitian inner product.

Bullet~(\ref{outgoing_bullet}) from Definition~\ref{almost_definition} implies that
for any $v_i$, there is an inequality $|(M^T)^nv_i| \le C_i|(M^T)^nv_1|$ for
some constant $C_i$. Since the $v_i$ span $V$, and since $V$ is finite dimensional,
there is a constant $C$ such that for all $w \in V$ with $|w|=1$
there is an inequality $|(M^T)^nw| \le C|(M^T)^nv_1|$.
This is true if $w \in V \otimes \C$ as well.

For each $i$, there is some $w_i \in V \otimes \C$ in the 
$\xi_i$-eigenspace for which
$$|(M^T)^nw_i| \ge \text{constant}\cdot n^{k-1}|\xi_i|^n$$
where $k$ is the dimension of the Jordan block associated to $\xi_i$.
Since $|(M^T)^nw_i| \le C|(M^T)^nv_1|$,
by bullet~(\ref{growth_bullet}) from Definition~\ref{almost_definition},
either $|\xi_i|<\lambda$ or $|\xi_i|=\lambda$ and $k=1$.

By the Perron-Frobenius theorem for non-negative matrices (see e.g. \cite{Pollicott_Yuri}, \S~3.2 pp.~23--26), 
$M$ has a largest real eigenvalue $\lambda'$ such that $|\xi|\le \lambda'$ for all eigenvalues
$\xi$. We must have $\lambda' = \lambda$ by the estimates above.
\end{proof}

Note that $M$ and $M^T$ have the same set of eigenvalues, with the same multiplicities. Since every
eigenvalue of $M$ with largest (absolute) value has geometric multiplicity equal to its algebraic multiplicity,
the same is true of $M^T$.

For any vector $v \in V$,
decompose $v = \sum_\xi v(\xi)$ into the components in the generalized
eigenspaces of the eigenvalues $\xi$. 
Note that $(M v)(\xi)=M(v(\xi))$, so we write it
as $M v (\xi)$. Since any two norms on
$V \otimes \C$ are equivalent, there is a constant
$K>1$ such that
$$K^{-1} \le \frac {|M^nv|} {\sum_\xi|M^nv(\xi)|} \le K$$
providing $M^n v \ne 0$.

\begin{lemma}\label{limit_exists}
For any vector $v \in V$,
the following limit
$$\rho(v) := \lim_{n \to \infty} n^{-1} \sum_{i = 0}^{n} \lambda^{-i} M^i v$$
exists and is equal to $v(\lambda) \in V$.
\end{lemma}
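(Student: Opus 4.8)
The plan is to decompose $v$ into its generalized eigenspace components and compute the Cesàro average term by term. Write $v = \sum_\xi v(\xi)$ where the sum is over eigenvalues $\xi$ of $M$, and $v(\xi)$ lies in the generalized eigenspace $V_\xi$. Since each $V_\xi$ is $M$-invariant, we have $\lambda^{-i}M^i v = \sum_\xi \lambda^{-i}M^i v(\xi)$, and so
$$n^{-1}\sum_{i=0}^n \lambda^{-i}M^i v = \sum_\xi n^{-1}\sum_{i=0}^n \lambda^{-i}M^i v(\xi).$$
Because there are finitely many eigenvalues, it suffices to analyze each summand $n^{-1}\sum_{i=0}^n \lambda^{-i}M^i v(\xi)$ separately and show it converges to $v(\xi)$ when $\xi = \lambda$ and to $0$ otherwise.

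The case analysis splits according to the size of $\xi$. First, if $\xi = \lambda$: by Lemma~\ref{big_eigenvalues_diagonal} the eigenvalue $\lambda$ has equal geometric and algebraic multiplicities, so $M$ acts on $V_\lambda$ as the scalar $\lambda$; thus $\lambda^{-i}M^i v(\lambda) = v(\lambda)$ for every $i$, and $n^{-1}\sum_{i=0}^n v(\lambda) = \frac{n+1}{n}v(\lambda) \to v(\lambda)$. Second, if $|\xi| < \lambda$ (including $\xi = 0$): on $V_\xi$ we may write $M = \xi I + N$ with $N$ nilpotent, so $M^i v(\xi)$ is a vector whose norm grows at most polynomially times $|\xi|^i$, hence $\|\lambda^{-i}M^i v(\xi)\| \le C\, i^{d}(|\xi|/\lambda)^i$ for some constants $C, d$; since $|\xi|/\lambda < 1$ this is summable, so $\sum_{i=0}^\infty \lambda^{-i}M^i v(\xi)$ converges absolutely, and dividing the partial sums by $n$ forces the Cesàro average to $0$. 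Third — and this is the only delicate case — if $|\xi| = \lambda$ but $\xi \ne \lambda$: again by Lemma~\ref{big_eigenvalues_diagonal} such $\xi$ is semisimple, so $M$ acts on $V_\xi$ as the scalar $\xi$ and $\lambda^{-i}M^i v(\xi) = (\xi/\lambda)^i v(\xi)$ with $|\xi/\lambda| = 1$ but $\xi/\lambda \ne 1$. The partial sums $\sum_{i=0}^n (\xi/\lambda)^i$ are then a geometric series equal to $\frac{(\xi/\lambda)^{n+1}-1}{(\xi/\lambda)-1}$, which stays bounded (in absolute value by $2/|(\xi/\lambda)-1|$), so dividing by $n$ again yields $0$.

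The main obstacle is the third case: one must genuinely use the semisimplicity conclusion of Lemma~\ref{big_eigenvalues_diagonal} for eigenvalues on the critical circle $|\xi| = \lambda$. Were there a nontrivial Jordan block there, the partial sums $\sum_{i \le n} (\xi/\lambda)^i(\text{polynomial in }i)$ could grow like a positive power of $n$, and the Cesàro normalization $n^{-1}$ would not kill them — indeed the stated limit would be false. So the proof is really an application of that lemma together with the elementary observation that a bounded sequence of partial sums has Cesàro average tending to zero. Summing the three cases gives $\rho(v) = v(\lambda)$ as claimed; the existence of the limit is part of the same computation.
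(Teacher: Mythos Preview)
Your proof is correct and follows essentially the same approach as the paper: decompose $v$ into generalized eigenspace components, invoke Lemma~\ref{big_eigenvalues_diagonal} to ensure semisimplicity on the circle $|\xi|=\lambda$, and treat the three cases $\xi=\lambda$, $|\xi|<\lambda$, $|\xi|=\lambda$ with $\xi\ne\lambda$ separately. Your argument is in fact slightly more explicit than the paper's (you write out the geometric series bound where the paper appeals to equidistribution on the unit circle), but the content is the same.
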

\begin{proof}
We suppress $v$ in the notation that follows.
For each eigenvalue $\xi$ define
$$\rho_n(\xi) = n^{-1} \sum_{i = 0}^{n} \lambda^{-i} M^i v(\xi)$$
and set $\rho_n = \sum_{\xi} \rho_n(\xi)$. With this notation, 
$\rho = \lim_{n \to \infty} \rho_n$,
and we want to show that this limit exists, and is equal to $v(\lambda)$.

By Lemma~\ref{big_eigenvalues_diagonal},
for each $\xi$, either $|\xi|<\lambda$ or $v(\xi)$ is
a $\xi$-eigenvector. In the first case, $\rho_n(\xi) \to 0$. In the second case,
either $\xi = \lambda$, or else the vectors $\lambda^{-i}M^iv(\xi)$ become equidistributed
in the unit circle in the complex line of $V\otimes \C$ spanned by $v(\xi)$.
It follows that $\rho_n(\xi) \to 0$ unless $\xi = \lambda$.
So $n^{-1}\sum_{0 \le i \le n} \lambda^{-i}M^iv(\xi) \to 0$ unless
$\xi=\lambda$, in which case $\rho_n(\lambda) = v(\lambda)$ is constant. 
\end{proof}

The same argument as Lemma~\ref{limit_exists} implies

\begin{lemma}\label{transpose_limit_exists}
For any vector $v \in V$, the following limit
$$\ell(v): = \lim_{n \to \infty} n^{-1} \sum_{i = 0}^{n} \lambda^{-i} (M^T)^i v$$
exists, and is equal to the projection of $v$ onto the left $\lambda$-eigenspace of $M$.
\end{lemma}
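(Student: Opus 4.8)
The plan is to run the proof of Lemma~\ref{limit_exists} verbatim with $M^T$ in place of $M$, and then translate the resulting projection into the language of left eigenvectors. First I would check that the structural input from Lemma~\ref{big_eigenvalues_diagonal} transfers to $M^T$. Since $M$ and $M^T$ have the same characteristic polynomial, they share the same eigenvalues with the same algebraic multiplicities, and (as observed in the remark immediately following the proof of Lemma~\ref{big_eigenvalues_diagonal}) every eigenvalue $\xi$ of $M^T$ with $|\xi| = \lambda$ has equal geometric and algebraic multiplicity, so all Jordan blocks of $M^T$ at such $\xi$ are $1 \times 1$. In particular the component $v(\xi)$ of $v$ in the generalized $\xi$-eigenspace of $M^T$ is a genuine $\xi$-eigenvector of $M^T$ whenever $|\xi| = \lambda$.

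Next, decompose $v = \sum_\xi v(\xi)$ over the generalized eigenspaces of $M^T$ and set, exactly as in Lemma~\ref{limit_exists}, $\ell_n(\xi) = n^{-1}\sum_{i=0}^n \lambda^{-i}(M^T)^i v(\xi)$ and $\ell_n = \sum_\xi \ell_n(\xi)$. For $|\xi| < \lambda$ the vectors $\lambda^{-i}(M^T)^i v(\xi)$ decay geometrically (the generalized eigenspace contributes at most a polynomial factor in $i$ against the exponential $(|\xi|/\lambda)^i$), so $\ell_n(\xi) \to 0$. For $|\xi| = \lambda$ with $\xi \ne \lambda$ we have $\lambda^{-i}(M^T)^i v(\xi) = (\xi/\lambda)^i v(\xi)$ with $\xi/\lambda$ on the unit circle and $\ne 1$, so $\sum_{i=0}^n (\xi/\lambda)^i$ is bounded (it equals $(( \xi/\lambda)^{n+1}-1)/((\xi/\lambda)-1)$) and hence $\ell_n(\xi) \to 0$. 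For $\xi = \lambda$ we have $\lambda^{-i}(M^T)^i v(\lambda) = v(\lambda)$ for every $i$, so $\ell_n(\lambda) = \frac{n+1}{n} v(\lambda) \to v(\lambda)$. Summing, $\ell(v) = \lim_n \ell_n = v(\lambda)$, the component of $v$ in the (by the previous paragraph, genuine) $\lambda$-eigenspace of $M^T$ relative to the direct sum decomposition of $V \otimes \C$ into generalized eigenspaces of $M^T$; since $v$ and $M^T$ are real and $\lambda \in \R$ (Perron--Frobenius), this component lies in $V$.

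Finally I would identify $v(\lambda)$ with the asserted projection. A vector $w$ satisfies $M^T w = \lambda w$ if and only if $w^T M = \lambda w^T$, i.e. if and only if $w$ is a left $\lambda$-eigenvector of $M$; and the splitting of $V \otimes \C$ by generalized eigenspaces of $M^T$ is exactly the splitting by generalized left-eigenspaces of $M$. Hence $v(\lambda)$ is precisely the projection of $v$ onto the left $\lambda$-eigenspace of $M$ along the other generalized left-eigenspaces, which is the claim. I do not anticipate a genuine obstacle: the only points requiring care are confirming that the eigenvalue and Jordan-block structure used in Lemma~\ref{limit_exists} is invariant under transpose (which is the remark following Lemma~\ref{big_eigenvalues_diagonal}) and keeping straight that ``projection onto the left $\lambda$-eigenspace'' is meant with respect to this generalized-eigenspace decomposition.
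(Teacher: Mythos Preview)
Your proposal is correct and is exactly the approach the paper takes: the paper simply states that the same argument as Lemma~\ref{limit_exists} applies, and you have carefully filled in the details of that argument for $M^T$, including the identification of the $\lambda$-eigenspace of $M^T$ with the left $\lambda$-eigenspace of $M$.
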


For any $v_i$, the partial sums $\rho_n(v_i)$ are non-negative real vectors 
so if $v$ is non-negative and real, so are $\rho(v)$ and $\ell(v)$. 

\begin{lemma}\label{transfer_formula}
For any $v,w \in V$, there is an identity
$$\langle \ell(v), w \rangle =  \langle \ell(v), \rho(w) \rangle 
= \langle v, \rho(w) \rangle$$
\end{lemma}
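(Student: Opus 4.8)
The plan is to prove the two asserted equalities separately, in both cases by unwinding the Cesàro-average descriptions of $\ell$ and $\rho$ supplied by Lemmas~\ref{transpose_limit_exists} and \ref{limit_exists}. First I would note that for $v,w \in V$ both $\ell(v)$ and $\rho(w)$ again lie in $V$: each is the limit of a sequence of real linear combinations of the vectors $(M^T)^i v$, resp.\ $M^i w$, and $M$ and $\lambda$ are real. Hence all pairings below may be taken in the real inner product on $V$, for which $\langle M^T x,y\rangle=\langle x,My\rangle$ — and therefore $\langle (M^T)^i x,y\rangle=\langle x,M^i y\rangle$ for every $i$ — holds trivially. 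Since $V$ is finite dimensional, $\langle\cdot,\cdot\rangle$ is continuous in each variable, so the limit defining $\ell(v)$ may be pulled out of the first slot:
$$\langle \ell(v),w\rangle=\Big\langle \lim_{n\to\infty}\tfrac1n\sum_{i=0}^{n}\lambda^{-i}(M^T)^i v,\ w\Big\rangle=\lim_{n\to\infty}\tfrac1n\sum_{i=0}^{n}\lambda^{-i}\langle (M^T)^i v,w\rangle=\lim_{n\to\infty}\tfrac1n\sum_{i=0}^{n}\lambda^{-i}\langle v,M^i w\rangle.$$
By Lemma~\ref{limit_exists} the final expression equals $\langle v,\rho(w)\rangle$, which establishes the rightmost equality $\langle\ell(v),w\rangle=\langle v,\rho(w)\rangle$.

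For the first equality $\langle\ell(v),w\rangle=\langle\ell(v),\rho(w)\rangle$, I would use the extra information in Lemma~\ref{transpose_limit_exists} that $\ell(v)$ is an honest left $\lambda$-eigenvector of $M$, i.e.\ $M^T\ell(v)=\lambda\ell(v)$, hence $(M^T)^i\ell(v)=\lambda^i\ell(v)$. Then for every $i$,
$$\langle\ell(v),\lambda^{-i}M^i w\rangle=\lambda^{-i}\langle (M^T)^i\ell(v),w\rangle=\lambda^{-i}\langle\lambda^i\ell(v),w\rangle=\langle\ell(v),w\rangle.$$
Averaging this over $i=0,\dots,n$, dividing by $n$, and letting $n\to\infty$, the right-hand side stays equal to the constant $\langle\ell(v),w\rangle$, while the left-hand side converges, by Lemma~\ref{limit_exists} and continuity of the pairing, to $\langle\ell(v),\rho(w)\rangle$; so the two agree. (Equivalently, $w-\rho(w)=\sum_{\xi\ne\lambda}w(\xi)$ lies in the span of the generalized eigenspaces of $M$ for the eigenvalues $\xi\ne\lambda$, and moving powers of $(M-\xi)$ across the pairing against the left $\lambda$-eigenvector $\ell(v)$ forces $\langle\ell(v),w(\xi)\rangle=0$ for each such $\xi$.)

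I do not expect a genuine obstacle. The only points needing a word of care are: confirming $\ell(v),\rho(w)\in V$, so that the plain real inner product and the adjunction $\langle M^T x,y\rangle=\langle x,My\rangle$ are all that is needed (no hermitian bookkeeping); and the interchange of the Cesàro limits with the bilinear pairing, which is immediate from finite dimensionality and bilinearity. Everything else is a direct consequence of Lemmas~\ref{transpose_limit_exists} and \ref{limit_exists}.
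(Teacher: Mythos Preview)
Your argument is correct. The paper, however, takes a shorter and more structural route: it observes that $\rho$ is the linear projection $\pi$ onto the $\lambda$-eigenspace (Lemma~\ref{limit_exists}) and that $\ell=\pi^T$ (Lemma~\ref{transpose_limit_exists}), and then the whole lemma is the one-line chain
$$\langle \pi^T v,w\rangle=\langle \pi^T\pi^T v,w\rangle=\langle \pi^T v,\pi w\rangle=\langle v,\pi\pi w\rangle=\langle v,\pi w\rangle,$$
using only idempotency $\pi^2=\pi$ and the adjunction $\langle \pi^T x,y\rangle=\langle x,\pi y\rangle$. Your approach unwinds essentially the same identity at the level of Ces\`aro sums: the equality $\langle\ell(v),w\rangle=\langle v,\rho(w)\rangle$ is exactly the adjunction $\ell=\rho^T$ read off term by term, and your use of $M^T\ell(v)=\lambda\ell(v)$ to fix the pairing under averaging is what becomes the idempotency $\pi^T\pi^T=\pi^T$ in the paper. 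The trade-off is that the paper's proof is cleaner once one accepts that $\rho$ is a genuine projection operator and $\ell$ its transpose, while yours is more self-contained from the limit definitions and makes the role of the eigenvector property explicit. One cosmetic quibble: when you say ``the right-hand side stays equal to the constant $\langle\ell(v),w\rangle$'', the average of $n+1$ copies divided by $n$ is actually $\tfrac{n+1}{n}\langle\ell(v),w\rangle$; of course this still tends to the right limit.
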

\begin{proof}
Let $\pi$ denote projection onto the (right) $\lambda$-eigenspace, so $\pi v = \rho(v)$ and
$\ell(v) = \pi^Tv$. Since $\pi$ is a projection, it is idempotent; i.e. $\pi \circ \pi = \pi$,
and similarly for $\pi^T$.
Hence
$$\langle\pi^T v,w\rangle = \langle \pi^T \pi^T v,w\rangle = \langle \pi^T v, \pi w \rangle 
= \langle v, \pi \pi w \rangle = \langle v, \pi w \rangle$$
\end{proof}

\subsection{Stationary Markov chains}\label{stationary_markov_chain_subsection}

We would like to be able to discuss the {\em typical} behavior of an infinite
(or sufficiently long) path in $\Gamma$. The standard way to do this
is to use the machinery of {\em Markov chains}. A {\em stationary Markov chain} is
a random process $x_0, x_1,x_2,\cdots$ where each $x_i$ is in one of a finite
set of states $V$. The process starts in one state, and successively moves
from one state to another. If the chain is in some state $v_i \in V$ then
the probability that it moves to state $v_j \in V$ at the next step is some
number $N_{ij}$ which depends only on $i$ and $j$; in other words, given
the present state, the future states are independent of the past states. The
starting state is determined by some initial probability distribution $\mu$.
See \cite{Grinstead_Snell} Chapter~11 for an introduction to the theory
of Markov chains.

\medskip

Our digraph $\Gamma$ determines a stationary Markov chain as follows. 
For each $n$, let $X_n$ denote the set of paths in $\Gamma$ of length $n$, starting at an
arbitrary vertex, and $Y_n$ the set of paths of length $n$ starting at the initial vertex.
A path of length $0$ is a vertex.
Restricting to a prefix defines surjections $X_{n+1} \to X_n$ and $Y_{n+1} \to Y_n$ for
all $n$. Giving each $X_n,Y_n$ the discrete topology, we obtain inverse limits $X$ and $Y$
which denote the space of infinite
paths starting at an arbitrary vertex, and infinite paths starting at the initial vertex
respectively. 

Topologically, $X$ and $Y$ are Cantor sets. The projections $X \to X_n$ and $Y \to Y_n$ pull back
points to open subsets called {\em cylinders}, which generate the Borel $\sigma$-algebras
of $X$ and $Y$.

\begin{definition}
The {\em shift map} $S:X \to X$ takes an infinite path to its suffix which is the
complement of the initial vertex.
\end{definition}

It is bothersome to use the same letter $S$ both for the shift map and for a generating set
for $G$, but (unfortunately) both notations are standard, and in any case the meaning should always be clear from context.

Let $\1$ denote the constant function on $\Gamma$ taking the value $1$ on every vertex.
Define a matrix $N$ by
$$N_{ij} = \frac {M_{ij}\rho(\1)_j} {\lambda \rho(\1)_i}$$
if $\rho(\1)_i \ne 0$ and otherwise define $N_{ii}=1$ and $N_{ij} = 0$ for $i \ne j$.
Define a measure $\mu$ on the vertices of $\Gamma$ by
$$\mu_i = \rho(\1)_i\ell(v_1)_i$$
where subscripts denote vector and matrix components.

\begin{lemma}\label{basic_properties}
The matrix $N$ is a stochastic matrix (i.e. it is non-negative, and for all $i$
it satisfies $\sum_j N_{ij}=1$) and preserves the measure $\mu$.
\end{lemma}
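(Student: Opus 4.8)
The plan is to verify the two assertions — stochasticity of $N$ and $\mu$-invariance of $N$ — by direct computation, using the eigenvector identities satisfied by $\rho(\1)$ and $\ell(v_1)$. The key algebraic facts I would extract first are: (i) $\rho(\1)$ is a non-negative right $\lambda$-eigenvector of $M$, so $M\rho(\1) = \lambda\rho(\1)$, i.e. $\sum_j M_{ij}\rho(\1)_j = \lambda\rho(\1)_i$ for every $i$; and (ii) $\ell(v_1)$ is a non-negative left $\lambda$-eigenvector, so $(M^T)\ell(v_1) = \lambda\ell(v_1)$, i.e. $\sum_i \ell(v_1)_i M_{ij} = \lambda\ell(v_1)_j$ for every $j$. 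Both follow from Lemmas~\ref{limit_exists} and~\ref{transpose_limit_exists} together with the fact that partial sums of non-negative vectors are non-negative. I would also record the elementary observation that $\rho(\1)_i = 0$ forces $N$ to act as the identity on the $i$th coordinate by fiat, so the only content is on the ``support'' $\Sigma := \{i : \rho(\1)_i \ne 0\}$.

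For stochasticity, first note $N_{ij} \ge 0$ since $M_{ij} \ge 0$, $\rho(\1)_j \ge 0$, $\lambda > 0$, and $\rho(\1)_i > 0$ on $\Sigma$ (the off-support rows are trivially non-negative). For the row sums, fix $i \in \Sigma$ and compute
$$\sum_j N_{ij} = \frac{1}{\lambda \rho(\1)_i}\sum_j M_{ij}\rho(\1)_j = \frac{\lambda\rho(\1)_i}{\lambda\rho(\1)_i} = 1$$
using (i); for $i \notin \Sigma$ the row sum is $1$ by definition. There is one subtlety worth addressing: I should check that the Markov chain does not leak probability off $\Sigma$, i.e. that $M_{ij} = 0$ whenever $i \in \Sigma$ and $j \notin \Sigma$, so that the formula above is consistent. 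This holds because if $\rho(\1)_i > 0$ and $M_{ij} > 0$ then $\rho(\1)_j \ge \lambda^{-1}M_{ij}\rho(\1)_i > 0$ by (i) again (all terms non-negative). Hence $\Sigma$ is forward-invariant and $N$ is genuinely stochastic.

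For $\mu$-invariance, I must show $\sum_i \mu_i N_{ij} = \mu_j$ for all $j$, where $\mu_i = \rho(\1)_i \ell(v_1)_i$. For $j \in \Sigma$, only $i \in \Sigma$ contribute on the left (if $i \notin \Sigma$ then $\mu_i = 0$), so
$$\sum_i \mu_i N_{ij} = \sum_{i \in \Sigma} \rho(\1)_i \ell(v_1)_i \cdot \frac{M_{ij}\rho(\1)_j}{\lambda\rho(\1)_i} = \frac{\rho(\1)_j}{\lambda}\sum_i \ell(v_1)_i M_{ij} = \frac{\rho(\1)_j}{\lambda}\cdot\lambda\ell(v_1)_j = \mu_j$$
by (ii), where I have used that $\ell(v_1)_i M_{ij} = 0$ for $i \notin \Sigma$ by the leak-freeness established above. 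For $j \notin \Sigma$ we have $\mu_j = 0$, and on the left the $i = j$ term contributes $\mu_j N_{jj} = 0$ while any $i \in \Sigma$ term has $M_{ij} = 0$ by forward-invariance of $\Sigma$, and any $i \notin \Sigma$ term has $\mu_i = 0$; so the sum is $0 = \mu_j$.

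The main obstacle is not any single computation — each is a one-line eigenvector manipulation — but rather the careful bookkeeping around vertices where $\rho(\1)$ vanishes, i.e. ensuring that the ad hoc definition of $N$ on the off-support rows is compatible with stochasticity and stationarity, and that mass never flows from $\Sigma$ into its complement. The forward-invariance of $\Sigma$ (proved from the non-negativity in the eigenvector identity $M\rho(\1) = \lambda\rho(\1)$) is the one genuinely necessary observation that makes all the sums restrict cleanly; once that is in hand the rest is bookkeeping.
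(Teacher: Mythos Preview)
Your proof follows the paper's approach --- direct verification via the eigenvector identities --- and is commendably more careful than the paper about vertices outside $\Sigma = \{i : \rho(\1)_i \ne 0\}$. But your ``forward-invariance'' claim is false, and the argument you give for it misreads identity (i). From $\sum_k M_{ik}\rho(\1)_k = \lambda\rho(\1)_i$ with all terms non-negative one deduces $M_{ij}\rho(\1)_j \le \lambda\rho(\1)_i$, an \emph{upper} bound on $\rho(\1)_j$, not the lower bound $\rho(\1)_j \ge \lambda^{-1}M_{ij}\rho(\1)_i$ that you wrote. A concrete counterexample: take vertices $v_1,v_2,v_3$ with one edge $v_1\to v_2$, two edges $v_2\to v_2$, one edge $v_2\to v_3$, one edge $v_3\to v_3$; then $\lambda=2$ and $\rho(\1)=(1,2,0)$, so $2\in\Sigma$ and $M_{23}=1>0$ yet $3\notin\Sigma$.

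What is actually true, and what you actually need, is the reverse direction: if $i\notin\Sigma$ then $\lambda\rho(\1)_i=0$, so $\sum_k M_{ik}\rho(\1)_k=0$, whence by non-negativity $M_{ij}=0$ for every $j\in\Sigma$. In other words the \emph{complement} of $\Sigma$ is forward-invariant under $M$. This is precisely the fact that lets you extend $\sum_{i\in\Sigma}\ell(v_1)_iM_{ij}$ to the full sum $\sum_i\ell(v_1)_iM_{ij}$ in your $j\in\Sigma$ computation (the missing terms have $M_{ij}=0$, not because $i\in\Sigma$ forces anything, but because $i\notin\Sigma$ and $j\in\Sigma$ do). For the $j\notin\Sigma$ case you invoke forward-invariance again to get $M_{ij}=0$ for $i\in\Sigma$, but this is both false and unnecessary: for such $i$ one has $N_{ij}=M_{ij}\rho(\1)_j/(\lambda\rho(\1)_i)=0$ directly from $\rho(\1)_j=0$, regardless of $M_{ij}$. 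With these two corrections your argument is complete and matches the paper's (which, to be fair, silently glosses over exactly these points).
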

\begin{proof}
For any $i$ not in the support of $\rho(\1)$ we have $\sum_j N_{ij} = 1$ by fiat.
Otherwise we have
$$\sum_j N_{ij} = \sum_j \frac {M_{ij} \rho(\1)_j} {\lambda \rho(\1)_i} = \frac {(M\rho(\1))_i}
{\lambda \rho(\1)_i} = 1$$
This shows $N$ is a stochastic matrix. To see that it preserves $\mu$, we calculate
\begin{align*}
\sum_i \mu_iN_{ij} &= \sum_i \rho(\1)_i\ell(v_1)_i \frac {M_{ij} \rho(\1)_j} {\lambda \rho(\1)_i} \\
&= \frac 1 \lambda \rho(\1)_j \sum_i \ell(v_1)_i M_{ij} \\
&= \rho(\1)_j \ell(v_1)_j = \mu_j
\end{align*}
Hence $N$ preserves $\mu$.
\end{proof}

We scale $\mu$ to be a probability measure. By abuse of notation, we also denote this
probability measure by $\mu$. This probability measure is preserved by $N$.

There is an associated probability measure on each $X_n$, which we also denote by $\mu$, where
$$\mu(v_{i_0}v_{i_1}\cdots v_{i_n}) = \mu(v_{i_0})N_{i_0i_1}N_{i_1i_2}\cdots N_{i_{n-1}i_n}$$
This defines a measure on each cylinder (i.e. preimage of an element of $X_n$ under $X \to X_n$)
and therefore a probability measure $\mu$ on $X$.
Lemma~\ref{basic_properties} implies that $\mu$ on $X$
is invariant under the shift map $S$; i.e. $\mu(A) = \mu(S^{-1}(A))$ for all measurable
$A \subset X$. We call $\mu$ the {\em stationary measure}.

\begin{lemma}\label{support_of_measure}
Suppose $\Gamma$ is almost semisimple, and $\mu$ is the stationary measure on $\Gamma$ for $N$.
The support of $\mu$ is equal to the union of the maximal recurrent components of $\Gamma$
whose adjacency matrix has biggest eigenvalue $\lambda$. Furthermore, if $C,C'$ are distinct
components in the support of $\mu$, there is no directed path from $C$ to $C'$
\end{lemma}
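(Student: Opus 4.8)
The plan is to analyze the stationary measure $\mu$ directly from its defining formula $\mu_i = \rho(\1)_i\,\ell(v_1)_i$ (before rescaling), so the support of $\mu$ is exactly the set of vertices $v_i$ with $\rho(\1)_i \ne 0$ \emph{and} $\ell(v_1)_i \ne 0$. First I would unpack $\rho(\1) = \1(\lambda)$, the projection of $\1$ onto the right $\lambda$-eigenspace: since $\1$ is a non-negative vector and $M$ is non-negative, $\rho(\1)$ is a non-negative real vector, and by Lemma~\ref{limit_exists} it equals $\lim_n n^{-1}\sum_{i\le n}\lambda^{-i}M^i\1$. Reading off components, $\rho(\1)_j$ is (a Ces\`aro limit of a rescaling of) the number of paths of length $i$ starting at $v_j$; hence $\rho(\1)_j > 0$ precisely when the number of length-$n$ paths out of $v_j$ grows like $\lambda^n$. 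I would argue this happens exactly when $v_j$ can reach a recurrent component whose adjacency submatrix has spectral radius $\lambda$: if $v_j$ reaches such a component then long paths through it give the full $\lambda^n$ growth, while if every component accessible from $v_j$ has spectral radius strictly below $\lambda$, then (by a standard estimate on sums of products of subcritical powers along the finitely many component-to-component routes in the acyclic graph $C(\Gamma)$) the path count out of $v_j$ is $o(\lambda^n)$, forcing $\rho(\1)_j = 0$. Dually, using Lemma~\ref{transpose_limit_exists}, $\ell(v_1)_i$ is the projection of $v_1$ onto the \emph{left} $\lambda$-eigenspace, and its $i$th component is (a Ces\`aro rescaling of) the number of length-$n$ paths from $v_1$ to $v_i$; the same dichotomy shows $\ell(v_1)_i > 0$ exactly when $v_i$ is accessible from $v_1$ via a long path, i.e.\ when $v_i$ lies in or beyond a $\lambda$-component reachable from $v_1$.

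Next I would combine these two one-sided characterizations. A vertex $v_i$ is in the support of $\mu$ iff it is downstream of $v_1$ through a $\lambda$-component \emph{and} upstream of a $\lambda$-component. By almost semisimplicity, bullet~(\ref{outgoing_bullet}) of Definition~\ref{almost_definition} says every vertex is accessible from $v_1$, so the first condition is automatic once we know the relevant $\lambda$-component lies on a $v_1$-path — but in fact accessibility from $v_1$ holds for all vertices, so the binding constraint is really the conjunction "$v_i$ can reach a $\lambda$-component" (from $\rho(\1)_i\ne 0$) and "$v_i$ is reachable from a $\lambda$-component or is in one" (from $\ell(v_1)_i\ne 0$). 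I claim this conjunction holds iff $v_i$ itself lies in a maximal recurrent component whose adjacency matrix has biggest eigenvalue $\lambda$. Indeed, if $v_i$ lies in such a component $C$, then $v_i$ reaches $C$ (itself) and is reached from $C$, so both components are nonzero. Conversely, suppose $v_i$ reaches a $\lambda$-component $C_2$ and is reached from a $\lambda$-component $C_1$ (allowing $v_i \in C_1$ or $v_i \in C_2$). If $C_1 \ne C_2$ there is a directed path $C_1 \to v_i \to C_2$ in $\Gamma$; comparing growth rates, a recurrent component has spectral radius at most $\lambda$ and the number of length-$n$ paths from $v_1$ that pass through both $C_1$ and $C_2$ would then grow like $n\lambda^n$ (spending order-$n$ time in each), contradicting bullet~(\ref{growth_bullet}) which caps the total growth at $K\lambda^n$. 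Hence $C_1 = C_2 =: C$, and $v_i$ is sandwiched between $C$ and $C$, so $v_i \in C$ by maximality of recurrent components. This proves the support of $\mu$ is exactly the union of the $\lambda$-components.

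For the final sentence I would argue by the same growth-rate bookkeeping. Suppose $C, C'$ are distinct components in the support of $\mu$ (so both have biggest eigenvalue $\lambda$) and there were a directed path from $C$ to $C'$. Both are accessible from $v_1$, and $C'$ lies downstream of $C$, which lies downstream of $v_1$; counting length-$n$ paths from $v_1$ that linger $\Theta(n)$ steps inside $C$, then travel to $C'$, then linger $\Theta(n)$ steps inside $C'$, yields growth of order $n\lambda^n$ (the convolution of two $\lambda^n$-growth blocks), again violating the upper bound $|(M^T)^n v_1| \le K\lambda^n$ from almost semisimplicity. Hence no such path exists. I expect the main obstacle to be making the "$\Theta(n)$ time in each of two blocks forces $n\lambda^n$ growth" estimate rigorous: one needs that within a recurrent component of spectral radius $\lambda$ the number of closed-ish walks of length $m$ is bounded \emph{below} by a constant multiple of $\lambda^m$ for infinitely many $m$ (or, after passing to a power of the adjacency matrix, for all large $m$), which is where Perron--Frobenius positivity of the dominant eigenvector on the component — together with the almost-semisimple hypothesis guaranteeing the $\lambda$-eigenvalue is non-defective — has to be invoked carefully; aperiodicity subtleties are handled by working along an arithmetic progression of lengths or replacing $M$ by a suitable power.
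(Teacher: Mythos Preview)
Your proposal is correct and follows essentially the same approach as the paper: characterize the support via positivity of both $\rho(\1)_i$ and $\ell(v_1)_i$, interpret these as two-sided path-growth conditions (reaching and being reached from a $\lambda$-component), and then use the $n\lambda^n$ convolution estimate to force the two $\lambda$-components to coincide, which also handles the final clause. You are in fact more explicit than the paper about the one genuine technicality---the Perron--Frobenius lower bound on walk counts inside a recurrent $\lambda$-component and the attendant aperiodicity issue---which the paper absorbs into its $\Theta$ notation without comment.
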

\begin{proof}
Recall that the Landau notation $f(x) = \Theta(g(x))$ for non-negative functions $f,g$
of $x$ means that there are positive constants $k_1,k_2$ so that 
$$k_1 g(x) \le f(x) \le k_2 g(x)$$
and the Landau notation $f(x) = o(g(x))$ means that $f(x)/g(x) \to 0$ as $x \to \infty$.

Since $\mu_i = \rho(\1)_i \ell(v_1)_i$, a vertex $v_j$ is in the support of $\mu$ if and only if 
the number of paths from $v_1$ to $v_j$ of length $\in [n-k,n+k]$ is $\Theta(\lambda^n)$
for some sufficiently big constant $k$, and the number of outgoing paths from $v_j$ of
length $\in [n-k,n+k]$ is also $\Theta(\lambda^n)$. Associated to a recurrent component
$C$ of $\Gamma$ one can form the adjacency matrix of the subgraph $C$, which has a biggest
real eigenvalue $\xi(C)$, by the Perron-Frobenius Theorem. Suppose for every directed
path $\gamma$ from $v_1$ to $v_j$ and for every recurrent component $C$ which intersects
$\gamma$ we have $\xi(C) < \lambda$. Then it is easy to see that the number of paths
from $v_1$ to $v_j$ of length $\in [n-k,n+k]$ is $o(\lambda^n)$. Similarly, if
for every directed path $\gamma$ starting at $v_j$ and every recurrent component $C'$ which
intersects $\gamma$ we have $\xi(C') < \lambda$ then the number of outgoing paths from
$v_j$ of length $\in [n-k,n+k]$ is $o(\lambda^n)$. Hence $v_j$ is in the support of
$\mu$ if and only if there are recurrent components $C,C'$ with $\xi(C) = \xi(C') = \lambda$
such that there is a path from $v_1$ to $v_j$ intersecting $C$, and an outgoing path
from $v_j$ intersecting $C'$.
Note that $v_j \in C = C'$ is explicitly allowed here. If $C \ne C'$ but there is
a path from $C$ to $C'$, then the number of paths of length $n$ which start in $C$ and
end in $C'$ has order $\Theta(\sum_i \lambda^i \lambda^{n-i}) = \Theta(n \lambda^n)$, contrary to the
hypothesis that $\Gamma$ is almost semisimple. It follows necessarily that $C = C'$.
\end{proof}

The construction of the measure $\mu$ and the stochastic matrix $N$ might seem
unmotivated for the moment. In the next section we shall provide a more geometric
interpretation of these objects.

\subsection{Word-hyperbolic groups and Patterson-Sullivan measures}

We now relate this picture to the theory of hyperbolic groups.
Let $G$ be a non-elementary hyperbolic group with generating set $S$.

\begin{definition}
The {\em Poincar\'e series} of $G$ is the series
$$\zeta_G(s) = \sum_{g \in G} e^{-s|g|}$$
\end{definition}

This series diverges for all sufficiently small $s$, and converges for all sufficiently
large $s$. The {\em critical exponent} is the supremum of the values of $s$ for which
the series diverges. 

\begin{theorem}[Coornaert, \cite{Coornaert} Thm.~7.2]\label{Coornaert_theorem}
Let $G$ be a non-elementary hyperbolic group with generating set $S$.
Let $G_n$ be the set of elements of word length $n$.
Then there are constants $\lambda > 1,K \ge 1$ so that
$$K^{-1}\lambda^n \le |G_n| \le K\lambda^n$$
for all positive integers $n$.
\end{theorem}

It follows from Theorem~\ref{Coornaert_theorem} that the critical exponent of the
Poincar\'e series is equal to $\log(\lambda)$, and the series $\zeta_G(\log(\lambda))$ diverges.

\begin{example}
Not every group with a geodesic combing satisfies such an inequality. For example,
let $G = F_2 \times F_2$ with the standard generating set. Then we can estimate
$$K^{-1} n 3^n \le |G_n| \le K n 3^n$$
for suitable $K >0$.
On the other hand, the language of words $uv$ where $u$ is a reduced word in
the first $F_2$ factor and $v$ is a reduced word in the second factor defines a
geodesic combing.
\end{example}

For each $n$, let $\nu_n$ be the probability measure on $G$ defined by
$$\nu_n = \frac {\sum_{|g| \le n} \lambda^{-|g|}\delta_g } {\sum_{|g| \le n} \lambda^{-|g|}}$$
where $\delta_g$ is the Dirac measure on the element $g$. The measure $\nu_n$ extends
trivially to a probability measure on $G \cup \partial G$, where $\partial G$ denotes
the Gromov boundary of $G$.

\begin{definition}\label{Patterson_Sullivan_definition}
A weak limit $\nu:= \lim_{n \to \infty} \nu_n$ is a {\em Patterson-Sullivan} measure
associated to $S$.
\end{definition}

In fact, the limit in Definition~\ref{Patterson_Sullivan_definition} exists, and an explicit formula
for a closely related measure $\widehat{\nu}$ (which differs from $\nu$ only by scaling) will be given shortly.
Note that the support of $\nu$ is contained in $\partial G$, since the Poincar\'e
series diverges at the critical exponent $\lambda$.

\medskip

Let $L$ be a combing of $G$ with respect to $S$, and let $\Gamma$ be a digraph.
From Coornaert's Theorem we immediately deduce

\begin{lemma}
Let $\Gamma$ be as above. Then $\Gamma$ is \almost.
\end{lemma}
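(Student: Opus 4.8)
The plan is to verify the three bullets in Definition~\ref{almost_definition} directly. The combinatorial points are immediate. Bullet~(1) holds because a digraph parameterizing a combing comes equipped with an initial vertex $v_1$, namely the input state of the underlying automaton. For bullet~(2), I would observe that only the vertices accessible from $v_1$ enter into the bijection between $L$ and directed paths from $v_1$, and only they are counted by $|(M^T)^n v_1|$; the remaining vertices can be deleted without changing the language or any path count. So we may and do assume every vertex of $\Gamma$ is accessible from $v_1$, which is exactly bullet~(2).

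The substance is bullet~(3), the two-sided exponential estimate for $|(M^T)^n v_1|$. The key point is the identification $|(M^T)^n v_1| = |G_n|$. Since $L$ is prefix-closed and $\Gamma$ parameterizes $L$, there is a natural bijection between elements of $L$ and directed paths in $\Gamma$ starting at $v_1$, and it matches words of $L$ of length $n$ with paths of (combinatorial) length $n$ from $v_1$; by Lemma~\ref{path_counting_lemma}, summed over all terminal vertices, the number of the latter is precisely $|(M^T)^n v_1|$. On the other hand, since $L$ is a combing, the evaluation map $L \to G$ is a bijection sending each word to an element of the same word length (every element of $L$ is a geodesic); hence words of $L$ of length $n$ correspond bijectively to the elements of $G$ of word length $n$, i.e. to $G_n$. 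Chaining these bijections gives $|(M^T)^n v_1| = |G_n|$.

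With this identity, bullet~(3) is exactly Coornaert's Theorem~\ref{Coornaert_theorem}: $G$ is non-elementary hyperbolic, so there are constants $\lambda > 1$ and $K \ge 1$ with $K^{-1}\lambda^n \le |G_n| \le K\lambda^n$ for all positive integers $n$. Substituting $|G_n| = |(M^T)^n v_1|$ yields the inequality demanded in bullet~(3), and the three bullets together say that $\Gamma$ is almost semisimple.

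There is essentially no obstacle here; the proof is a short bookkeeping argument. The only things to be careful about are that each bijection in the chain is length-preserving in the right sense (length-$n$ path from $v_1$ $\leftrightarrow$ length-$n$ word of $L$ $\leftrightarrow$ element of $G$ at distance $n$ from $\id$), and the harmless normalization in bullet~(2) of passing to the subgraph accessible from $v_1$. Non-elementarity is used solely to invoke Coornaert's theorem, which is what guarantees $\lambda > 1$ rather than merely $\lambda \ge 1$.
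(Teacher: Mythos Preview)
Your proposal is correct and matches the paper's intended argument: the paper gives no proof at all, merely stating that the lemma follows immediately from Coornaert's Theorem~\ref{Coornaert_theorem}, and you have simply spelled out the bookkeeping (the length-preserving bijections giving $|(M^T)^n v_1|=|G_n|$, plus the harmless normalization for bullet~(2)) that the paper leaves implicit.
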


A slightly better normalization of the measures $\nu_n$ are the sequence of measures $\widehat{\nu}_n$ 
defined by the formula
$$\widehat{\nu}_n = \frac 1 n \sum_{|g| \le n} \lambda^{-|g|}\delta_g$$
and let $\widehat{\nu}$ be a weak limit (supported on $\partial G$).

\begin{lemma}\label{nu_and_hat_nu}
Any two weak limits $\nu,\widehat{\nu}$ on $\partial G$ as above are absolutely continuous
with respect to each other, and satisfy $1/K \le d\widehat{\nu}/d\nu \le K$ for some $K\ge 1$.
\end{lemma}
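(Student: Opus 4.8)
The plan is to compare the normalizing denominators of the two families $\nu_n$ and $\widehat\nu_n$, and to show that the ratio of these normalizations is bounded above and below, uniformly in $n$; passing to weak limits then yields the Radon-Nikodym bounds. Concretely, write $Z_n = \sum_{|g|\le n}\lambda^{-|g|}$, so that $\nu_n = Z_n^{-1}\sum_{|g|\le n}\lambda^{-|g|}\delta_g$ while $\widehat\nu_n = n^{-1}\sum_{|g|\le n}\lambda^{-|g|}\delta_g$. Thus $\widehat\nu_n = (Z_n/n)\,\nu_n$ as measures on $G\cup\partial G$. If I can show there are constants $0 < c_1 \le c_2 < \infty$ with $c_1 \le Z_n/n \le c_2$ for all $n$, then every $\widehat\nu_n$ is a pointwise-bounded rescaling of $\nu_n$, and this bound is inherited by any pair of weak limits: for any nonnegative continuous test function $f$ on the compact space $G\cup\partial G$ we get $c_1\int f\,d\nu \le \int f\,d\widehat\nu \le c_2\int f\,d\nu$, which is exactly mutual absolute continuity with $1/K \le d\widehat\nu/d\nu \le K$ for $K = \max(c_2, 1/c_1)$.

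The heart of the matter is therefore the estimate $Z_n = \Theta(n)$. This follows directly from Coornaert's Theorem~\ref{Coornaert_theorem}: writing $Z_n = \sum_{k=0}^{n}\lambda^{-k}|G_k|$ and using $K^{-1}\lambda^k \le |G_k| \le K\lambda^k$, each term $\lambda^{-k}|G_k|$ lies in $[K^{-1}, K]$, so summing $n+1$ such terms gives $(n+1)/K \le Z_n \le (n+1)K$. Hence $Z_n/n$ is pinched between roughly $1/K$ and $2K$ for all $n \ge 1$, which is the desired two-sided bound. (One should be slightly careful at small $n$, e.g. $n=0$, but the statement is about weak limits as $n\to\infty$, so restricting to $n \ge 1$ is harmless.)

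The only genuinely nontrivial point — and the step I would flag as the main obstacle — is the passage from the uniform rescaling bound at finite level to the Radon-Nikodym statement for the limits: one must know the weak limits actually exist (asserted just after Definition~\ref{Patterson_Sullivan_definition}) or else work with subsequential limits, and one must check that the inequality $c_1\nu_n \le \widehat\nu_n \le c_2\nu_n$ (an inequality of measures, valid on every Borel set, or equivalently tested against nonnegative continuous functions) survives weak convergence. The latter is standard: weak limits of measures satisfying $\int f\,d\mu_n \le c_2\int f\,d\lambda_n$ for all $f \ge 0$ continue to satisfy the same inequality in the limit, and an inequality of finite measures of the form $\nu \le K\widehat\nu$ and $\widehat\nu \le K\nu$ is precisely mutual absolute continuity with Radon-Nikodym derivative in $[1/K, K]$. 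Since $\nu$ and $\widehat\nu$ differ from each other only through the choice of normalizing sequence applied to the identical underlying weighted sums, one could alternatively phrase the whole argument by noting $\widehat\nu_n$ and $\nu_n$ are mutually rescalings by the single scalar $Z_n/n \in [c_1, c_2]$, so after extracting a common subsequence along which both $\nu_n \to \nu$ and $Z_n/n \to c \in [c_1,c_2]$, one gets $\widehat\nu = c\,\nu$ outright — an even stronger conclusion than absolute continuity, and it makes the bound $1/K \le d\widehat\nu/d\nu \le K$ transparent with $K = \max(c_2, 1/c_1)$.
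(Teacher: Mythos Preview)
Your argument is correct and is precisely the intended one: the paper's proof is the single sentence ``The lemma is true for each $\widehat{\nu}_n,\nu_n$ by Theorem~\ref{Coornaert_theorem}, and therefore it is also true for their limits,'' and what you have written is a careful unpacking of exactly that sentence. Your observation that along a common subsequence one actually gets $\widehat\nu = c\,\nu$ for a scalar $c$ is a nice sharpening.
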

\begin{proof}
The lemma is true for each $\widehat{\nu}_n,\nu_n$ by Theorem~\ref{Coornaert_theorem}, and
therefore it is also true for their limits.
\end{proof}

The measure $\widehat{\nu}$ is not a probability measure in general, though it is finite.

\medskip

The group $G$ acts on itself by left multiplication. This action extends continuously
to a left action $G \times \partial G \to \partial G$.
Patterson-Sullivan measures enjoy a number of useful properties, summarized in the
following theorem.

\begin{theorem}[Coornaert, \cite{Coornaert} Thm. 7.7]\label{PS_Radon_Nikodym_lemma}
Let $\nu$ be a Patterson-Sullivan measure. The action of $G$ on $\partial G$ preserves
the measure class of $\nu$. Moreover, the action of $G$ on $\partial G$ is ergodic
with respect to $\nu$.
\end{theorem}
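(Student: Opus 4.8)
The plan is to follow Sullivan's classical argument for ergodicity of a boundary action, adapted to the word-hyperbolic setting: both assertions --- quasi-invariance of the measure class and ergodicity --- will come from a single geometric input, a \emph{Shadow Lemma}. For $g\in G$ and a radius $R>0$, let $\mathcal{O}_R(g)\subset\partial G$ denote the shadow, seen from $\id$, of the ball of radius $R$ about $g$: the set of $\xi$ such that some geodesic ray from $\id$ to $\xi$ meets that ball. First I would prove that there is an $R_0$ and, for each $R\ge R_0$, a constant $C=C(R)$ with
\[
C^{-1}\lambda^{-|g|}\ \le\ \nu(\mathcal{O}_R(g))\ \le\ C\lambda^{-|g|}\qquad\text{for all }g\in G .
\]
For the upper bound, any $h\in G$ lying within bounded distance of a geodesic from $\id$ to a point of $\mathcal{O}_R(g)$ satisfies $|h|\ge|g|-R$ and, by $\delta$-thinness of the triangle on $\id,g,h$, also $|g^{-1}h|\ge|h|-|g|-C_1$ for a constant $C_1=C_1(R,\delta)$; for fixed $|h|=m$ the number of such $h$ is $\Theta(|G_{m-|g|}|)=\Theta(\lambda^{m-|g|})$ by Theorem~\ref{Coornaert_theorem}, and a short computation with the definition of $\nu_n$ then gives $\nu_n(\mathcal{O}_R(g))\lesssim\lambda^{-|g|}$ uniformly in $n$; one passes to the weak limit, using inner versus outer shadows $\mathcal{O}_R(g)\subset\mathcal{O}_{R'}(g)$ to cope with the fact that shadows are neither open nor closed. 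The lower bound is dual: for $R$ large, $g$ times almost all of the ball of radius $n-|g|$ lands inside $\mathcal{O}_R(g)$, and Theorem~\ref{Coornaert_theorem} supplies $\Theta(\lambda^{n-|g|})$ such elements, so $\nu_n(\mathcal{O}_R(g))\gtrsim\lambda^{-|g|}$.

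Granting the Shadow Lemma, quasi-invariance is bookkeeping. Shadows generate the Borel $\sigma$-algebra of $\partial G$, and $g\cdot\mathcal{O}_R(h)$ is sandwiched between $\mathcal{O}_{R'}(gh)$ and $\mathcal{O}_{R''}(gh)$; hence $\nu(g\cdot\mathcal{O}_R(h))\asymp\lambda^{-|gh|}$, comparable to $\nu(\mathcal{O}_R(h))\asymp\lambda^{-|h|}$ up to the factor $\lambda^{|h|-|gh|}$, and the exponent $|h|-|gh|$ is, to bounded additive error, a function of the shadowed boundary point alone (a Gromov-product/Busemann cocycle in $g$ and $\xi$). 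Thus $g_*\nu$ and $\nu$ are mutually absolutely continuous, with $dg_*\nu/d\nu$ bounded above and below by $C^{\pm1}\lambda^{\pm|g|}$ --- finite and positive --- so the measure class is preserved.

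For ergodicity I would run Sullivan's density-point argument. The divergence of $\zeta_G(\log\lambda)$ recorded after Theorem~\ref{Coornaert_theorem}, combined with a Borel-Cantelli estimate built from the Shadow Lemma, shows that $\nu$-almost every $\xi$ is a conical limit point: there are a radius $R$ and group elements $g_n\to\xi$ staying within $R$ of a geodesic ray to $\xi$, so that the shadows $\mathcal{O}_R(g_n)$ form a neighbourhood basis at $\xi$. The Shadow Lemma also shows $(\partial G,\nu)$ is doubling, so a Lebesgue density theorem holds along these shadows. Now let $A\subset\partial G$ be $G$-invariant with $\nu(A)>0$; choose a density point $\xi_0\in A$, so that $\nu(\mathcal{O}_R(g_n)\cap A)/\nu(\mathcal{O}_R(g_n))\to1$ along the corresponding $g_n$. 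Applying $g_n^{-1}$: the set $g_n^{-1}\mathcal{O}_R(g_n)$ is, up to a bounded perturbation, the complement of a shadow of bounded (hence small) size --- along a subsequence on which the first letter of $g_n$ is constant it contains a fixed set of positive measure $\beta$ --- while, by the quasi-invariance estimate, the Radon-Nikodym derivative $d(g_n^{-1})_*\nu/d\nu$ varies over $\mathcal{O}_R(g_n)$ by at most a bounded multiplicative factor. Hence $\nu(A)\ \ge\ \nu\big(A\cap g_n^{-1}\mathcal{O}_R(g_n)\big)\ \gtrsim\ \nu\big(g_n^{-1}\mathcal{O}_R(g_n)\big)\ \ge\ \beta$, so $\nu(A)$ is bounded below by a definite positive constant; applying the same reasoning to the invariant set $\partial G\setminus A$ forces one of $A$, $\partial G\setminus A$ to be $\nu$-null. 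Therefore $\nu(A)\in\{0,\nu(\partial G)\}$.

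The main obstacle is the Shadow Lemma itself: the uniform two-sided estimate $\nu(\mathcal{O}_R(g))\asymp\lambda^{-|g|}$ carries all the geometric content, and proving it amounts to converting the purely combinatorial count of Theorem~\ref{Coornaert_theorem} into a statement about shadows via $\delta$-hyperbolic geometry, while controlling the weak-limit step and the slack between inner and outer shadows. Once it is available, quasi-invariance is formal and the ergodicity step is the textbook Sullivan/Hopf density argument; a minor additional point is the covering (Vitali/Besicovitch-type) property underlying the density theorem, which again follows from the doubling property the Shadow Lemma provides.
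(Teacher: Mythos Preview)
The paper does not give its own proof of this theorem: it is quoted verbatim from Coornaert \cite{Coornaert}, Thm.~7.7, and used as a black box. So there is no in-paper argument to compare against; your sketch is essentially an outline of Coornaert's own proof, which in turn adapts Sullivan's classical Shadow Lemma/density-point strategy to the word-hyperbolic setting. In that sense the approach is the expected one.

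That said, the final paragraph of your ergodicity argument does not close. From the density-point calculation and bounded distortion you correctly extract that
\[
\frac{\nu\big(A^c\cap g_n^{-1}\mathcal{O}_R(g_n)\big)}{\nu\big(g_n^{-1}\mathcal{O}_R(g_n)\big)}\ \longrightarrow\ 0,
\]
but you then only conclude $\nu(A)\ge\beta$ for some fixed $\beta>0$ and claim that repeating this for $A^c$ forces one of them to be null. That is not a contradiction: nothing prevents both $\nu(A)$ and $\nu(A^c)$ from exceeding $\beta$. What you actually need is that the complements $\partial G\setminus g_n^{-1}\mathcal{O}_R(g_n)$ are contained in shadows whose $\nu$-measure can be made \emph{arbitrarily small} (not merely bounded) --- equivalently, that along a subsequence $g_n^{-1}\mathcal{O}_R(g_n)$ has $\nu$-measure tending to the total mass --- so that the displayed ratio forces $\nu(A^c)=0$. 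This is exactly what Coornaert arranges (using that the complement is a shadow based near $g_n^{-1}$ and that one can push its diameter, hence its measure, to zero by choosing $R$ appropriately and passing to a subsequence where $g_n^{-1}$ converges). Your phrase ``complement of a shadow of bounded (hence small) size'' gestures at this, but ``bounded'' is not the same as ``small'', and as written the logic does not go through; once you make this quantitative the rest of your outline is fine.
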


Here we say that a group action which preserves a measure class is {\em ergodic} 
if for all subsets $A,B \subset \partial G$ with positive
$\nu$-measure, there is $g \in G$ with $\nu(gA \cap B) > 0$.
By Lemma~\ref{nu_and_hat_nu}, the action of $G$ on $\partial G$ preserves the measure
class of $\widehat{\nu}$ and acts ergodically.

\begin{remark}
In fact, Coornaert shows that there is some constant $K \ge 1$ such that for any $s \in S$
there is an inequality
$$K^{-1} \le \frac {d (s_*\nu)} {d\nu} \le K$$
i.e. the action is ``quasiconformal''. It follows that the same is true for the $\widehat{\nu}$
measure, with the same constant $K$. We will not use this fact in this paper.
\end{remark}

We now relate the measure $\widehat{\nu}$ to the measure $\mu$ and the stochastic matrix
$N$ defined in \S~\ref{stationary_markov_chain_subsection}. 

There is a natural map $Y_n \to G$ for each $n$, taking a path in $\Gamma$ to the
endpoint of the corresponding geodesic path in $G$ which starts at $\id$. 
The measures $\widehat{\nu}_n$ determine a measure
on $Y$ as follows. 

For each $g \in G$, let $\cone(g)$ denote the set of $g' \in G$ such that
the geodesic for $g'$ in the combing passes through $g$. Let $p:Y \to Y_n$ take an
infinite path to its prefix of length $n$. If $y \in Y_n$ corresponds to
$g \in G$, define 
$$\widehat{\nu}(p^{-1}(y)) = \lim_{n \to \infty} \widehat{\nu}_n(\cone(g))$$
Since the cylinders $p^{-1}(y)$ define the Borel $\sigma$-algebra on $Y$, this defines a measure
$\widehat{\nu}$ on $Y$. There is a map $Y \to \partial G$ taking an infinite path to the endpoint
of the corresponding geodesic in $G$. The pushforward
of $\widehat{\nu}$ on $Y$ under this map agrees with
$\widehat{\nu}$ on $\partial G$.

We give an explicit formula for $\widehat{\nu}_m(\cone(g))$ for any $g$. By definition,
for any $g \in G_n$ and any $m \ge n$ we have
$$\widehat{\nu}_m(\cone(g)) = \frac 1 m \sum_{\substack{h \in \cone(g) \\ |h| \le m}} \lambda^{-|h|}$$
If $v_g \in \Gamma$ is the vertex corresponding to the endpoint of the path for the
element $y \in Y_n$ corresponding to $g$, then we can rewrite this as
$$\widehat{\nu}_m(\cone(g)) = \frac 1 m \lambda^{-n} \sum_{i \le m-n} \lambda^{-i} \langle (M^T)^i v_g, \1 \rangle$$
taking a limit as $m \to \infty$, we get
$$\widehat{\nu}(p^{-1}(y)) = \lambda^{-n}\langle \ell(v_g),\1 \rangle = \lambda^{-n} \langle v_g,\rho(\1)\rangle = \lambda^{-n} \rho(\1)_g$$

\medskip

Measures are pushed forward by maps, so $S_*\widehat{\nu}$ is the measure on $X$ satisfying
$$S_*\widehat{\nu}(B) = \widehat{\nu}(S^{-1}(B))$$
for any Borel $B \subset X$, and similarly for positive powers $S^i$.

The subset $Y \subset X$ contains the support of $\widehat{\nu}$. By convention, $v_1$ has no incoming edges,
so $Y$ is disjoint from $SY$, and therefore $\widehat{\nu}$ is not invariant under $S$. However, the limit
$$\lim_{n \to \infty} \sum_{i=1}^n \frac 1 n S^i_* \widehat{\nu}$$
is manifestly $S$ invariant. The fact that this limit exists is contained in the proof of the
next Lemma.

\begin{lemma}
The measure $\lim_{n \to \infty} \sum_{i=1}^n \frac 1 n S^i_* \widehat{\nu}$ on $X$ 
is equal to the measure $\mu$ on $X$ defined on cylinders using the measure
$\mu$ on $\Gamma$ and $N$ as in \S~\ref{stationary_markov_chain_subsection}.
\end{lemma}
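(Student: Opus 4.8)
The plan is to compute both measures on a cylinder set and check that they agree. A cylinder in $X$ is specified by a finite path $v_{i_0} v_{i_1} \cdots v_{i_n}$ in $\Gamma$; the two measures in question are determined by their values on such cylinders, so it suffices to verify the identity there. The measure $\mu$ defined via $N$ assigns to this cylinder the value $\mu(v_{i_0}) N_{i_0 i_1} \cdots N_{i_{n-1} i_n}$, and upon substituting $N_{ij} = M_{ij}\rho(\1)_j/(\lambda \rho(\1)_i)$ and $\mu(v_{i_0}) = \rho(\1)_{i_0}\ell(v_1)_{i_0}$ (up to the global normalizing constant), the product telescopes: the $\rho(\1)$ factors cancel in pairs, leaving
$$\lambda^{-n}\,\ell(v_1)_{i_0}\,\rho(\1)_{i_n}\,\prod_{k=0}^{n-1} M_{i_k i_{k+1}}.$$
So the job is to show the spatial average $\lim_n \frac1n \sum_{i=1}^n S^i_*\widehat\nu$ assigns this same quantity to the cylinder.

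First I would record the formula already derived in the text: for $y \in Y_m$ corresponding to $g \in G_m$ with endpoint vertex $v_g$, one has $\widehat\nu(p^{-1}(y)) = \lambda^{-m}\rho(\1)_{v_g}$. Next I would unwind what $S^i_*\widehat\nu$ does to a cylinder $Z$ based at a path starting at vertex $v_{i_0}$: a point of $S^{-i}(Z)$ is an infinite path in $Y$ whose $i$-th vertex is $i_0$, whose subsequent $n$ steps realize the prescribed path, and whose first $i$ vertices form an arbitrary path from $v_1$ to $v_{i_0}$. Summing $\widehat\nu$ over all such initial segments of length $i$, and using that the number of length-$i$ paths from $v_1$ to $v_{i_0}$ enters exactly as $\langle (M^T)^i v_1, \cdot\rangle$ weighted appropriately, one finds
$$S^i_*\widehat\nu(Z) = \Big(\lambda^{-i}\langle (M^T)^i v_1, v_{i_0}\rangle\Big)\cdot \lambda^{-n}\,\rho(\1)_{i_n}\,\prod_{k=0}^{n-1} M_{i_k i_{k+1}}.$$
Here the second factor is the contribution of the fixed tail; only the head contributes the $i$-dependence.

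Now the Cesàro average kicks in: by Lemma~\ref{transpose_limit_exists}, $\frac1n\sum_{i=1}^n \lambda^{-i}(M^T)^i v_1 \to \ell(v_1)$, so the $i$-dependent head factor averages precisely to $\langle \ell(v_1), v_{i_0}\rangle = \ell(v_1)_{i_0}$. This gives the limit of $\frac1n\sum_i S^i_*\widehat\nu(Z)$ as $\lambda^{-n}\,\ell(v_1)_{i_0}\,\rho(\1)_{i_n}\,\prod_{k=0}^{n-1} M_{i_k i_{k+1}}$, matching the formula for $\mu(Z)$ above up to the overall normalization (both measures being normalized the same way as probability measures, or differing by the fixed constant relating $\widehat\nu$ and $\mu$). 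One should check the edge cases: cylinders based at vertices outside the support of $\rho(\1)$, where $N$ was defined by fiat, contribute measure zero on both sides by Lemma~\ref{support_of_measure}, so the discrepancy in the definition of $N$ there is invisible. I also need to confirm that the convergence is strong enough to pass to the limit measure — but since everything reduces to a finite-dimensional statement about the convergent Cesàro averages of the vectors $\lambda^{-i}(M^T)^i v_1$, and there are only finitely many cylinder types at each level $n$, this is automatic.

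**Main obstacle.** The genuinely delicate point is bookkeeping the head factor correctly: one must be careful that $S^{-i}(Z)$ lives inside $Y$ (paths starting at $v_1$), that the initial segment is genuinely a path from $v_1$ to $v_{i_0}$, and that summing $\widehat\nu$ over these segments really produces $\lambda^{-i}\langle (M^T)^i v_1, v_{i_0}\rangle$ rather than some off-by-$\lambda$ or transpose-swapped variant — the text itself flags that "there are many opportunities for left-right errors." Once the head-sum identity $\sum_{\text{paths}} \widehat\nu(\cdot) = \lambda^{-i}\langle (M^T)^i v_1, v_{i_0}\rangle$ is pinned down and combined with Lemma~\ref{transpose_limit_exists}, the rest is the telescoping computation, which is routine.
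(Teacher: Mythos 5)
Your proposal is correct and follows essentially the same route as the paper: both rest on the formula $\widehat{\nu}(p^{-1}(y)) = \lambda^{-n}\rho(\1)_{v_g}$, path counting via powers of $M$, and the Ces\`aro convergence $\frac1n\sum_i \lambda^{-i}(M^T)^i v_1 \to \ell(v_1)$. The only difference is organizational --- the paper verifies the depth-zero marginals and then checks that the one-step conditional probabilities of $\widehat{\nu}$ equal $N_{ij}$, whereas you compute both measures directly on an arbitrary cylinder and let the telescoping do the work --- and your handling of the degenerate vertices (where $\rho(\1)_i=0$ and $N$ is defined by fiat) is a point the paper glosses over entirely.
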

\begin{proof}
We identify $X_0 = \Gamma$, and observe that the measure $\mu$ on $\Gamma$ agrees
with the measure $\mu$ on cylinders $p^{-1}(v)$ of $X$ for $v \in \Gamma$. For
each vertex $v_i$ of $\Gamma$ we can calculate
$$S^n_* \widehat{\nu}(p^{-1}(v_i)) = \lambda^{-n} \sum_{\substack{y \in Y_n \\ S^n y = v_i}}
\langle v_i,\rho(\1)\rangle$$
On the other hand, the number of $y \in Y_n$ with $S^ny = v_i$ is exactly equal to the
number of directed paths in $\Gamma$ of length $n$ which start at $v_1$ and
end at $v_i$, which is $\langle v_1,M^nv_i \rangle$. Hence
\begin{align*}
\lim_{n \to \infty} \sum_{i=1}^n \frac 1 n S^i_* \widehat{\nu}(p^{-1}(v_i)) 
&= \lim_{n \to \infty} \langle v_i,\rho(\1) \rangle \langle v_1, \frac 1 n \sum_{i \le n} \lambda^{-n} M^n v_i \rangle \\
& = \langle v_i,\rho(\1)\rangle \langle v_1,\rho(v_i)\rangle = \rho(\1)_i\ell(v_1)_i = \mu_i
\end{align*}
This shows that the measures agree on the cylinders $p^{-1}(v_i)$ for $v_i \in \Gamma = X_0$.
By the defining property of $\mu$ on $X$, it suffices to show that for every $y \in Y$ whose
$n$th vertex is $v_i$, the $\widehat{\nu}$ probability that the $(n+1)$st vertex is $v_j$ is
$N_{ij}$. If $y_n$ is a prefix of $y$ of length $n$ whose last vertex is $v_i$,
and $g \in G_n$ is the corresponding element, we need to calculate
$$\lim_{m \to \infty} \frac {\sum_h \widehat{\nu}_m(\cone(h))} {\widehat{\nu}_m(\cone(g))}$$
where the sum is taken over all $h \in G_{n+1} \cap \cone(g)$ corresponding to $y' \in Y_{n+1}$
whose last vertex is $v_j$.
For each such $g$, the number of $h$ is $M_{ij}$ so this ratio is equal to 
$M_{ij} \rho(\1)_j/\lambda\rho(\1)_i$ which equals
$N_{ij}$ as claimed.
\end{proof}

In fact we can be even more precise about the relationship
between $\widehat{\nu}$ and $\mu$. If $y \in Y_n$ is a prefix whose
last vertex projects to some $v \in \Gamma$ in the support of $\mu$,
then $S^n$ maps the cylinder $p^{-1}(y)$ of $Y$ to the cylinder $X_v$ of $X$ consisting
of paths with initial vertex $v$, and from the definitions,
the pushforward of $\widehat{\nu}|_{p^{-1}(y)}$
to $X$ under $S^n$ is proportional to $\mu|_{X_v}$.

\subsection{Central Limit Theorem for stationary Markov chains}

The stationary measure $\mu$ on $\Gamma$ defined in \S~\ref{stationary_markov_chain_subsection}
has support equal to a finite union of maximal recurrent components of $\Gamma$,
by Lemma~\ref{support_of_measure}. Label these components $C^i$, and let $\mu^i$ be the
probability measure on $C^i$ which is proportional to $\mu|_{C^i}$. The matrix $N$
preserves $\mu^i$, so it makes sense to talk about a random walk on $C^i$ with respect to $N$.

Recall that $d\phi$ exists as a function on the vertices of $\Gamma$. The stationary
Markov chain defined by the matrix $N$ defines a random sequence $x_0,x_1,x_2,\cdots$ of vertices 
in $C^i$ where $x_0$ is chosen randomly from the vertices of $C^i$ with
respect to the measure $\mu^i$, and at each subsequent stage if $x_i$ corresponds to a vertex
$v_j$, the probability that $x_{i+1}$ will correspond to the vertex $v_k$ is equal to
$N_{jk}$. A sequence $x_0,x_1,x_2,\cdots$ obtained in this way is called a {\em random walk}
on $C^i$. If $X_n^i$ denotes the subset of $X_n$ consisting of paths of length $n$
which start at some vertex in $C^i$, then the measure $\mu$ on $X_n$ restricted to
$X_n^i$ can be scaled to a probability measure $\mu^i$ on $X_n^i$. In this notation,
a random walk on $C^i$ of length $n$ corresponds to a random element of $X_n^i$.

Let $\overline{S}^i_n$ be the sum of the values of $d\phi$ on a random walk on $C^i$ of length
$n$. Technically, $\overline{S}^i_n$ is a {\em distribution} on $\Z$. There is a map
$X_n^i \to \Z$ taking each walk $x_0,x_1,x_2,\cdots$ to the sum of $d\phi$ on the corresponding
vertices in $\Gamma$. The pushforward of the measure $\mu^i$ on $X_n^i$ is the distribution
$\overline{S}^i_n$. In this context, the Central Limit Theorem for stationary Markov chains, due essentially
to Markov, takes the following form:

\begin{theorem}[Markov]\label{clt_component}
Let $E^i$ be the integral of $d\phi$ with respect to the probability measure $\mu^i$.
Then there is some $\sigma^i \ge 0$ for which
$$\lim_{n \to \infty} {\bf P} \left( r \le \frac {\overline{S}^i_n - nE^i} {\sqrt{n(\sigma^i)^2}} \le s\right)
= \frac 1 {\sqrt{2\pi}} \int_r^s e^{-x^2/2} dx$$
where ${\bf P}$ denotes probability with respect to the measure $\mu^i$.
\end{theorem}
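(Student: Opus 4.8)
The plan is to reduce the Central Limit Theorem for the stationary Markov chain on a single recurrent component $C^i$ to a classical CLT for functionals of finite-state Markov chains, for instance via the Gordin--Liverani martingale approximation or (equivalently for our purposes) the classical theorem of Markov/Doeblin. First I would restrict attention to $C^i$ with its $N$-invariant probability measure $\mu^i$; since $C^i$ is a recurrent component, the restriction of $N$ to $C^i$ is \emph{irreducible}, so by Perron--Frobenius it has a unique stationary probability vector, which must be $\mu^i$. Irreducibility plus finiteness of the state space give a spectral gap for $N$ acting on the space of functions orthogonal to constants in $L^2(\mu^i)$; more precisely, after passing to the period-$d$ cyclic decomposition of $C^i$ (or by looking at $N^d$), the restricted transition operator has $1$ as a simple eigenvalue and all other eigenvalues strictly inside the unit disc.

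The main steps would then be: (1) Let $f = d\phi - E^i$, so $f$ has zero mean with respect to $\mu^i$. Using the spectral gap, the Poisson equation $g - Ng = f$ has a bounded solution $g$ on the (finite) vertex set of $C^i$; explicitly $g = \sum_{k \ge 0} N^k f$, which converges geometrically. (2) Define, along a random walk $x_0, x_1, x_2, \dots$ with $x_0 \sim \mu^i$, the quantities $m_{k+1} = g(x_{k+1}) - (Ng)(x_k)$; these form a stationary ergodic martingale difference sequence with respect to the natural filtration, and a telescoping computation gives $\overline{S}^i_n = \sum_{k=0}^{n} d\phi(x_k) = nE^i + \sum_{k=1}^{n} m_k + (g(x_0) - g(x_n))$, where the boundary term $g(x_0) - g(x_n)$ is bounded. (3) Apply the martingale central limit theorem to $\frac{1}{\sqrt{n}}\sum_{k=1}^n m_k$: stationarity and ergodicity of the chain (which follow from irreducibility of $N|_{C^i}$) give $\frac{1}{n}\sum_{k=1}^n \mathbf{E}(m_k^2 \mid \mathcal{F}_{k-1}) \to (\sigma^i)^2$ almost surely, where $(\sigma^i)^2 = \int (g - Ng)\,?$ — more precisely $(\sigma^i)^2 = \int \big( (Ng^2)(v) - (Ng)(v)^2 \big)\, d\mu^i(v) = \int g^2\,d\mu^i - \int (Ng)^2\,d\mu^i$, a nonnegative quantity. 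The uniformly bounded increments trivially satisfy the Lindeberg condition. (4) Conclude $n^{-1/2}(\overline{S}^i_n - nE^i) \to N(0,\sigma^i)$ in distribution, since the bounded boundary term contributes $o(\sqrt{n})$ and hence vanishes in the limit.

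The step I expect to be the main (though standard) obstacle is handling \emph{periodicity}: a recurrent component $C^i$ need not be aperiodic, so $N|_{C^i}$ may have several eigenvalues of modulus $1$ (the $d$-th roots of unity, $d$ the period), which blocks a naive application of a spectral-gap argument and forces the Poisson equation to be solved on the orthogonal complement of \emph{all} unit-modulus eigenspaces, not just the constants. This is resolved by noting that the only unit-modulus eigenfunctions are supported on the cyclic classes and are $N^d$-invariant; since $d\phi$ and the target sum are genuine functions on vertices (not twisted by a character), the relevant projection of $f = d\phi - E^i$ onto these eigenspaces still vanishes, so $g = \sum_{k\ge 0} N^k f$ converges and the martingale decomposition goes through unchanged. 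Alternatively one simply cites the classical Markov-chain CLT (e.g. \cite{Grinstead_Snell}, or the references \cite{Pollicott_Yuri}, \cite{Kemeny_Snell}), which already incorporates this; in that case the proof is a one-line verification that the hypotheses (finite state space, irreducibility of $N$ on $C^i$, stationarity) hold. The degenerate case $\sigma^i = 0$ requires no special treatment: it simply means $g - Ng$ is $\mu^i$-a.e.\ zero, i.e.\ $d\phi = E^i + (\text{coboundary})$ along the chain, and then $\overline{S}^i_n - nE^i$ stays bounded, so the rescaled variable converges to the Dirac mass at $0$, which is $N(0,0)$ by our convention.
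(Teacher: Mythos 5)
The paper does not actually prove this theorem: it is stated as a classical result due to Markov, and the reader is simply referred to Romanovskii's book (p.~231, and Chap.~4, \S~47 for formulae for $\sigma^i$). Your second option --- ``simply cite the classical Markov-chain CLT'' --- is therefore exactly what the authors do, and your observation that one then only needs to verify the hypotheses (finite state space, irreducibility of $N$ restricted to $C^i$, stationarity of $\mu^i$) is the entire content of the paper's treatment. Your first option, the self-contained Gordin-style martingale approximation via the Poisson equation, is a genuinely different and more informative route: it yields the variance formula $(\sigma^i)^2 = \int g^2\,d\mu^i - \int (Ng)^2\,d\mu^i$ explicitly (the paper only asserts that $\sigma^i$ is algebraic and computable from $\mu$ and $N$), and it makes transparent why $\sigma^i=0$ is exactly the case where $d\phi - E^i$ is a coboundary along the chain, which is used implicitly elsewhere in the paper.

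One step of your self-contained argument is wrong as written, though easily repaired. In handling periodicity you claim that, because $d\phi$ is a genuine function on vertices, the projection of $f = d\phi - E^i$ onto the nontrivial unit-modulus eigenspaces of $N|_{C^i}$ vanishes, so that $g=\sum_{k\ge 0}N^k f$ converges. This is false: for the deterministic $2$-cycle $a\to b\to a$ with $d\phi(a)=1$, $d\phi(b)=0$, the centered function $f$ \emph{is} (a multiple of) the $(-1)$-eigenfunction, and $\sum_k N^k f=\sum_k(-1)^k f$ diverges. What saves you is that the series is not needed: for an irreducible stochastic matrix the eigenvalue $1$ is simple even in the periodic case, so $I-N$ has one-dimensional kernel, and since $(\mu^i)^T(I-N)=0$ its image lies in (hence, by dimension count, equals) the space of $\mu^i$-mean-zero functions. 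Thus the Poisson equation $g-Ng=f$ is solvable by direct linear algebra (or by Abel summation of your series), and steps (2)--(4) of your martingale decomposition go through verbatim. With that correction your proof is complete.
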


Said another way, there is a convergence in the sense of distribution
$$n^{-1/2}(\overline{S}^i_n - nE^i) \to N(0,\sigma^i)$$
where $N(0,\sigma^i)$ denotes a normal (i.e. Gaussian) 
distribution with mean $0$ and standard deviation $\sigma^i$.
The standard deviation $\sigma^i$ can be computed easily from $\mu$ and $N$, and is an algebraic
function of the entries. Hence in particular, $E^i$ and $\sigma^i$ are algebraic and effectively
computable.

For a proof, see e.g. \cite{Romanovskii}, p.~231, and see Chap.~4, \S~47 for a derivation of
a number of formulae for $\sigma^i$.

\subsection{Ergodicity at infinity and CLT for bicombable functions}

To derive a central limit theorem for $\phi$, we need to be able to compare means $E^i$ and
standard deviations $\sigma^i$ associated to different components $C^i$. To do this we introduce
the idea of a {\em typical path} (with respect to the function $\phi$).

For any real number $r$, let $\delta(r)$ denote the probability measure on $\R$ consisting of
an atom of mass $1$ concentrated at $r$.
Let $\gamma$ be an infinite geodesic ray in $G$. For some fixed real number $E$ and
for integers $n,m$ we can define the distribution
$$\omega(n,m)(\gamma) := \sum_{i=1}^m \frac 1 m \; \delta \left( (\phi(\gamma_{i+n}) - \phi(\gamma_i) - nE)n^{-1/2}\right)$$

\begin{definition}
A geodesic ray $\gamma$ is $E,\sigma$-typical if the limit
$$\omega(\gamma):=\lim_{n \to \infty} \lim_{m \to \infty} \omega(n,m)(\gamma)$$
exists in the sense of distribution, and is equal to $N(0,\sigma)$.
An element $y \in Y$ is $E,\sigma$-typical if the corresponding geodesic ray in $G$
is $E,\sigma$-typical.
\end{definition}

We parse this condition as follows. For each segment of $\gamma$ of length $n$ (contained between $\gamma(0)$ and
$\gamma(n+m)$), we look at the difference of $\phi$ on the endpoints. In this way we obtain a set of $m$ numbers,
which we think of as an probability measure on $\R$, by weighting each number equally. We then translate the
origin by $nE$, and rescale the $x$-axis by a factor of $n^{-1/2}$, and consider the resulting probability measure
$\omega(n,m)(\gamma)$. For fixed $n$, we consider the limit $\omega(n)(\gamma)$ (if it exists); this distribution is
the (rescaled) difference of $\phi$ on the endpoints of a ``random'' segment of $\gamma$ of length $n$. Finally, we
take the limit as $n \to \infty$ (the rescaling is done so that this limit typically exists). The result $\omega(\gamma)$
is the difference of $\phi$ on the endpoints of a ``long random'' segment of $\gamma$, translated and scaled.

For each component $C^i$, let $Y^i$ be the subset of $Y$ consisting of paths which eventually
enter the component $C^i$ and never leave. Note that $Y^i$ and $Y^j$ are disjoint if $i \ne j$.

\begin{lemma}
The measure $\widehat{\nu}(Y - \cup_i Y^i) = 0$. Moreover, $\widehat{\nu}$ a.e. $y \in Y^i$ are
$E^i,\sigma^i$-typical, where $E^i,\sigma^i$ are as in Theorem~\ref{clt_component}.
\end{lemma}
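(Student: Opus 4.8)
The plan is to combine the two ingredients already assembled in this section: the Central Limit Theorem for stationary Markov chains on each recurrent component $C^i$ (Theorem~\ref{clt_component}), and the ergodicity of the $G$-action on $\partial G$ with respect to the Patterson--Sullivan measure (Theorem~\ref{PS_Radon_Nikodym_lemma}), transported to $Y$ via the measure $\widehat{\nu}$ and its relation to $\mu$ established at the end of \S~4.3.

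First I would handle the assertion $\widehat{\nu}(Y - \cup_i Y^i) = 0$. A path $y \in Y$ lies in some $Y^i$ precisely when it eventually enters a component lying in the support of $\mu$ and never leaves; by Lemma~\ref{support_of_measure}, once a path enters such a component it cannot reach any other component in the support. So $Y - \cup_i Y^i$ consists of paths that either never enter a component of the support, or wander among components of $\Gamma$ not in the support. Using the pushforward relationship between $\widehat{\nu}$ on cylinders $p^{-1}(y)$ and $\mu$ on the corresponding cylinders $X_v$ (the last paragraph of \S~4.3), together with the characterization of $\mathrm{supp}(\mu)$ in terms of $\Theta(\lambda^n)$-growth of ingoing and outgoing paths, the $\widehat{\nu}$-measure of the set of $y$ whose $n$th vertex is not yet ``committed'' to a support component decays, and in the limit the complement has measure zero. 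Concretely, $\widehat{\nu}(p^{-1}(y)) = \lambda^{-n}\rho(\1)_{v}$ where $v$ is the terminal vertex; summing over $y \in Y_n$ whose terminal vertex is \emph{not} in $\mathrm{supp}(\mu)$ gives a quantity that is $o(1)$ because those vertices carry only $o(\lambda^n)$ outgoing paths, hence $\rho(\1)_v$ vanishes there, so in fact such $y$ contribute nothing and $\widehat{\nu}$ is carried by $\cup_i Y^i$.

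Next, for the main assertion, fix a component $C^i$. For $\widehat{\nu}$-a.e.\ $y \in Y^i$ the path enters $C^i$ at some finite time $n_0(y)$ and thereafter is an infinite path in $C^i$; moreover, by the last paragraph of \S~4.3, the pushforward of $\widehat{\nu}$ restricted to $p^{-1}(y_{n_0})$ under $S^{n_0}$ is proportional to $\mu^i$ restricted to the appropriate cylinder of $X$. So the ``tail'' of a $\widehat{\nu}$-generic $y \in Y^i$ is distributed (after normalization) according to the stationary Markov chain $\mu^i$ on $C^i$. The claim that $\omega(\gamma)$ exists and equals $N(0,\sigma^i)$ is then a pointwise (almost-everywhere) statement which, by the inner limit $m \to \infty$ being a Birkhoff-type ergodic average over the shift on $(X, \mu^i)$, follows once one knows the shift is ergodic on each component $C^i$ with respect to $\mu^i$ — that recurrent component with irreducible adjacency matrix gives an ergodic (indeed mixing, up to periodicity) Markov shift — so that for a.e.\ starting path the empirical distribution of the increments $\phi(\gamma_{i+n})-\phi(\gamma_i)$ over $i$ converges to the law of $\overline{S}^i_n$ under $\mu^i$; then the outer limit $n \to \infty$ is exactly the content of Theorem~\ref{clt_component}. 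One must also check that the bounded discrepancy between the values of $\phi$ and the partial sums $\sum d\phi(\gamma(j))$ over the \emph{whole} path versus over the tail in $C^i$ — controlled by Lemma~\ref{subdivide_lemma} and the Lipschitz property — washes out after dividing by $n^{-1/2}$, which it does since the error is $O(1)$.

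The main obstacle, and the point that needs the most care, is interchanging the two limits and justifying the a.e.\ convergence of the inner average: the quantity $\omega(n,m)(\gamma)$ is for each fixed $n$ a Birkhoff average along $\gamma$ of the function $y \mapsto \delta((\phi(\gamma_{n}(\text{shifted})) - \phi(\gamma_0) - nE^i)n^{-1/2})$, and one must apply the ergodic theorem on the Markov shift $(X,\mu^i)$ — valid because $\mu^i$ is supported on a single irreducible recurrent component — to conclude that $\lim_{m\to\infty}\omega(n,m)(\gamma)$ exists for $\mu^i$-a.e.\ path and equals the distribution of $\overline{S}^i_n$ (up to $O(1)$ corrections); only then can one feed the result into Theorem~\ref{clt_component}. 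A subtle point is that this must hold \emph{simultaneously} for all $n$, which is fine since a countable intersection of conull sets is conull. Transporting this conull set from $(X,\mu^i)$ back to $(Y^i,\widehat{\nu})$ via $S^{n_0(y)}$, and using that $\widehat{\nu}$ on $Y^i$ is, component-cylinder by component-cylinder, absolutely continuous with push-forwards of $\mu^i$, completes the argument.
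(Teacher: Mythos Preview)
Your approach is essentially the one the paper takes: for the first assertion, a counting argument showing that paths avoiding (or leaving) the big components have negligible $\widehat{\nu}$-mass; for the second, ergodicity of the Markov shift on each $C^i$ with respect to $\mu^i$, the Birkhoff/random ergodic theorem to get a.e.\ convergence of the empirical distributions $\omega(n,m)(\gamma)$ as $m\to\infty$, then Theorem~\ref{clt_component} for the outer limit, and finally transport from $(X_i,\mu^i)$ back to $(Y^i,\widehat{\nu})$ via the quasi-equivalence established at the end of \S~4.3. The paper makes the ergodicity step explicit by observing that $\mu^i$ is the \emph{unique} $N(i)$-stationary measure on $C^i$ (hence extremal), and constructs an explicit map $q:Y^i\to X_i$ by shifting each path into $C^i$, checking that $q_*\widehat{\nu}$ and $\mu^i$ are mutually absolutely continuous; your sketch of this step is correct.

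One concrete slip in your first paragraph: it is \emph{not} true that $\rho(\1)_v=0$ for every vertex $v\notin\mathrm{supp}(\mu)$. A vertex lying strictly between $v_1$ and some $C^i$ (never having entered any big component) has $\ell(v_1)_v=0$ but $\rho(\1)_v>0$, since it still has $\Theta(\lambda^n)$ outgoing paths. The correct dichotomy is: if $v\notin\cup_i C^i$ and $\rho(\1)_v>0$, then $v$ is not reachable from any $C^i$ (by Lemma~\ref{support_of_measure} and almost-semisimplicity), so the number of length-$n$ paths from $v_1$ to $v$ is $o(\lambda^n)$ and the total cylinder mass over such $v$ is $o(1)$; whereas if $v$ is reachable from some $C^i$ but $v\notin C^i$, then $\rho(\1)_v=0$. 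This is exactly what the paper's one-line argument (``the number of paths of length $n$ which never enter a component $C$ with $\xi(C)=\lambda$ is $o(\lambda^n)$'') encodes; your version reaches the same conclusion once this case split is made explicit.
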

\begin{proof}
The number of paths in $\Gamma$ of length $n$
which never enter a component $C$ with $\xi(C)=\lambda$ is $o(\lambda^n)$, so from the
definition of $\widehat{\nu}$ it follows that $\widehat{\nu}$ a.e. $y \in Y$ enters some $C^i$.
If a path leaves some component with $\xi = \lambda$ it can never enter another one
with $\xi = \lambda$, so $\widehat{\nu}$ a.e. $y \in Y$ which enter some $C^i$ never leave.

\medskip

We now prove the second statement of the Lemma. The following proof was suggested by Shigenori Matsumoto.

We fix the notation below for the course of the Lemma (the reader should be warned that it is slightly
incompatible with notation used elsewhere; this is done to avoid a proliferation of subscripts). Let $C_i$
be a component of $\Gamma$ with Perron-Frobenius eigenvalue $\lambda$. Let $Y_i$ be the set of infinite paths in
$\Gamma$ starting at $v_1$ that eventually stay in $C_i$, and let $X_i$ be the set of infinite paths in $C_i$.
There is a measure $\widehat{\mu}_i$ on $X_i$ obtained by restricting $\mu$ on $X$. The measure $\widehat{\mu}_i$
is determined by a stationary measure $\mu_i$ on $C_i$ and a transition matrix $N(i)$, by restricting $\mu$
and $N$. The measure $\mu_i$ is stationary for $N(i)$ (i.e.\ $\mu_i^T N(i) = \mu_i^T$), so $\widehat{\mu}_i$
is shift invariant. Since $C_i$ is a component, $\mu_i^T$ is the only eigenvector of $N(i)$ with eigenvalue $1$,
so $\mu_i$ is extremal in the space of stationary measures. Therefore by the random ergodic theorem 
(see e.g. \cite{Pollicott_Yuri}, Chapter~10) the measure $\widehat{\mu}_i$ on $X_i$ is ergodic.

Now, there is a subset $X_i^*$ of $X_i$ of full measure such that for all $\gamma \in X_i^*$
$$\frac 1 m \sum_0^m \delta_{S^k\gamma} \to \widehat{\mu}_i$$
in the weak$^*$ topology. On the other hand, on $Y_i$ there is a measure $\widehat{\nu}_i$ which is the restriction of
$\widehat{\nu}$. Define $q:Y_i \to X_i$ by
$$q(\gamma) = S^{n(\gamma)}(\gamma)$$
where $n:Y_i \to \mathbb{N}$ satisfies the following condition. Let $\pi:X_i \to C_i$ take each infinite walk to
its initial vertex. Choose $n$ so that $\pi \circ q:Y_i \to C_i$ sends the measure $\widehat{\nu}_i$ on $Y_i$ to
a measure $\mu_q$ on $C_i$ of full support. The measure $q_*\widehat{\nu}_i$ on $X_i$ is obtained from an initial
measure $\mu_q$ and the transition matrix $N(i)$ as in \S~\ref{stationary_markov_chain_subsection}; 
consequently $q_*\widehat{\nu}_i$ and $\mu_i$ are quasi-equivalent
(i.e.\ each is absolutely continuous with respect to the other).

It follows that $Y_i^*:=q^{-1}(X_i^*)$ has full measure with respect to $\widehat{\nu}_i$, and if $\gamma \in Y_i^*$,
then
$$\frac 1 m \sum_0^m \delta_{S^k\gamma} \to \widehat{\mu}_i$$
By Theorem~\ref{clt_component} it follows that the geodesic ray in $G$ associated to any $\gamma \in Y_i^*$ is $E^i,\sigma^i$-typical, and the lemma
is proved.
\end{proof}

Up to this point, we have only used the weak combability of $\phi$. The next Lemma
requires bicombability.

\begin{lemma}\label{typical_compare_lemma}
Let $\gamma$ be an $E,\sigma$-typical geodesic ray in $G$. If $\phi$ is combable, and if
$\gamma'$ is a geodesic ray with the same endpoint in $\partial G$ as $\gamma$, then $\gamma'$ is also
$E,\sigma$-typical. If $\phi$ is bicombable then for any $g \in G$ the translate $g\gamma$
is $E,\sigma$-typical.
\end{lemma}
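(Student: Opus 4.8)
The plan is to reduce both assertions to a single estimate: replacing one geodesic ray by another one that fellow-travels it changes the quantity $\phi(\gamma_{i+n}) - \phi(\gamma_i)$ by a bounded amount, where the bound is independent of $i$ and $n$. Once we have this, the distributions $\omega(n,m)(\gamma')$ and $\omega(n,m)(\gamma)$ differ only by a translation of the $x$-axis by $O(n^{-1/2})$ together with a reweighting coming from the (bounded) discrepancy in how the two rays are indexed, and such perturbations vanish in the double limit $n\to\infty$, $m\to\infty$, so that $\omega(\gamma') = \omega(\gamma) = N(0,\sigma)$ and $E,\sigma$-typicality is preserved.

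\emph{First assertion (changing the geodesic, $\phi$ combable).}
Let $\gamma,\gamma'$ be geodesic rays with the same endpoint in $\partial G$. By the Morse Lemma (Theorem~\ref{delta_hyperbolic_space_facts}(\ref{morse_bullet})), the two rays fellow-travel: there is a constant $C$ so that every $\gamma(i)$ lies within distance $C$ of some $\gamma'(j)$ and vice versa, and the correspondence between the indices is monotone up to bounded error (exactly as in the $C_1,C_2$ estimates in the proof of Lemma~\ref{combing_independent_lemma}). Since $\phi$ is combable it is Lipschitz in $d_L$, so $|\phi(\gamma(i)) - \phi(\gamma'(j))| \le K' C$ whenever $d_L(\gamma(i),\gamma'(j)) \le C$. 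Hence there is a constant $C''$ with $\big|(\phi(\gamma_{i+n}) - \phi(\gamma_i)) - (\phi(\gamma'_{j+n}) - \phi(\gamma'_j))\big| \le C''$ for the matched indices $j = j(i)$; moreover $j$ ranges over an interval of length $m + O(1)$ as $i$ ranges over $1,\dots,m$. Therefore $\omega(n,m)(\gamma')$ is obtained from $\omega(n,m)(\gamma)$ by moving each atom by at most $C'' n^{-1/2}$, deleting and inserting at most $O(1)$ atoms out of $m$, and reindexing. Taking $m\to\infty$ kills the $O(1)$ boundary terms, and taking $n\to\infty$ kills the $C'' n^{-1/2}$ translation, so the limiting distributions coincide and $\gamma'$ is $E,\sigma$-typical.

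\emph{Second assertion (left translates, $\phi$ bicombable).}
Fix $g\in G$. The ray $g\gamma$ is again a geodesic ray (left multiplication is an isometry of $C_S(G)$), with $(g\gamma)(i) = g\cdot\gamma(i)$. Writing $g$ as a word of length $\ell = |g|$ in $S$ and multiplying on the left one generator at a time, the $d_R$-Lipschitz bound for $\phi$ (which is exactly the hypothesis that $\phi$ changes by at most $C$ under left multiplication by a generator, Lemma~\ref{left_and_right_lemma}) gives $|\phi(g\cdot\gamma(k)) - \phi(\gamma(k))| \le \ell C$ for every $k$, with $\ell C$ independent of $k$. Consequently $\big|(\phi((g\gamma)_{i+n}) - \phi((g\gamma)_i)) - (\phi(\gamma_{i+n}) - \phi(\gamma_i))\big| \le 2\ell C$ for all $i,n$, so $\omega(n,m)(g\gamma)$ differs from $\omega(n,m)(\gamma)$ by moving each atom by at most $2\ell C\, n^{-1/2}$ (here the indexing is literally the same). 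Letting $m\to\infty$ and then $n\to\infty$ gives $\omega(g\gamma) = \omega(\gamma) = N(0,\sigma)$, so $g\gamma$ is $E,\sigma$-typical. Combining the two assertions: if the endpoint of $\gamma$ is $E,\sigma$-typical then so is the endpoint of $g\gamma$ for every $g\in G$, i.e. the set of typical boundary points is $G$-invariant.

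\emph{Main obstacle.}
The only genuine subtlety is that typicality is defined via the \emph{double} limit $\lim_{n\to\infty}\lim_{m\to\infty}$, so one must check that a \emph{uniform} (in $i$ and $n$) perturbation of the difference cocycle by an additive constant really does survive both limits — the inner $m\to\infty$ limit to absorb the $O(1)$ index-matching boundary error in the first case, and the outer $n\to\infty$ limit to absorb the $O(n^{-1/2})$ translation. This is a soft weak-$*$ convergence argument (a bounded additive perturbation of a random variable divided by $\sqrt n$ converges to the same limit), but it does use crucially that the fellow-traveling and Lipschitz constants are independent of both $i$ and $n$; that uniformity is precisely what the Morse Lemma and the bicombability hypothesis supply.
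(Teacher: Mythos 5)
Your proposal is correct and follows essentially the same route as the paper: fellow-traveling (Morse Lemma) plus the $d_L$-Lipschitz property handles the change of geodesic, and the uniform bound $d_R(g\gamma_i,\gamma_i)=|g|$ plus the $d_R$-Lipschitz property handles left translation, after which a uniformly bounded perturbation of the increments is absorbed by the $n^{-1/2}$ rescaling in the double limit. The paper's proof is terser (it does not spell out the index-matching or the weak-$*$ stability step that you make explicit), but the substance is identical.
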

\begin{proof}
Let $\gamma$ and $\gamma'$ have the same endpoint. Then there is a constant $C$ such that
$d_L(\gamma_i,\gamma'_i) \le C$ and therefore $|\phi(\gamma_i) - \phi(\gamma'_i)| \le K$ for some $K$
independent of $i$. This shows that $\gamma'$ is $E,\sigma$-typical if $\gamma$ is. Similarly,
if $g \in G$ then $d_R(g\gamma_i,\gamma_i) \le C$ and therefore $|\phi(g\gamma_i) - \phi(\gamma_i)| \le K$
for some $K$ independent of $i$.
\end{proof}

For each component $C^i$, let $\partial^i G$ denote the image of the $E^i,\sigma^i$-typical
$y \in Y^i$ under $Y \to \partial G$. Note that $\widehat{\nu}(\partial^i G) > 0$ for all $i$.
By Theorem~\ref{PS_Radon_Nikodym_lemma} and Lemma~\ref{nu_and_hat_nu}, for any $i,j$ there
is $g \in G$ with $\widehat{\nu}(g\partial^i G \cap \partial^j G) > 0$. Hence by 
Lemma~\ref{typical_compare_lemma} there is an infinite geodesic ray in $G$ which is both
$E^i,\sigma^i$ typical and $E^j,\sigma^j$ typical. But this implies $E^i = E^j$ and $\sigma^i = \sigma^j$.
Since $i$ and $j$ were arbitrary, we have proved

\begin{lemma}\label{means_agree}
The means $E^i$ and standard deviations $\sigma^i$ as above are all equal to some $E,\sigma$.
\end{lemma}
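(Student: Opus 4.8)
The plan is to upgrade the component-by-component central limit theorem of Theorem~\ref{clt_component} to a statement about $\phi$ itself by exploiting ergodicity of the $G$-action on $(\partial G,\nu)$ (Theorem~\ref{PS_Radon_Nikodym_lemma}). The idea is that any two of the pairs $(E^i,\sigma^i)$ can be witnessed simultaneously on a \emph{single} geodesic ray, and since the distribution $\omega(\gamma)$ attached to such a ray (together with the centering parameter used to define it) is unique, this forces $E^i=E^j$ and $\sigma^i=\sigma^j$.

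First I would record the previous Lemma in boundary terms. For each component $C^i$ with $\xi(C^i)=\lambda$, let $\partial^i G\subset\partial G$ be the image under $Y\to\partial G$ of the set of $E^i,\sigma^i$-typical elements of $Y^i$. Since $\widehat\nu$-a.e.\ $y\in Y^i$ is typical and $\widehat\nu(Y^i)>0$ (the number of length-$n$ paths entering $C^i$ is $\Theta(\lambda^n)$, while those missing every component with biggest eigenvalue $\lambda$ number only $o(\lambda^n)$, by the estimates behind Lemma~\ref{support_of_measure}), one must check that pushing forward to $\partial G$ keeps the measure positive; this is immediate from the explicit cylinder formula relating $\widehat\nu$ on $Y$ to $\widehat\nu$ on $\partial G$ derived earlier, together with Lemma~\ref{nu_and_hat_nu}. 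So $\widehat\nu(\partial^i G)>0$ for every $i$.

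Now fix $i$ and $j$. By Lemma~\ref{nu_and_hat_nu} the measure classes of $\widehat\nu$ and $\nu$ coincide, so $\partial^i G$ and $\partial^j G$ have positive $\nu$-measure; by Theorem~\ref{PS_Radon_Nikodym_lemma} the $G$-action on $(\partial G,\nu)$ preserves the measure class and is ergodic, so there is $g\in G$ with $\nu(g\,\partial^i G\cap\partial^j G)>0$, hence $\widehat\nu(g\,\partial^i G\cap\partial^j G)>0$ and in particular this intersection is nonempty. Choose $\xi$ in it. Since $g^{-1}\xi\in\partial^i G$, there is an $E^i,\sigma^i$-typical geodesic ray $\gamma_0$ in $G$ ending at $g^{-1}\xi$; by the bicombable half of Lemma~\ref{typical_compare_lemma}, the translate $\gamma:=g\gamma_0$, which ends at $\xi$, is again $E^i,\sigma^i$-typical. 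On the other hand, since $\xi\in\partial^j G$, some geodesic ray ending at $\xi$ is $E^j,\sigma^j$-typical; by the combable half of Lemma~\ref{typical_compare_lemma} (same endpoint), $\gamma$ is then also $E^j,\sigma^j$-typical. Thus $\gamma$ is simultaneously $E^i,\sigma^i$-typical and $E^j,\sigma^j$-typical. If $E^i\ne E^j$, then the two centered distributions $\omega(n,m)(\gamma)$ differ by a translation of size $\sqrt n\,|E^i-E^j|\to\infty$, so they cannot both converge; hence $E^i=E^j$, and then the two sequences of distributions are literally equal, forcing $\sigma^i=\sigma^j$. Since $i,j$ were arbitrary, all the $E^i$ equal a common value $E$ and all the $\sigma^i$ a common value $\sigma$.

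The main obstacle is not the ergodicity step, which is a one-line appeal to Coornaert's theorem, but the interplay of typicality with the two operations ``change the ray but keep the endpoint'' and ``left-translate by $g$'': the first is $d_L$-Lipschitz behavior of $\phi$, the second genuinely needs $d_R$-Lipschitz behavior, and this is the only place in this lemma where the full bicombability hypothesis enters. The proof must apply the correct half of Lemma~\ref{typical_compare_lemma} to the correct ray and in the correct order. A secondary point to get right is the positivity $\widehat\nu(\partial^i G)>0$, i.e.\ that the map $Y\to\partial G$ does not crush a positive-measure set of typical paths into a $\widehat\nu$-null subset of $\partial G$.
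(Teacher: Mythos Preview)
Your proposal is correct and follows essentially the same argument as the paper: define $\partial^i G$ as the boundary image of the typical rays in $Y^i$, use Coornaert's ergodicity (via Lemma~\ref{nu_and_hat_nu} and Theorem~\ref{PS_Radon_Nikodym_lemma}) to find $g$ with $g\,\partial^i G\cap\partial^j G$ of positive measure, and then apply the two halves of Lemma~\ref{typical_compare_lemma} to produce a single ray that is simultaneously $E^i,\sigma^i$- and $E^j,\sigma^j$-typical. Your write-up is in fact more careful than the paper's, explicitly separating the $d_L$- and $d_R$-Lipschitz steps and spelling out why double typicality forces $E^i=E^j$ and $\sigma^i=\sigma^j$.
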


Recall that the measure $\widehat{\nu}$ on $Y$ pushes forward to a measure on $Y_n$ by projection $Y \to Y_n$
and then to a measure $\widehat{\nu}$ on $G_n$ for all $n$. Note that $\widehat{\nu}$ on $G_n$
does not depend on the particular choice of a digraph $\Gamma$ parameterizing $L$, but just on $L$
itself. Lemma~\ref{means_agree} and
Theorem~\ref{clt_component} together imply our main result:

\begin{theorem}[Central Limit Theorem for bicombable functions]\label{clt_bicombable}
Let $\phi$ be a bicombable function on a word hyperbolic group with respect to some combing $L$. 
Let $\overline{\phi}_n$ be the
value of $\phi$ on a random word of length $n$ with respect to the $\widehat{\nu}$ measure. Then
there is convergence in the sense of distribution
$$\lim_{n \to \infty} n^{-1/2}(\overline{\phi}_n - nE) \to N(0,\sigma)$$
for some $\sigma \ge 0$ where $E$ denotes the mean of $d\phi$ on $\Gamma$ with respect
to the stationary measure $\mu$. In particular, $E$ and $\sigma$ are algebraic.
\end{theorem}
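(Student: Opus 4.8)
The plan is to realize $\overline{\phi}_n$ as the law of an additive functional of a genuine Markov chain and then apply Theorem~\ref{clt_component} componentwise. From the cylinder formula $\widehat{\nu}(p^{-1}(y)) = \lambda^{-|y|}\rho(\1)_v$ (with $v$ the terminal vertex of $y$) established above, one computes that the $\widehat{\nu}$-conditional probability of stepping from $v_i$ to $v_j$ equals $N_{ij}$; hence, after normalizing $\widehat{\nu}$ to a probability measure, $\overline{\phi}_n$ is exactly the law of $S_n := \sum_{k=0}^n d\phi(x_k)$, where $x_0 = v_1, x_1, x_2, \dots$ is the Markov chain with transition matrix $N$ started at $v_1$.

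First I would control the absorption time. By Lemma~\ref{support_of_measure}, and the fact that leaving a component with $\xi = \lambda$ forces the number of subsequent continuations to be $o(\lambda^n)$, $\widehat{\nu}$-a.e. path enters exactly one component $C^i$ at a finite time $\tau$ and stays there forever. The length-$t$ paths from $v_1$ avoiding all $C^i$ number $O(P(t)\mu_0^t)$, where $\mu_0 := \max\{\xi(C) : \xi(C) < \lambda\} < \lambda$ and $P$ is a polynomial (a path visits each transient vertex at most once, so most of its length is spent inside small components), so the cylinder formula yields $\widehat{\nu}(\tau > t) = O(P(t)(\mu_0/\lambda)^t)$, an exponential tail bound uniform in $n$; in particular $\tau = O_p(1)$, so $\tau/\sqrt{n} \to 0$ in probability. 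Writing
$$S_n - nE = \Big(\sum_{k=0}^{\tau} d\phi(x_k) - \tau E\Big) + \Big(\sum_{k=\tau+1}^{n} d\phi(x_k) - (n-\tau)E\Big) + \tau E,$$
the first bracket is bounded by $(\tau+1)\|d\phi\|_\infty + |\tau E|$ and the last by $|\tau E|$, so both are $O_p(1)$, hence $o_p(\sqrt{n})$.

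It remains to treat the middle term. Fix a large cutoff $T$; since $\widehat{\nu}(\tau > T)$ is small uniformly in $n$, we may restrict to $\{\tau \le T\}$ at the cost of a total-mass error independent of $n$ tending to $0$ as $T \to \infty$. On this event, condition further on $\{\tau = t,\ x_\tau = v,\ x \text{ eventually stays in } C^i\}$, now a finite family of events: the subsequent trajectory is the $N(i)$-Markov chain on $C^i$ of length $n - t$ started at the fixed vertex $v$, and the middle term is its centered additive functional. Theorem~\ref{clt_component} gives the CLT for this functional from the stationary initial distribution with mean $E^i$ and variance $\sigma^i$, and by standard finite-Markov-chain theory the same CLT (same $E^i,\sigma^i$) holds from any fixed initial vertex, since changing the initial distribution alters the partial sums by an $L^1$-bounded amount (see \cite{Romanovskii}). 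Because $\tau/\sqrt{n}\to 0$ lets us replace $n-t$ by $n$ in the normalization, and because $E^i = E$, $\sigma^i = \sigma$ for every $i$ by Lemma~\ref{means_agree}, each conditional middle term converges in distribution to $N(0,\sigma)$; assembling the finite mixture over $\{\tau \le T\}$ and then letting $T \to \infty$ gives $n^{-1/2}(\overline{\phi}_n - nE) \to N(0,\sigma)$. Finally $E = \int_\Gamma d\phi\,d\mu$ because $\mu = \sum_i \mu(C^i)\,\mu^i$ with $\int d\phi\,d\mu^i = E^i = E$, and $E,\sigma$ are algebraic since each $E^i,\sigma^i$ is, by Theorem~\ref{clt_component}. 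The only points needing genuine care beyond the results already in hand are the uniform exponential tail bound on the absorption time $\tau$ and the transfer of Theorem~\ref{clt_component} to a non-stationary entry distribution; both are routine for finite-state chains, but they are where the bookkeeping concentrates.
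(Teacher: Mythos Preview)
Your argument is correct and is essentially the paper's own proof, carried out with more explicit bookkeeping: both condition on the finite prefix needed to enter a maximal component, invoke Theorem~\ref{clt_component} on the tail, and use Lemma~\ref{means_agree} to identify all the $(E^i,\sigma^i)$. The paper phrases the conditioning as ``fix $g\in G_m$ with $v_g$ in the support of $\mu$ and let $m$ be large,'' whereas you phrase it via the absorption time $\tau$ with an exponential tail bound and then let the cutoff $T\to\infty$; these are the same device, and your version makes explicit two points the paper leaves implicit (the tail bound on $\tau$ and the transfer of the CLT from the stationary start to a fixed entry vertex).
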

\begin{proof}
Fix some $g \in G_m$ which corresponds to an element $y \in Y_m$ such that the
last vertex $v_g = S^m y \in X_0 = \Gamma$ is in the support of $\mu$. For any
$n \ge m$, the shift map $S^m$ takes the subset $p^{-1}(y) \subset Y_n$ with
its $\widehat{\nu}$ measure isomorphically to the set of walks in $X$ of length $n-m$ starting
at $v_g$ with their $\mu$ measure, up to scaling the measures by a constant factor. 
Fixing $g$ and letting $n$ go to infinity, we
obtain the desired Gaussian distribution for the values of $\phi$ on $\cone(g) \cap G_n$ 
as $n \to \infty$ in the $\widehat{\nu}$ measure. 

For any $\epsilon$, there is $m$ such that the set of $g \in G_m$ 
with $v_g$ not in the support of $\mu$ has $\widehat{\nu}$ measure at most $\epsilon$. 
Since $\epsilon$ was arbitrary, we are done.
\end{proof}

The following corollary does not make reference to the measure $\widehat{\nu}$.
\begin{corollary}
Let $\phi$ be a bicombable function on a word-hyperbolic group $G$. Then there is a constant $E$ such that
for any $\epsilon > 0$ there is a $K$ and an $N$ so that if $G_n$ denotes the set of elements of length $n\ge N$, 
there is a subset $G_n'$ with
$|G_n'|/|G_n| \ge 1- \epsilon$, so that for all $g \in G_n'$, there is an inequality
$$|\phi(g) - nE| \le K \cdot \sqrt{n}$$
\end{corollary}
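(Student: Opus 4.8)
The plan is to deduce this corollary from the Central Limit Theorem (Theorem~\ref{clt_bicombable}) by converting convergence in distribution with respect to the $\widehat{\nu}$ measure into a statement about the \emph{uniform} (counting) measure on $G_n$, which is what the corollary really concerns. The obstacle is precisely the discrepancy between these two measures on $G_n$: the $\widehat{\nu}$ measure weights an element $g\in G_k$ by $\lambda^{-k}$ times the ``mass of its cone at infinity'', whereas the corollary speaks of the naive ratio $|G_n'|/|G_n|$.

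The key estimate will be that for any $g\in G_m$ whose corresponding vertex $v_g\in\Gamma$ lies in the support of $\mu$, the $\widehat{\nu}$-conditional distribution on $\cone(g)\cap G_n$ and the uniform distribution on $\cone(g)\cap G_n$ are comparable: both are governed, up to multiplicative constants bounded independently of $n$, by the number $\langle (M^T)^{n-m} v_g, \1\rangle = \Theta(\lambda^{n-m})$ of paths of length $n-m$ out of $v_g$, by Coornaert's Theorem~\ref{Coornaert_theorem} applied to the subgraph. Concretely, first I would fix $\epsilon$ and, using Theorem~\ref{clt_bicombable}, choose $m$ so that the $g\in G_m$ with $v_g$ outside the support of $\mu$ carry $\widehat{\nu}$-measure at most $\epsilon$; by the comparability of $\widehat{\nu}$ and the counting measure at level $m$ (both are $\Theta(\lambda^{-m}|G_n|)$-weighted over cones, again by Coornaert), this same ``bad'' set carries at most $C\epsilon$ of the elements of $G_n$ for all large $n$. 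For each remaining $g\in G_m$, the proof of Theorem~\ref{clt_bicombable} gives a Gaussian limit for $\phi$ on $\cone(g)\cap G_n$ in the $\widehat{\nu}$ measure; combined with the $\mu$-vs-uniform comparison on the subgraph hanging off $v_g$, one upgrades this to a Gaussian limit (with the \emph{same} $E$ and $\sigma$, since both $E^i=E$ and $\sigma^i=\sigma$ by Lemma~\ref{means_agree}) in the \emph{uniform} measure on $\cone(g)\cap G_n$.

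From this, for each good $g$ and each large $n$ one extracts a subset of $\cone(g)\cap G_n$ of relative size $\ge 1-\epsilon$ on which $|\phi(h)-nE|\le K\sqrt{n}$, where $K$ depends only on $\epsilon$ (through the Gaussian tail bound $\mathbf{P}(|Z|>K)\le\epsilon$, $Z\sim N(0,\sigma)$) and not on $g$: here one must be slightly careful that the convergence is uniform over the finitely many residual $g\in G_m$, which it is, since there are finitely many components $C^i$ and finitely many such $g$. Taking $G_n'$ to be the union of these subsets over all good $g\in G_m$, together with the crude bound that the bad $g$ contribute at most $C\epsilon|G_n|$ elements, gives $|G_n'|/|G_n|\ge 1-(C+1)\epsilon$ for $n\ge N$. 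Renaming $\epsilon$ finishes the proof.

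The main obstacle I anticipate is making the comparison between $\widehat{\nu}$ and the counting measure genuinely uniform in $n$ while passing to the cone decomposition: one needs that the error in $|G_n|\asymp\lambda^n$ and in the corresponding count for each subgraph does not accumulate as the number of cones (which is $|G_m|$, fixed) stays bounded --- this is fine because $m$ is fixed before $n\to\infty$, but it must be stated carefully. Everything else is a routine combination of Theorem~\ref{clt_bicombable}, Theorem~\ref{Coornaert_theorem}, and a Gaussian tail estimate.
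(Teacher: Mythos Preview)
The paper offers no proof of this corollary, so there is no ``paper's approach'' to compare against; you have correctly identified that the passage from the $\widehat{\nu}$ measure to the counting measure on $G_n$ is the whole content, and your overall strategy (decompose into cones, invoke Theorem~\ref{clt_component} on each, reassemble) is the right one.

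There is, however, a genuine gap in your ``key estimate''. For $g\in G_m$ with $v_g$ in the support of $\mu$ (hence in a maximal component $C$), the $\widehat{\nu}$-conditional distribution on $\cone(g)\cap G_n$ assigns $h$ a weight proportional to $\rho(\1)_{v_h}$, and this \emph{vanishes} whenever $v_h$ lies outside every maximal component. By Lemma~\ref{support_of_measure} any vertex reachable from $C$ but not in $C$ has this property, and if there is even one edge leaving $C$ then a \emph{positive} proportion (not $o(1)$) of the paths of length $n-m$ out of $v_g$ end at such vertices. So $\widehat{\nu}$ and the uniform measure on $\cone(g)\cap G_n$ are not comparable, and Coornaert's theorem (which is about $|G_n|$, not about arbitrary subgraphs) does not rescue this.

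The fix is to condition on \emph{both} ends of the path. Because $\Gamma$ is almost semisimple, the subgraph obtained by deleting the maximal components has spectral radius $\lambda'<\lambda$; a short count then shows that for any $\epsilon$ there is $T$ so that all but $\epsilon|G_n|$ elements $h\in G_n$ satisfy $\gamma_h(t)\in C$ for some maximal $C$ and every $T\le t\le n-T$. For such $h$ one has $|\phi(h)-\sum_{t=T}^{n-T}d\phi(\gamma_h(t))|=O(T)$, and the middle sum runs over a path \emph{inside} $C$. Now the decisive observation: conditioned on fixed endpoints $v,w\in C$ and fixed length, the $N$-chain and the uniform measure on paths in $C$ from $v$ to $w$ \emph{coincide}, since every such path has $N$-probability $\lambda^{-(n-2T)}\rho(\1)_w/\rho(\1)_v$, independent of the path. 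Hence Theorem~\ref{clt_component} transfers directly to the counting measure on these middle segments, Lemma~\ref{means_agree} gives the same $E,\sigma$ for every $C$, and your reassembly argument then goes through unchanged.
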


As a special case, let $S_1,S_2$ be two finite generating sets for $G$. Word length
in the $S_2$ metric is a bicombable function with respect to a combing $L_1$ for the $S_1$ generating
set. Hence:

\begin{corollary}\label{genset_length_compare}
Let $S_1$ and $S_2$ be finite generating sets for a word-hyperbolic group $G$. 
There is an algebraic number $\lambda_{1,2}$ such that for any
$\epsilon > 0$, there is a $K$ and an $N$ so that if $G_n$ denotes the set of elements of length $n \ge N$,
there is a subset $G_n'$ with $|G_n'|/|G_n| \ge 1-\epsilon$, so that for all $g \in G_n'$ there is an
inequality $$\bigl| |g|_{S_1} - \lambda_{1,2}|g|_{S_2} \bigr| \le K \cdot \sqrt{n}$$
\end{corollary}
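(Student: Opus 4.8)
The plan is to apply the preceding Corollary to the bicombable function given by $S_2$-word-length and then to invert the resulting linear relation. First I would take $\phi:=|\cdot|_{S_2}\colon G\to\Z$. By Example~\ref{word_length_bicombable} this $\phi$ is bicombable with respect to any combing of $G$ for the generating set $S_2$, and since (bi-)combability does not depend on the choice of generating set or combing (Lemma~\ref{combing_independent_lemma}), $\phi$ is likewise bicombable with respect to a combing $L_1$ of $G$ for the generating set $S_1$. Equivalently, $\phi$ is visibly Lipschitz in both the left and right $S_1$-metrics, with constant $c:=\max_{s\in S_1}|s|_{S_2}$, and its weak combability with respect to $L_1$ follows from Lemma~\ref{combing_independent_lemma}.

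Next I would feed $\phi$ into the preceding Corollary, applied to $G$ with its $S_1$ word metric. This yields a constant $E$ --- the mean of $d\phi$ on $\Gamma$ with respect to the stationary measure $\mu$, algebraic by Theorem~\ref{clt_bicombable} --- such that for every $\epsilon>0$ there are $K_0$ and $N$ with the following property: for $n\ge N$ the set $G_n$ of elements of $S_1$-length $n$ contains a subset $G_n'$ with $|G_n'|/|G_n|\ge 1-\epsilon$ on which $\bigl||g|_{S_2}-nE\bigr|\le K_0\sqrt n$. To rearrange this I need $E>0$. Since every element of $S_1$ has $S_2$-length at most $c$, every $g\in G_n$ has $|g|_{S_2}\ge n/c$, so (for $G$ infinite, so that $G_n\ne\nul$ for $n$ large) the displayed estimate forces $E\ge 1/c>0$; for $G$ finite the statement is vacuous. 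Setting $\lambda_{1,2}:=1/E$, which is algebraic, and using $|g|_{S_1}=n$ for $g\in G_n'$, we get
\[
\bigl| |g|_{S_1}-\lambda_{1,2}|g|_{S_2}\bigr| = \lambda_{1,2}\,\bigl| nE-|g|_{S_2}\bigr| \le \lambda_{1,2}K_0\sqrt n=:K\sqrt n,
\]
which is the assertion.

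There is no essential obstacle: the analytic heart is entirely contained in the preceding Corollary (ultimately Theorem~\ref{clt_bicombable}), and the only points needing verification are that $S_2$-word-length is a bicombable function for a combing of $S_1$ --- the combination of Example~\ref{word_length_bicombable} with the generating-set-independence Lemma~\ref{combing_independent_lemma} --- and the strict positivity of $E$, which is what lets one trade a statement about $|g|_{S_2}$ for one about $|g|_{S_1}$. The one case that warrants a sentence of care is the elementary one, where growth is not exponential so Coornaert's Theorem and the CLT do not apply directly; there, however, the comparison of the two word metrics is classical and in fact sharper (error $O(1)$).
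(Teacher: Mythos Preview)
Your approach is exactly the paper's: apply the preceding Corollary to $\phi=|\cdot|_{S_2}$, which is bicombable with respect to an $S_1$-combing, and then rescale. You have in fact written out more than the paper does, since the paper's proof is the single sentence preceding the statement and leaves the inversion step (and hence the positivity of $E$) implicit.

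One small slip to fix: your lower bound on $|g|_{S_2}$ uses the wrong direction of the bilipschitz comparison. From $c:=\max_{s\in S_1}|s|_{S_2}$ you get $|g|_{S_2}\le c\,|g|_{S_1}=cn$, not $|g|_{S_2}\ge n/c$. To force $E>0$ you want instead $c':=\max_{s\in S_2}|s|_{S_1}$, which gives $|g|_{S_1}\le c'\,|g|_{S_2}$ and hence $|g|_{S_2}\ge n/c'$; the rest of your positivity argument then goes through unchanged. Your remark on the elementary case is also well taken, since the machinery behind Theorem~\ref{clt_bicombable} assumes $\lambda>1$.
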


\begin{remark}
A counting argument shows $\lambda_{1,2} \ge \log{\lambda_1}/\log{\lambda_2}$ where the
number of words of length $n$ in the $S_i$ metric is $\Theta(\lambda_i^n)$. 
But equality should only hold in very special
circumstances, and in fact the number $\lambda_{1,2}\log{\lambda_2}\log{\lambda_1}^{-1}$ is probably
very interesting in general. Such numbers have been studied in free groups in the special
case where $S_1$ and $S_2$ are free bases, by \cite{Kaimanovich_Kapovich_Schupp}.
\end{remark}

If $\phi$ is a quasimorphism, then $|\phi(g) + \phi(g^{-1})| \le \text{const.}$ so if $S$ is symmetric,
then necessarily $E$ as above is equal to $0$. Hence:

\begin{corollary}
Let $\phi$ be a bicombable quasimorphism on a word-hyperbolic group $G$. Let $\overline{\phi}_n$ be
the value of $\phi$ on a random word of length $n$ with respect to the $\widehat{\nu}$ measure. Then there
is convergence in the sense of distribution
$$\lim_{n \to \infty} n^{-1/2}\overline{\phi}_n \to N(0,\sigma)$$
for some $\sigma \ge 0$.
\end{corollary}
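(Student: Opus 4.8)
The plan is to reduce the statement directly to Theorem~\ref{clt_bicombable} and then show that for a quasimorphism the drift constant $E$ is forced to vanish. A bicombable quasimorphism is in particular a bicombable function, so Theorem~\ref{clt_bicombable} already supplies real numbers $E$ and $\sigma\ge 0$ with $n^{-1/2}(\overline{\phi}_n-nE)\to N(0,\sigma)$ in the sense of distribution, with respect to the $\widehat{\nu}$ measure. Hence everything comes down to proving $E=0$; once this is known, $nE$ may simply be deleted from the displayed convergence, which is precisely the asserted conclusion $n^{-1/2}\overline{\phi}_n\to N(0,\sigma)$.

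To prove $E=0$ I would proceed as follows. First record the elementary fact that a quasimorphism satisfies $|\phi(g)+\phi(g^{-1})|\le C_0$ for all $g$, where $C_0$ depends only on $\phi$: apply the defining inequality to the pair $g,g^{-1}$, note $\phi(\id)=\phi(gg^{-1})$, and use $|\phi(\id)|\le D(\phi)$ (the quasimorphism inequality for $\id,\id$). Next apply the preceding corollary (the one bounding $|\phi(g)-nE|$ for a $(1-\epsilon)$-fraction of $g\in G_n$, stated in counting measure) with, say, $\epsilon=1/3$: there are $K$ and $N$ so that for every $n\ge N$ there is a subset $G_n'\subseteq G_n$ with $|G_n'|/|G_n|\ge 2/3$ and $|\phi(g)-nE|\le K\sqrt{n}$ for all $g\in G_n'$. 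Since $S$ is symmetric, $g\mapsto g^{-1}$ is a length-preserving bijection of $G$, hence of $G_n$, so $G_n'\cap(G_n')^{-1}\ne\varnothing$ (each set occupies more than half of $G_n$). Choosing $g$ in this intersection, both $|\phi(g)-nE|\le K\sqrt{n}$ and $|\phi(g^{-1})-nE|\le K\sqrt{n}$ hold, whence
$$2n|E| = \bigl|(nE-\phi(g))+(nE-\phi(g^{-1}))+(\phi(g)+\phi(g^{-1}))\bigr| \le 2K\sqrt{n}+C_0.$$
Dividing by $n$ and letting $n\to\infty$ forces $E=0$, and the proof is complete.

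I do not anticipate a genuine obstacle here, since all the substantive work — constructing the stationary measure, matching the drifts $E^i$ and variances $\sigma^i$ across the recurrent components via ergodicity at infinity, and passing from the $\widehat{\nu}$ measure to counting measure — is already contained in Theorem~\ref{clt_bicombable} and its corollary. The one point demanding care is the left/right bookkeeping: bicombability (Lipschitz in \emph{both} invariant metrics) is exactly what lets the counting-measure corollary apply verbatim to $\phi$, while the symmetry of $S$ together with the identity $\phi(g^{-1})=-\phi(g)+O(1)$ is what makes the only two candidate drift values, $E$ and $-E$, coincide. As a variant of the same idea, one can instead argue that $E=\lim_n\tfrac1n\int_{G_n}\phi\,d\pi_n$ (using the corollary and the bound $|\phi(g)|\le C|g|$ coming from $d_L$-Lipschitzness, where $\pi_n$ is the counting probability measure on $G_n$), observe that $\pi_n$ is inversion-invariant so that $\int_{G_n}\phi\circ\text{inv}\,d\pi_n=\int_{G_n}\phi\,d\pi_n$, and then use $\phi\circ\text{inv}=-\phi+O(1)$ to get $E=-E$.
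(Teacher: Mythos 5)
Your proposal is correct and follows essentially the same route as the paper: the paper's entire justification is the one-line observation that $|\phi(g)+\phi(g^{-1})|\le D(\phi)+|\phi(\id)|$ together with the symmetry of $S$ forces $E=0$, after which Theorem~\ref{clt_bicombable} gives the conclusion. Your pigeonhole argument via $G_n'\cap(G_n')^{-1}\ne\varnothing$ is a clean and valid way of filling in the detail the paper leaves implicit.
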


\section{H\"older quasimorphisms}\label{Holder_section}

The distribution of quasimorphisms on free groups has been studied by
Horsham-Sharp. We briefly describe their results. Let $F$ denote a free group
(on some finite generating set).

\begin{definition}
For any $g,a \in F$ and any $\psi:F \to \R$ define
$$\Delta_a \psi(g) = \psi(g) - \psi(ag)$$
For $x,y \in F$ let $(x|y)$ denote the Gromov product
$$(x|y):=(|x| + |y| - |x^{-1}y|)/2$$
A quasimorphism $\psi$ on $F$ is {\em H\"older} if for any $a \in F$ there are
constants $C,c>0$ such that for any $x,y \in F$ there is an inequality
$$|\Delta_a \psi(x) - \Delta_a \psi(y)| \le Ce^{-c(x|y)}$$
\end{definition}

Note that the constants $C,c$ depend on $a$ but not on $x$ or $y$.
The main theorem of \cite{Horsham_Sharp}, which also appeared in Matthew Horsham's PhD thesis, is
the following:

\begin{theorem}[Horsham-Sharp \cite{Horsham_Sharp}]
Let $\psi$ be a H\"older quasimorphism on a free group. If $\overline{\psi}_n$ denotes the
value of $\psi$ on a random word in $F$ of length $n$ (with respect to a standard generating set)
then there is convergence in the sense of distributions
$$n^{-1/2}\overline{\psi}_n \to N(0,\sigma)$$
for some $\sigma$.
\end{theorem}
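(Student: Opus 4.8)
The plan is to recast $\psi$ along a reduced word as a Birkhoff sum of a H\"older potential over a topologically mixing subshift of finite type, and then invoke the classical central limit theorem for such systems. Model $F$ (with symmetric generating set $S$) by the digraph $\Gamma$ whose vertices are the elements of $S$, with an edge $s\to t$ whenever $t\ne s^{-1}$; then paths in $\Gamma$ are exactly reduced words, so $\Gamma$ parameterizes the geodesic combing $L$ of reduced words, and the associated one-sided subshift $X^+$ of reduced sequences is topologically mixing of finite type. The uniform measure on $G_n$ (equivalently, on length-$n$ paths in $\Gamma$) converges weak-$*$ to the Parry measure $\mu$, the measure of maximal entropy, which is a Markov measure and hence a Gibbs state for a locally constant — in particular H\"older — potential. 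Exactly as in Coornaert's theorem (here provable by an elementary counting argument), the ratio of the uniform measure of an $n$-cylinder to its $\mu$-measure is bounded above and below, so a central limit theorem with respect to $\mu$ transfers to one with respect to the uniform measures on $G_n$.

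The heart of the matter is to produce a H\"older function $f:X^+\to\R$ whose Birkhoff sums track $\psi$. Using Convention~\ref{word_convention} for the letters $\xi(i)$ of a sequence $\xi\in X^+$, set
$$f(\xi) = \lim_{m\to\infty}\Big(\psi\big(\overline{\xi(1)\cdots\xi(m)}\big) - \psi\big(\overline{\xi(2)\cdots\xi(m)}\big)\Big),$$
and observe that the bracketed expression is $\Delta_a\psi(g_m)$ with $a=\overline{\xi(1)}^{-1}\in S$ and $g_m=\overline{\xi(1)\cdots\xi(m)}$. Since the $g_m$ converge to the endpoint of $\xi$ in $\partial F$, the Gromov products $(g_m\mid g_{m'})$ tend to infinity, so the H\"older hypothesis makes the sequence Cauchy and $f$ well defined; $\Delta_a\psi$ is bounded uniformly over generators $a$ by the quasimorphism inequality, so $f$ is bounded, and if $\xi,\eta$ agree in their first $N$ letters then $a$ is common to both and $(g_m\mid h_m)\ge N$, whence $|f(\xi)-f(\eta)|\le Ce^{-cN}$; thus $f$ has exponentially decaying variation and is H\"older. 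Telescoping gives
$$S_nf(\xi):=\sum_{i=0}^{n-1}f(S^i\xi) = \lim_{m\to\infty}\Big(\psi\big(\overline{\xi(1)\cdots\xi(m)}\big) - \psi\big(\overline{\xi(n+1)\cdots\xi(m)}\big)\Big),$$
and since $\overline{\xi(1)\cdots\xi(m)}=\overline{\xi(1)\cdots\xi(n)}\cdot\overline{\xi(n+1)\cdots\xi(m)}$ is a reduced product, the defect bound yields $\big|S_nf(\xi)-\psi\big(\overline{\xi(1)\cdots\xi(n)}\big)\big|\le D(\psi)$, uniformly in $n$.

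To finish, I would first note that the drift vanishes: the reversal-and-invert bijection $w(1)\cdots w(n)\mapsto w(n)^{-1}\cdots w(1)^{-1}$ of length-$n$ reduced words carries $\overline w$ to $\overline w^{-1}$, so the $G_n$-expectation of $\psi(\overline w)$ equals that of $\psi(\overline w^{-1})=-\psi(\overline w)+O(1)$, forcing $\int f\,d\mu=0$. Then apply the central limit theorem for H\"older potentials on topologically mixing subshifts of finite type (proved via the spectral gap of the Ruelle transfer operator and Nagaev's perturbation argument; see \cite{Pollicott_Yuri}): there is $\sigma\ge 0$ with $n^{-1/2}S_nf\to N(0,\sigma)$ in distribution with respect to $\mu$, with $\sigma=0$ precisely when $f$ is cohomologous to a constant. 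Transferring from $\mu$ to the uniform measures on $G_n$ as above, and using that $\psi(\overline{w}_n)=S_nf+O(1)$ so that the $O(1)$ discrepancy is killed by the $n^{-1/2}$ rescaling, yields $n^{-1/2}\overline{\psi}_n\to N(0,\sigma)$.

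The main obstacle is the construction of the genuine H\"older potential $f$ whose Birkhoff sums approximate $\psi$ to within $O(1)$: this is where the two hypotheses on $\psi$ play complementary roles, the H\"older condition being exactly what is needed for the defining limit to converge and for $f$ to inherit exponentially decaying variation, and the bounded-defect (quasimorphism) property being what controls the telescoping error and forces the drift to vanish. Once $f$ is in hand, the remainder is off-the-shelf thermodynamic formalism, structurally parallel to the role played by Theorem~\ref{clt_component} and Coornaert's ergodicity theorem in the proof of Theorem~\ref{clt_bicombable}.
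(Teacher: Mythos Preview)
The paper does not give its own proof of this theorem: it is stated as a result of Horsham--Sharp \cite{Horsham_Sharp} (and Horsham's thesis) and simply quoted for context in \S~\ref{Holder_section}, so there is nothing in the paper to compare your argument against directly.

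That said, your sketch is correct and is essentially the approach one expects Horsham--Sharp to take: recode reduced words as a topologically mixing subshift of finite type, use the H\"older hypothesis on $\psi$ to manufacture a genuine H\"older potential $f$ on the one-sided shift whose Birkhoff sums track $\psi$ to within $O(1)$, kill the drift via the $w\mapsto w^{-1}$ symmetry, and then invoke the Ruelle--Nagaev transfer operator CLT. Two small remarks. First, in the free group with the standard symmetric generating set the Parry measure on length-$n$ cylinders \emph{equals} the uniform measure on $G_n$ (each reduced word of length $n$ has $\mu$-mass $(2k)^{-1}(2k-1)^{-(n-1)}=|G_n|^{-1}$), so your ``bounded ratio'' transfer step is in fact an equality and no Coornaert-type estimate is needed. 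Second, the passage from the CLT for $S_nf$ on the infinite shift $(X^+,\mu)$ to the CLT for $\psi$ on the finite sets $G_n$ uses that the uniform law on $G_n$ is exactly the marginal of $\mu$ on the first $n$ coordinates; you use this implicitly, and it is worth saying explicitly. Your identification of the respective roles of the H\"older condition (existence and regularity of $f$) and the quasimorphism condition (telescoping error and vanishing drift) is exactly right.
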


These results can also be generalized to fundamental groups of closed (hyperbolic) surfaces.
Big (Brooks) counting quasimorphisms on free groups are H\"older, since $\Delta_a\psi(x) = \Delta_a\psi(y)$
whenever $(x|y)$ is bigger than $|a|$. But small counting quasimorphisms (i.e. counting quasimorphisms where
copies must be disjoint) are not, as the following
example shows:

\begin{example}
Let $\phi$ be the small counting quasimorphism for the word $abab$. Then
$$\phi(\underbrace{babab\cdots ab}_{4n+1}) = n, \quad \phi(\underbrace{ababab\cdots ab}_{4n+2}) = n$$
but
$$\phi(\underbrace{babab\cdots ab}_{4n+3}) = n, \quad \phi(\underbrace{ababab\cdots ab}_{4n+4}) = n+1$$
\end{example}

Of course, since small counting quasimorphisms are bicombable, 
Theorem~\ref{clt_bicombable} applies. Note that the measure $\widehat{\nu}$ agrees with the
uniform measure in a free group with the standard generating set.

It seems plausible that Horsham-Sharp's methods apply to arbitrary H\"older 
quasimorphisms on hyperbolic groups, though we have not pursued this.

\section{Acknowledgments}

We would like to thank Ian Agol, Ilya Kapovich, Shigenori Matsumoto, Curt McMullen, Mark Pollicott,
Richard Sharp, Amie Wilkinson and the anonymous referees for useful comments and corrections.
This research was largely carried out while the first author was at the
Tokyo Institute of Technology, hosted by Sadayoshi Kojima. He thanks TIT
and Professor Kojima for their hospitality.  The second author
would like to thank Caltech for their hospitality in hosting him while
some of this paper was written up. The paper \cite{Picaud} was
a catalyst for discussions which led to the content of
this paper, and we are pleased to acknowledge it. Danny Calegari was supported
by NSF grant DMS 0707130.

\end{document}